\documentclass[twoside]{article}

\usepackage[accepted]{aistats2025}
%
%


\setlength{\pdfpageheight}{11in}
\setlength{\pdfpagewidth}{8.5in}

\usepackage[round]{natbib}



\usepackage[utf8]{inputenc} 
\usepackage[T1]{fontenc}    
\usepackage{hyperref}       
\usepackage{url}            
\usepackage{booktabs}       
\usepackage{amsfonts}       
\usepackage{nicefrac}       
\usepackage{microtype}      
\usepackage{xcolor}         

\usepackage{amsthm}
\usepackage{amsmath}
\usepackage{amssymb}
\usepackage{graphicx}
\usepackage{bbm}
\usepackage{braket,amsfonts,amssymb}
\usepackage[linesnumbered,ruled]{algorithm2e}
\usepackage{comment}
\usepackage{amsopn}
\usepackage{bm}
\usepackage{mathrsfs}  
\usepackage[mathscr]{euscript}
\usepackage{bm,nicefrac}
\usepackage{extarrows}
\usepackage{caption}
\usepackage{subcaption}
\usepackage{xcolor}

\newtheorem{theorem}{Theorem}[section]

\newtheorem{lemma}[theorem]{Lemma}
\newtheorem{definition}[theorem]{Definition}
\newtheorem{remark}[theorem]{Remark}

\newcommand\y[1]{\mathcal{#1}} 
\newcommand{\onenorm}[1]{\Vert #1 \Vert_1}

\let\cedil\c

\def\b{\mathbf{b}}
\def\bo{{\bf 1}}
\def\c{\mathbf c}
\def\cgd{c}

\def\etagd{\eta}

\def\j{j}

\def\n{n}
\def\nablaRiemann{\widetilde{\nabla}}

\def\k{k}

\def\s{s}
\def\t{t}
\def\u{{\bf u}}
\def\v{{\bf v}}
\def\w{{\bf w}}
\def\x{{\bf x}}

\newcommand{\ZETA}{\boldsymbol{\zeta}}
\def\y{{\bf b}}
\def\z{{\bf z}}

\def\we{w}

\def\xe{x}
\def\xprod{\tilde{\bf x}}
\def\xprode{\tilde{\xe}}

\def\xprodinfty{\tilde{\x}_\infty}

\def\halfstep{\pmb{\xi}}

\def\A{{\bf A}}

\def\F{F}

\def\L{L}
\def\LossBasic{\ell}
\def\LossPoly{\mathcal{L}}
\def\M{M}
\def\N{N}
\def\PC{\mathcal{P}_{\mathcal C}}
\def\Q{\mathbf{Q}}
\def\R{\mathbb{R}}
\def\T{\text{T}}

\def\0{\boldsymbol{0}}
\def\1{\boldsymbol{1}}

\def\Lover{\mathcal{L}_{\text{over}}}

\DeclareMathOperator*{\argmin}{arg\,min}


\renewcommand{\L}{L}

\newcommand{\diag}{\mathrm{diag}}
\renewcommand{\epsilon}{\varepsilon}

\newcommand{\norm}[2]{\left\| #1 \right\|_{#2}}

\begin{document}

%
\runningtitle{Get rid of your constraints and reparametrize}

%
\runningauthor{Chou, Maly, Verdun, Costa, and Mirandola}

\twocolumn[

\aistatstitle{\qquad Get rid of your constraints and reparametrize: \newline
A study in NNLS and implicit bias}

\aistatsauthor{ Hung-Hsu Chou\footnotemark \setcounter{footnote}{0} \And Johannes Maly\footnotemark \setcounter{footnote}{0} \And  Claudio Mayrink Verdun\footnotemark}

\aistatsaddress{ Department of Mathematics \\ University of Pittsburgh \\ USA  \And  Department of Mathematics, \\ LMU Munich, and Munich \\ Center for Machine Learning (MCML), \\ Germany \And John A. Paulson School of  \\ Engineering and Applied Sciences, \\ Harvard University \\ USA } 

\aistatsauthor{Bernardo Freitas Paulo da Costa \And Heudson Mirandola}

\aistatsaddress{ Escola de Matemática Aplicada \\ Funda\cedil cão Getulio Vargas \\ Brasil \And Instituto de Matemática \\ Universidade Federal do Rio de Janeiro \\ Brasil } 

]

\footnotetext{Equal contribution.}

\begin{abstract}
 Over the past years, there has been significant interest in understanding the implicit bias of gradient descent optimization and its connection to the generalization properties of overparametrized neural networks. Several works observed that when training linear diagonal networks on the square loss for regression tasks (which corresponds to overparametrized linear regression) gradient descent converges to special solutions, e.g., non-negative ones. We connect this observation to Riemannian optimization and view overparametrized GD with identical initialization as a Riemannian GD. We use this fact for solving non-negative least squares (NNLS), an important problem behind many techniques, e.g., non-negative matrix factorization. We show that gradient flow on the reparametrized objective converges globally to NNLS solutions, providing convergence rates also for its discretized counterpart. Unlike previous methods, we do not rely on the calculation of exponential maps or geodesics. We further show accelerated convergence using a second-order ODE, lending itself to accelerated descent methods. Finally, we establish the stability against negative perturbations and discuss generalization to other constrained optimization problems.
\end{abstract}

\section{INTRODUCTION}
\label{sec:Introduction}

In an attempt to understand the still mysterious generalization properties of unregularized
gradient-based learning procedures for deep neural networks \citep{belkin2019reconciling,zhang2021understanding}, a recent line of research has been focusing on examining the implicit bias of (stochastic) gradient descent (GD) on overparametrized models. The fundamental question is why the networks found by GD generalized so well despite being trained to a perfect interpolation regime. The mathematical analysis of {this} phenomenon was first considered in simplified problem instances like linear classification \citep{soudry2018implicit,ji2019implicit}, matrix factorization \citep{gunasekar2017implicit,arora2019implicit,chou2020implicit}, matrix recovery \citep{geyer2019implicit,stoger2021small}, two-layer networks in the mean-field regime \citep{chizat2020implicit} and linear regression \citep{vaskevicius2019implicit,zhao2019implicit,chou2021more}. 
{In such settings, GD and its underlying gradient flow provably converge to particular solutions, e.g., parsimoniously structured ones. Many of the theoretical contributions are restricted to deep linear networks} \citep{Bach2019implicit,woodworth2020kernel,pesme2021implicit} under the so-called \emph{equal} or \emph{balanced} initialization \citep{hardt2016identity,arora2018optimization,azulay2021implicit,elkabetz2021continuous,li2023simplex}. {In this case, the overparametrization is equivalent to a reparametrization of the GD dynamics; cf. Remark \ref{remark2.2}} below. Moreover, the study of overparametrized models led to an investigation of constrained optimization problems. In the same way that the bias of GD has been shown to induce parsimonious solutions under certain assumptions, it is natural to ask if, for certain reparametrizations, the limit point found via GD could belong to a (convex) set with a certain geometry. As is folklore in the literature, the Riemannian interpretation of such reparametrization allows us to naturally express the geometric structure of the problem, in case it has constraints \citep{martinez2023accelerated}. However, to our knowledge, even in the overparametrized linear regression case, these contributions have only analyzed the continuous dynamics \citep{li2022implicit} or second-order
optimality conditions of such overparametrized objective functions \citep{ding2023squared}.

{In this paper, we take inspiration from the rich body of work on the implicit bias of GD to analyze the performance of reparametrized GD on constrained optimization problems and propose accelerated GD variants for such problems. We use the so-called \emph{non-negative least-squares (NNLS)} problem as a popular representative. NNLS can be formulated in a rigorous way as follows: given} a linear operator $\A \in \R^{\M\times \N}$ and data ${\b} \in \R^\M$, one seeks for any solution
\begin{align} \label{eq:NNLS}
    \x_+ \in S_+ := \argmin_{\z \ge \0} \; {\LossBasic(\z)},
    \tag{NNLS}
\end{align} 
{where the least squares loss is defined as
\begin{equation}\label{eq:LossBasic}
    \LossBasic \colon \mathbb R^\N \to [0,\infty),
    \qquad
    \LossBasic(\z) := \frac{1}{2}\|\A\z -\b \|_2^2.
\end{equation} }
Since physical quantities of interest often are positive by nature, e.g., in deconvolution and demixing problems like source separation, it is common to search for a least-squared solution under additional non-negativity constraints by solving \eqref{eq:NNLS}. Large scale applications in which \eqref{eq:NNLS} appears include NMR relaxometry \citep{stoch2021enhanced}, imaging deblurring \citep{benvenuto2009nonnegative}, biometric templates  \citep{kho2019cancelable}, transcriptomics \citep{kim2011isoformex}, magnetic microscopy \citep{myre2019using}, sparse hyperspectral unmixing \citep{esser2013method}, and system identification \citep{chen2011nonnegative}. The formulation in \eqref{eq:NNLS} is also closely related to non-negative matrix factorization \citep{chu2021alternating} and supervised learning methods such as support vector machines \citep{vapnik1999nature}. We refer the interested reader to \cite{chen2010nonnegativity} for a survey about the development of algorithms that enforce non-negativity. 
In comparison with standard least squares, the positivity constraint, however, imposes additional difficulties. By now, three main algorithmic approaches exist: (i) interior point methods, (ii) active set methods, and (iii) projected gradient methods. In (i), one uses that \eqref{eq:NNLS} can be recast as the quadratic problem 
\begin{align} \label{eq:QuadraticFormulation}
    \argmin_{\x \geq \0} \frac{1}{2} \langle \x, \Q\x \rangle + \langle \c,\x\rangle,
\end{align}
where $\Q=\A^\top \A$ and $\c=-\A^\top \y$, and then be solved via interior point methods. For $\M=\N$, these are guaranteed to converge to an $\varepsilon$-solution in $\mathcal{O}(\N^3 \ln \varepsilon^{-1}) $ time \citep{bellavia2006interior}. The (ii) active set methods \citep{lawson1995solving} represent the most commonly used solution to \eqref{eq:NNLS}. They exploit the fact that $\x_+$ in \eqref{eq:NNLS} can be found by solving an unconstrained problem with \emph{inactive variables} that do not contribute to the constraints. Both (i) and (ii) require solving linear systems at each iteration, which limits their scalability. In contrast, (iii) projected gradient methods like projected gradient descent (PGD) only require matrix-vector multiplications, and the projection to the positive orthant can be trivially performed \citep[e.g.,][]{bierlaire1991iterative,lin2007projected, kim2013non}. Nevertheless, the choice of step size is challenging for such methods. Even though PGD is guaranteed to converge with the stepsize given by the inverse of the Lipschitz constant, this implies very slow convergence for ill-conditioned problems. When using stepsize acceleration methods in such scenarios, e.g., the Barzilai-Borwein step-size, one, however, encounters cases in which PGD exhibits cyclic behavior and fails to converge for a choice that provably works for unconstrained gradient descent
\citep{dai2005projected}. {For additional literature on implicit regularization of GD and NNLS we refer the reader to Appendix \ref{sec:RelatedWork}.}

{Instead of solving a constrained problem, we approach NNLS by considering GD and its underlying flow} on the \emph{reparametrized least squares loss} given by
\begin{equation}
\label{eq:LossPoly}
    \LossPoly \colon \mathbb R^\N \to [0,\infty),
    \qquad
    \LossPoly(\x) := \frac{1}{2\L}\|\A\x^{\odot L} - {\b} \|_2^2.
\end{equation}
where $\odot$ denotes the entry-wise (Hadamard) power; {the additional $\tfrac{1}{L}$-factor is added for convenience and does not influence the set of minimizers}. Therefore, we are going to analyze the flow $$\x'(t) = -\nabla \LossPoly(\x(\t)),$$ with $\x(0) = \x_0$ for a given $\x_0 \geq 0$, as well as its discrete counterpart, i.e., we will forget the non-negative constraints and analyze the behavior of the unconstrained non-convex flow. (In the following, we will often abbreviate $\xprod := \x^{\odot L}$ and use that $\LossPoly(\x) = \tfrac{1}{L} \LossBasic(\xprod)$.) Despite its simplicity, the {loss in \eqref{eq:LossPoly} --- also called \emph{Hadamard reparametrized least squares}} --- has been studied in various works \citep{arora2018optimization,gunasekar2019implicit,vaskevicius2019implicit,zhao2019implicit,chou2021more} because it is believed to be the first step in developing theoretical guarantees for models with deep structure.

\begin{remark}\label{remark2.2}
{The reparametrized loss in \eqref{eq:LossPoly} is related to the overparametrized loss $\Lover \colon \R^\N \times \cdots \times \R^\N \to [0,\infty)$ with}
\begin{align} \label{eq:Lover}
     \Lover\big(\x^{(1)}, \dots, \x^{(L)}\big)
    := \frac{1}{2}\,\Big\| \A \big( \x^{(1)} \odot \cdots \odot \x^{(L)} \big) - {\b}   \Big\|_2^2,
\end{align}
which appeared before in the context of implicit $\ell_1$-regularization of GD \citep{vaskevicius2019implicit,li2021implicit,chou2021more}.\footnote{{In fact, minimizing \eqref{eq:Lover} corresponds to training an $L$-layer linear network with diagonal weight matrices that are fully characterized by their diagonals $\x^{(\ell)} \in \R^\N$, cf.\ \cite{nacson2022implicit}.}}  In these works, it was shown that \underline{if there exists} a non-negative solution $\x \ge \0$ with $\A\x = \y$, and if all factors $\x^{(\k)}$ are initialized with $\alpha \bo$ at $\t=0$, for $\alpha > 0$ sufficiently small, {and follow gradient flow with respect to $\Lover$, then their product} $\x^{(1)}(\t) \odot \cdots \odot \x^{(L)}(\t)$ approximates an $\ell_1$-minimizer among all positive solutions, for $\t \to \infty$. 
A key observation by \cite{vaskevicius2019implicit,li2021implicit,chou2021more} is that if all factors $\x^{(k)}$ are initialized by the same vector $\x_0$, then gradient flow on \eqref{eq:Lover} will preserve this equality, and the trajectory of any $\x^{(k)}$ is equal to the trajectory of the gradient flow trajectory $\x$ on \eqref{eq:LossPoly} \cite[e.g.,][Appendix A.1]{chou2021more}.
\end{remark}

{In light of Remark \ref{remark2.2},} we will restrict ourselves to reparametrized gradient flow/descent on \eqref{eq:LossPoly}. Any resulting statement automatically applies to gradient flow/descent on \eqref{eq:Lover} under identical initialization of all factors $\x^{(k)}$.
{Let us highlight two points: first,} whereas in the works of \cite{vaskevicius2019implicit,li2021implicit,chou2021more} the positivity of the limit was a technical nuisance that needed to be circumvented by adapting \eqref{eq:Lover}, the present work makes use of it to restrict the flow to a constrained set and to solve \eqref{eq:NNLS}. 
Such a connection between overparameterization in ML and classical optimization problems opens up new avenues for algorithm design.
{Second, the fact that our results only apply to \eqref{eq:Lover} with} identical initialization, i.e., $\x^{(1)}(0) = \cdots = \x^{(L)}(0) = \x_0$ is not restrictive when solving \eqref{eq:NNLS}. In Appendix \ref{sec:ProofRemark1}, we argue that all solutions of \eqref{eq:NNLS} are stationary points of \eqref{eq:Lover} that can be described as the limit of gradient flow on \eqref{eq:Lover} under suitably chosen \emph{identical} initialization. We now present our main contributions, which demonstrate how constraints can be effectively traded for increased complexity in the objective function. This approach allows us to solve NNLS using unconstrained optimization techniques, leading to a scalable algorithm.
 
\subsection{Contribution and Outline} 

In light of the connection {between the implicit regularization of GD on \eqref{eq:LossPoly} and finding non-negative solutions in \eqref{eq:NNLS}, as discussed in Remark \ref{remark2.2}}, we analyze {in this work} the continuous and the discrete dynamics of the reparameterized loss $\LossPoly$ from equation~\eqref{eq:LossPoly} and show that:

\begin{itemize}
    \item The unconstrained gradient flow on \eqref{eq:LossPoly}
    globally converges to solutions of \eqref{eq:NNLS} at an $\mathcal{O}(1/\t)$ {decay rate on $\LossBasic$, cf.\ Theorem \ref{theorem:L1_equivalence_positive}. Whereas a similar result can be obtained via \cite{li2022implicit}, our presented proof does not directly rely on tools and concepts from Riemannian geometry, cf.\ Remark \ref{rem:Riemannian}.}.
    \item An accelerated version of the gradient flow on the original coordinates leads to an $\mathcal{O}(1/\t^2)$ {decay rate on $\LossBasic$, cf.\ Theorem \ref{thm:Accelerated}}. On numerical examples, this scheme attains an $\mathcal{O}(1/k^{3})$ decay. Surprisingly, accelerating the \emph{reparameterized gradient descent} (that is, for $\LossPoly(\x)$) yields even faster convergence, {cf.\ Section \ref{sec:accel}}.
    \item Gradient descent on \eqref{eq:LossPoly} {with decaying step-size schemes} exhibits {on $\LossBasic$ a decay rate of $\mathcal{O}(1/k^\gamma)$, where $0<\gamma < 1$ depends on the decay rate of the step-size, cf.\ Theorem \ref{thm:GD_ConvergenceRate_Improved}}. Such a result is stronger than the one given by mirror descent arguments on general convex loss function which, in turn, are similar to subgradient methods with decay rate $\mathcal{O}(1/\sqrt{k})$.\footnote{Note that mirror descent with 1-strongly convex Legendre function provably reaches $\mathcal{O}(1/k)$ \citep{lecture_notes}. However, this assumption is not fulfilled by the reparametrization in \eqref{eq:LossPoly}, which, to the best of our knowledge, is the only one considered in recent literature on Hadamard overparametrization.}
    Our numerical simulations suggest that our bound matches the practical performance of GD, {cf.\ Section \ref{sec:NumericsStepDecay}}.
\end{itemize}
A nice by-product of our approach is that, by choosing the initialization of gradient flow close to zero, one can add an additional $\ell_1$-bias on top of non-negativity. This latter feature is inherited from previous works like \cite{vaskevicius2019implicit,chou2021more} and is only of interest in the special case of applying NNLS to sparse recovery, {cf.\ Section \ref{sec:NNLSforSparseRecovery}}.

{The main body of the paper is structured as follows: In Section \ref{sec:MainResults}, we present our theoretical contributions in detail. In Section \ref{sec:Numerics}, we evaluate our methods in numerical experiments. We conclude in Section \ref{sec:Conclusion} with a discussion of the results.} All proofs and additional numerical experiments are provided in the appendix.

\subsection{Notation} 

Before detailing our results, we need to introduce some additional notation.
{For any solution $\x_+ \in S_+$ as defined in \eqref{eq:NNLS}, we define $\b_+ := \A\x_+$ which satisfies 
\begin{align} \label{eq:CPlus}
    \b_+ = \mathcal P_{C_+} \b,
    \text{ for }
    C_+ := \{ \A\z \colon \z \in \R_{\ge 0} \} \subset \R^\M.
\end{align}
Since $C_+$ is convex and closed, $\b_+$ is the unique Euclidean projection and thus independent of the choice of $\x_+$. Note that $\mathbb{R}_{\geq 0}^{\N} = \{\x\in\mathbb{R}^\N: \x \ge \0 \}$ and $\mathbb{R}_{> 0}^{\N} = \{\x\in\mathbb{R}^\N: \x > \0 \}$ where, for convenience, we denote by $\x \ge \bf{y}$ the entry-wise bound $\xe_n \ge y_n$, for all $n$, and the all zero and all ones vector by $\0$ and $\1$ respectively.
We denote $\ell_p$-vector norms by $\| \cdot \|_p$ and the operator norm of matrices by $\| \cdot \|$.
We use $\odot$ to denote the Hadamard product, i.e., the vectors $\x \odot \bf{y}$ and $\x^{\odot p}$ have entries $(\x\odot \mathbf{y})_n = x_n y_n$ and $(\x^{\odot p})_n = \xe_n^p$, respectively. Univariate functions like the logarithm are applied entry-wise to vectors, i.e., $\log(\x) \in \R^\N$ with $\log(\x)_n = \log(\xe_n)$. 
We denote the complement of a set $I$ by $I^c$ and its cardinality by $|I|$.} The support of a vector $\x \in \mathbb{R}^\N$ is the index set of its nonzero entries and denoted by $ \text{supp}(\x)=\{j \in [\N]: x_j \neq 0\}$. We call a vector $s$-sparse if $|\text{supp}(\x)| \le s$. We denote by $\x_I \in \R^\N$ the projection of $\x \in \R^\N$ onto the coordinates indexed by $I$. Furthermore, $\sigma_s(\x)_{\ell_1}$ denotes the $\ell_1$\emph{-error of  the best $s$-term approximation} of $\x \in \mathbb{R}^\N$, i.e., $\sigma_s(\x)_{\ell_1} = \inf \{\Vert \x-\z \Vert_1 \colon \z \in \mathbb{R}^\N \ \text{is $s$-sparse} \}$.

\section{THEORETICAL RESULTS}
\label{sec:MainResults}

{The main theoretical results of this paper show (i) that  reparametrized gradient flow on \eqref{eq:LossPoly} converges to solutions of \eqref{eq:NNLS} with decay rate $\mathcal{O}(1/t)$, see Section \ref{sec:ConvergenceGF}, (ii) that an accelerated version of the flow improves this rate to $\mathcal{O}(1/t^2)$, see Section \ref{sec:ConvergenceAcceleratedGF}, (iii) that GD on \eqref{eq:LossPoly} with decaying stepsize reaches a decay rate of $\mathcal{O}(1/k^\gamma)$ where $0<\gamma < 1$ depends on the decay rate of the step-size, see Section \ref{sec:ConvergenceGD}, and (iv) that our approach for solving NNLS comes with certain advantages compared to other NNLS methods when used for sparse recovery, see Section \ref{sec:NNLSforSparseRecovery}.}\footnote{{We emphasize at this point that, for a certain class of matrices $\A$, \eqref{eq:NNLS} is a valid approach to identifying sparse solutions from $\b$, cf.\ our extended discussion in Appendix \ref{sec:NNLS_sparse}.}}

\subsection{Convergence rate of gradient flow}
\label{sec:ConvergenceGF}

Our first contribution is to extend the argument of \cite{chou2021more} and to show, for any $\A$, $\y$, and positive (identical) initialization $\x(0) = \x_0 > \0$, that the Hadamard power $\xprod = \x^{\odot L}$ of the gradient flow $\x$ on \eqref{eq:LossPoly} converges to a solution $\x_+$ of \eqref{eq:NNLS}. \emph{A crucial point here is that, in contrast to theoretical results by \cite{chou2021more}, the existence of a solution $\A\x = \y$ is not required anymore.} In addition, we characterize the {decay rate of the loss} as $\mathcal O (1/\t)$. 

\begin{theorem} \label{theorem:L1_equivalence_positive}
    Let $\L\geq 2$, $\A\in\mathbb{R}^{\M\times\N}$ and $\y\in\mathbb{R}^{\M}$. Let $\x_0 > \0$ be fixed and let $\x(\t)$ follow the flow $\x'(t) = -\nabla \LossPoly(\x(\t))$ with $\x(0) = \x_0$.
    Let $S_+$ be the set defined in \eqref{eq:NNLS} and let $\xprod = \x^{\odot L}$.
    
    Then the limit $\xprodinfty:= \lim_{\t\to\infty} \xprod(\t)$ exists and lies in $S_+$. Also, there exists an absolute constant $C > 0$ that only depends on the choice of $\A$, $\y$, and $\x_0$ such that 
    \begin{align*}
        {\LossBasic(\xprod(\t)) - \LossBasic(\x_+) \le \frac{C}{\t},}
    \end{align*}
    for any $\t > 0$ {and any $\x_+ \in S_+$}. 
\end{theorem}

{
\begin{proof}[Proof sketch]
    By using properties of orthogonal projections, see Lemma \ref{lem:ConvexProjection}, existence of $\xprodinfty \in S_+$ can be obtained by slight modifications of the argument by \cite{chou2021more}.  Interestingly, we can streamline the settings $L=2$ and $L>2$ by relating the potential of \cite{chou2021more} to the \emph{Tsallis q-logarithm} \citep{tsallis1988possible}. For deducing the convergence rate, we introduce a Lyapunov argument that has not appeared in this context before to the best of our knowledge.
    The detailed proof of Theorem \ref{theorem:L1_equivalence_positive} is given in Appendix~\ref{sec:MainProofs}.
\end{proof}
\begin{remark}
    Note that non-negativity of the gradient flow trajectory follows from $\x_0 > 0$ and Picard-Lindelöf. Indeed, the Hadamard reparametrization induces $[\nabla \LossPoly(\x)]_i = 0$ for $x_i = 0$, see \eqref{eq:dynamicsBregman_positive}. If $x(t_*)_i = 0$, for $t_* > 0$, this would give rise to an alternative trajectory with $(x_0)_i = 0$ contradicting the local uniqueness of the flow around $t_*$. 
\end{remark}
}

\subsection{Convergence rate of accelerated reparametrized flow}
\label{sec:ConvergenceAcceleratedGF}

{In order to accelerate the reparametrized flow $\x'(t) = -\nabla \LossPoly(\x(\t))$ on $\eqref{eq:LossPoly}$, we note that} by setting $q = 2-\tfrac{2}{L} \in [1,2)$ and {recalling $\xprod = \x^{\odot L}$}, the reparametrized flow induces a Riemannian metric $\langle \u,\v \rangle_{\xprod} = \langle \xprod^{\odot (-q)}\odot \u,\v\rangle$, such that the corresponding Riemannian gradient of $\ell$ is given by
\begin{equation}\label{eq:gradRiemann}
    \nablaRiemann \LossBasic(\xprod)
    := \xprod^{\odot q} \odot \nabla \LossBasic(\xprod)
\end{equation}
{and we can replace the reparametrized flow on $\LossPoly$ by the Riemannian flow $\xprod'(\t) = - \L\nablaRiemann \LossBasic(\xprod)$ on $\LossBasic$.} 
A detailed derivation is provided in Appendix \ref{sec:Riemannian}.

\begin{remark}
\label{rem:Riemannian}
    {Interestingly, by using this Riemannian viewpoint, Theorem \ref{theorem:L1_equivalence_positive} can be derived from the results of \cite{li2022implicit}. At the same time, the proof we presented in Section \ref{sec:MainProofs} does not directly rely on tools and concepts from Riemannian geometry but leads to the same result.}
\end{remark}

Inspired by the Riemannian representation of the flow, we show significant improvement in the convergence rate of the flow by using a $2\textsuperscript{\textit{nd}}$-order ODE for \eqref{eq:LossPoly}. 
\begin{theorem}
\label{thm:Accelerated}
    Let $\L\geq 2$, $\A\in\mathbb{R}^{\M\times\N}$, $\y\in\mathbb{R}^{\M}$, and $\x_0 > \0$ be fixed.
    Moreover, let $\halfstep(\t) \in \mathbb{R}^N $ be a vector such that $(\halfstep(\t),\x(\t))$ satisfies the following dynamics
    \begin{equation}
    \label{accelerated-gd-continuous}
    \begin{split}
    \halfstep'(t) &= -\frac{t}{2} \halfstep(t)^{\odot q} \odot \nabla \LossBasic(\xprod(t)) \nonumber,\\
    \xprod'(t) &= \frac{2}{t}{\big(\halfstep(t) - \xprod(t) \big)},
    \end{split}
    \tag{ACC}
    \end{equation} 
with $\halfstep(0)=\xprod(0)=\xprod_0> \0$, where $q = 2-\frac{2}{L} {\in [1,2)}$ and $\LossBasic$ has been defined in \eqref{eq:LossBasic}.
Then, there exists an absolute constant $C > 0$ that only depends on {$q$ and the distance between $\xprod_0$ and $\x_+$} such that
    \begin{align*}
        {\LossBasic(\xprod(t))-\LossBasic(\x_+) \le \frac{C}{\t^2},}
    \end{align*}
    for any $\t > 0$ {and any $\x_+ \in S_+$ defined in \eqref{eq:NNLS}}. 
\end{theorem}

{
\begin{proof}[Proof sketch]
    The accelerated convergence rate is shown by a similar Lyapunov argument as in Theorem \ref{theorem:L1_equivalence_positive} using a higher-order Lyapunov function. The specific shape of \eqref{accelerated-gd-continuous} has been derived by reverse engineering the Lyapunov analysis of Theorem \ref{theorem:L1_equivalence_positive}.
    The full derivation of the result is given in Appendix \ref{sec:acc_proof}.
\end{proof}
}

\begin{remark}
     {Note that~\eqref{accelerated-gd-continuous} differs from a naive application of Nesterov's scheme to \eqref{eq:gradRiemann} since $\nabla \LossBasic$ is evaluated at $\xprod$ and not on $\halfstep$.}
\end{remark}

Our numerical simulations {in Section \ref{sec:accel}} show promising results when discretizing~\eqref{accelerated-gd-continuous}, even exceeding the predicted error decay of Theorem~\ref{thm:Accelerated}.

\subsection{Convergence rate of gradient descent}
\label{sec:ConvergenceGD}

We now proceed with a convergence rate analysis of reparametrized gradient descent on \eqref{eq:LossPoly}, {i.e., a discrete counterpart of Theorem \ref{theorem:L1_equivalence_positive}}. {Since our proof technique relies on a discretization of the Riemannian flow introduced in \eqref{eq:gradRiemann}, we directly work with GD defined on the products $\xprod = \x^{\odot L}$ and $\widetilde\nabla \LossBasic$ defined in \eqref{eq:gradRiemann}.}

\begin{theorem}
\label{thm:GD_ConvergenceRate_Improved}
Let ${\x_+} \in S_+$ where $S_+$ has been defined in \eqref{eq:NNLS}, and choose any $\xprod_0 > \0$ as initialization for the sequence $\xprod_k$ produced by
\begin{equation}\label{eq:discrete-gd}
    \xprod_{k+1}
    =\xprod_k - {\etagd_k\nablaRiemann \LossBasic(\xprod_k)}
\end{equation}
Fix $\delta > 0$, $q \in [1,2)$, $\gamma \in (0,1)$, { and 
\begin{align*}
    0 < \cgd \le \min \left\{ \frac{1}{\| \A^T \y \|_2^2 {(60 \mathfrak{C})}^{q}}, \frac{1}{\| \A^T\A \|_2 {(60 \mathfrak{C})}^{3q}} \right\}
\end{align*}
such that $c \in (0,1)$.} The constant {$\mathfrak{C}$ satisfies $1 \le \mathfrak{C} \le D_F({\x_+},\xprod_0) + c_{\max} + \| \x_+\|_\infty$ where $D_F$ denotes the Bregman divergence with respect to the \emph{Tsallis q-logarithm}, cf.\ Definition \ref{def:Bregman_Divergence} and \eqref{eq:Bregman2}, and} $c_{\max}$ is defined as

\begin{align*}
    c_{\max} :=
    \begin{cases}
        \zeta(2\gamma) & \text{for } \gamma > \frac{1}{2} \\
        2 \left( \log\big( \frac{2}{(2-q) \delta} \big) + (2-q)\delta \right) & \text{for } \gamma = \frac{1}{2} \\
        \frac{1}{1-2\gamma} \Big( \frac{2}{(2-q)\delta} \Big)^{\frac{1}{\gamma}-2} + 2 (2-q)\delta & \text{for } \gamma < \frac{1}{2}
    \end{cases},
\end{align*}
where $\zeta$ denotes the Riemann zeta function.

{Then, if} $\etagd_k := \frac{\cgd}{(k+2)^\gamma} $, we obtain the convergence rate
\begin{align*}
    \ell(\xprod_k) - \ell({\x_+})
    \le \frac{2 (2-q)^{-1} \cgd^{-1}}{k^{1-\gamma}} (  D_F({\x_+}, \xprod_0) + c_{\max} )
\end{align*}
for any $0 \le k < K$ where
\begin{align*}
    K := \min \{k \in \mathbb N \colon 
{2(\sqrt{\LossBasic(\xprod_k)} - \sqrt{\LossBasic(\x_+)})} < \sqrt{\delta} \}.
\end{align*}
In addition, we have $\xprod_k \in \R_{>0}^\N$ with $\| \xprod_k \|_2 \le \mathfrak{C}$. 
\end{theorem}

{
\begin{proof}[Proof sketch]
    To deduce the convergence rate, we carefully discretize the Lyapunov argument from Theorem~\ref{theorem:L1_equivalence_positive} using Taylor's theorem. This results in an intricate inductive argument, which simultaneously controls three quantities: (i) the $\ell_2$-norms of the iterates, (ii) the entry-wise signs of the iterates, and (iii) the decay in the Lyapunov function.
    The detailed proof of Theorem \ref{thm:GD_ConvergenceRate_Improved} is in Appendix \ref{sec:gd_proof}.
\end{proof}
}

\begin{remark} 
    Note that for quickly decaying stepsizes, i.e., $\gamma > \frac{1}{2}$, we may set $\delta=0$ in Theorem \ref{thm:GD_ConvergenceRate_Improved} which yields {$K=\infty$ and thus} convergence of the sequence of iterates to global optimality. For larger stepsizes, the result only controls the decay rate of the iterates until $\LossBasic(\xprod_k) - \LossBasic({\x_+}) = \mathcal O ({\sqrt{\delta}})$, i.e., a neighborhood of $S_+$ is reached. {Moreover, Figure \ref{fig:GammaComparison} illustrates that the predicted rate of $\mathcal{O}(1/k^\gamma)$ also holds for constant stepsize, i.e., $\gamma = 0$. Our restriction to $\gamma \in (0,1)$ and the neighborhood restrictions are probably artifacts of our proof technique.}
\end{remark}

{We leave it as an open question for future work to analyze a discrete analog of the accelerated flow in~\eqref{accelerated-gd-continuous}}.

\subsection{NNLS for sparse recovery}
\label{sec:NNLSforSparseRecovery}

It is known in the field of sparse recovery that under certain assumptions on the measurement operator $\A$, solving \eqref{eq:NNLS} promotes sparsity of its solution without any hyper-parameter tuning and comes with improved robustness. This observation dates back to the seminal works of \cite{donoho2005sparse,donoho2010counting}. Therefore, it is expected that the same would happen for GD applied to the overparametrized function in \eqref{eq:LossPoly}. As a nice by-product of reusing existing theory in our proofs, we observe that if $\x_0$ is chosen sufficiently close to zero, the limit of gradient flow (approximately) minimizes the $\ell_1$-norm among all possible solutions of \eqref{eq:NNLS}, which has previously been observed by \cite{chou2021more}. {Theorem \ref{theorem:L1_equivalence_positive_merged} in Appendix \ref{sec:MainProofs} provides the detailed relation between initialization scale and implicit $\ell_1$-regularization. The precise statements on how to use this for sparse recovery} are given in Appendix \ref{sec:NNLS_sparse}. We will only treat the result informally here. According to Theorem \ref{theorem:L1_equivalence_positive_PartII},
\begin{align*}
    \|\xprodinfty\|_1 - \min_{\z \in S_+} \|\z\|_1 \leq \epsilon,
\end{align*}
where $\epsilon$ decreases as the initialization scale decreases. If we further assume that $\A \in \R^{\N \times \M}$ has the $\ell_2$-robust null space property (Definition \ref{def:NSP:statement}) and $\ell_1$-quotient property (Definition \ref{def:l1_quotient}), then we can stably recover sparse ground truths (Theorem \ref{theorem:robustness}).

Theorem \ref{theorem:robustness} implies that our approach to solving \eqref{eq:NNLS} is stable with respect to negative entries of the ground truth, i.e., the reconstruction error depends on the sparsity of the positive part $\x_+$ of $\x_*$ and the magnitude of the negative part $\x_-$ of $\x_*$. The experiments we perform in Section \ref{sec:NumericsStability} suggest that the established solvers for \eqref{eq:NNLS} are less stable under such perturbations.

{We finally mention that if one is using NNLS for sparse recovery, then an alternative approach that leverages Hadamard overparametrization is the one of \cite{poon2023smooth} in which the authors smoothen an $\ell_1$-regularized objective by Hadamard overparametrization combined with explicit $\ell_2$-regularization. Due to the explicit use of regularization terms, however, this is different from the implicit bias setting analyzed in this work and clearly designed for sparse recovery.}

\section{NUMERICAL EXPERIMENTS}
\label{sec:Numerics}

Let us finally turn to a numerical evaluation of our theoretical insights.\footnote{Our code is available at \url{https://github.com/claudioverdun/NNLS_AISTATS_2025}.}
We compare the following six methods for solving NNLS here:

\begin{itemize}
    \item \textbf{GD-$n$L:} Vanilla gradient descent applied to $\mathcal L$ in \eqref{eq:LossPoly} with $n$ layers, for $n \in \mathbb N$. This is the discretized version of the gradient flow we considered in Section \ref{sec:MainResults}. As initialization we use $\alpha \bo$, for $\alpha > 0$.\footnote{Since all factors $\x^{(k)}$ stay identical over time, cf.\ Remark \ref{remark2.2}, one only needs to store $\x^{(1)}$ and there is no memory overhead of \textbf{GD-$n$L} in comparison to GD on \eqref{eq:NNLS}.}
    \item \textbf{SGD-$n$L:} Stochastic gradient descent applied to $\mathcal L$ in \eqref{eq:LossPoly} with $n$ layers, for $n \in \mathbb N$. A probabilistic variant of \textbf{GD-$n$L}. In the experiments we use $\M/10$ as batch-size for \textbf{SGD-$n$L} and initialize with $\alpha \bo$, for $\alpha > 0$.
    \item \textbf{CVX-NNLS:} Solving the quadratic formulation of  \eqref{eq:NNLS} described in \eqref{eq:QuadraticFormulation} via ADMM. We use the Python-embedded modeling language CVX \footnote{\url{https://www.cvxpy.org/}} which, in turn, uses the solver OSQP \footnote{\url{https://osqp.org/docs/solver/index.html}} for NNLS.
    \item \textbf{LH-NNLS:} The standard Python NNLS-solver \emph{scipy.optimize.nnls}, which is an active set method and is based on the original Lawson-Hanson method \citep{lawson1995solving}. It is not scalable to high dimensions since it requires solving linear systems in each iteration. (An accelerated version of \textbf{LH-NNLS} is provided by \cite{bro1997fast}. Since both methods produce the same outcome in our experiments, we only provide the results for \textbf{LH-NNLS}.)
    \item \textbf{TNT-NN:} An alternative but more recent active set method that heuristically works well and dramatically improves over \textbf{LH-NNLS} in performance \citep{myre2017tnt}. We used the recent Python implementation available at \url{https://github.com/gdcs92/pytntnn}.
    \item \textbf{PGD}: Projected gradient descent for NNLS as described by \cite{polyak2015projected}.
\end{itemize}

We {begin by examining} the convergence rates of \textbf{GD-$n$L} and \textbf{SGD-$n$L} for various choices of step size, including constant step-size and Nesterov acceleration.
We also compare it to the Barzilai-Borwein (BB) stepsize rule due to its popularity and its numerical efficiency.
Next, we analyze gradient acceleration corresponding to the differential equation~\eqref{accelerated-gd-continuous}, with $L=2$, and illustrate its performance in comparison to the discretized accelerated gradient method for the dynamics in the original variable $\x'(t) = - \nabla \LossPoly(\x)$.
We proceed to the analysis of stepsize decay rates, as required by Theorem~\ref{thm:GD_ConvergenceRate_Improved}.
We then investigate the role of initialization and the number of layers on the implicit sparsity retrieval behavior of \textbf{GD-$n$L}.
Finally, we compare the stability of the proposed overparametrized methods against those aforementioned established methods when recovering perturbed signals that are not strictly non-negative. Additional large-scale experiments are provided in Appendix \ref{sec:AdditionalNumerics}.

\subsection{Different stepsizes}
\label{sec:NumericsStepsize}

In the first experiment, we compare the convergence rates of \textbf{GD-$n$L} and \textbf{SGD-$n$L} for various choices of step size.
Apart from using a constant step-size $\eta = 10^{-2}$, we also consider Nesterov acceleration \citep{nesterov1983method} and the BB stepsize rules \citep{fletcher2005barzilai,raydan1997barzilai},
and start all methods from $\x = \alpha \cdot \bo$, with $\alpha = 10^{-2}$.
Figure~\ref{fig:compare} shows the decay in training error, i.e., objective value over time in two different settings: the dense case, i.e., we have a squared system with $\M = \N = 50$ and a dense ground-truth, and the sparse case, i.e., we have an underdetermined system with $\M = 30$, $\N = 50$, and a $3$-sparse ground-truth.
In both settings, $\A$ has Gaussian entries.
As Figures \ref{fig:compareDense} and \ref{fig:compareSparse} show, advanced choices of step size notably improve the convergence rate of the gradient methods.
Moreover, \textbf{GD-$n$L} seems to profit more from the acceleration than \textbf{SGD-$n$L}. 

\begin{figure}[ht]
    \centering
    \begin{subfigure}[b]{0.45\textwidth}
        \centering
        \includegraphics[width=.9\textwidth]{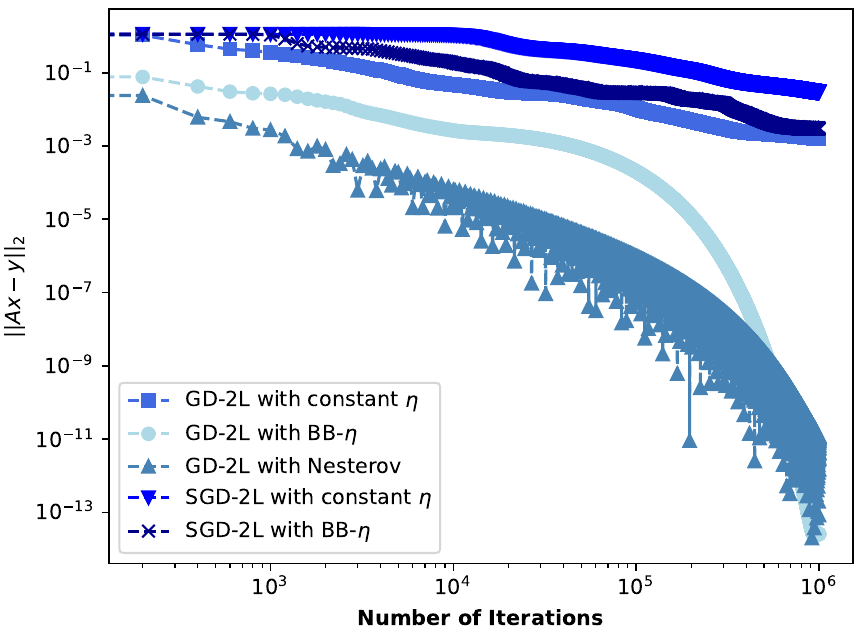}
        \subcaption{Dense case}
        \label{fig:compareDense}
    \end{subfigure}
    \quad
    \begin{subfigure}[b]{0.45\textwidth}
        \centering
        \includegraphics[width=.9\textwidth]{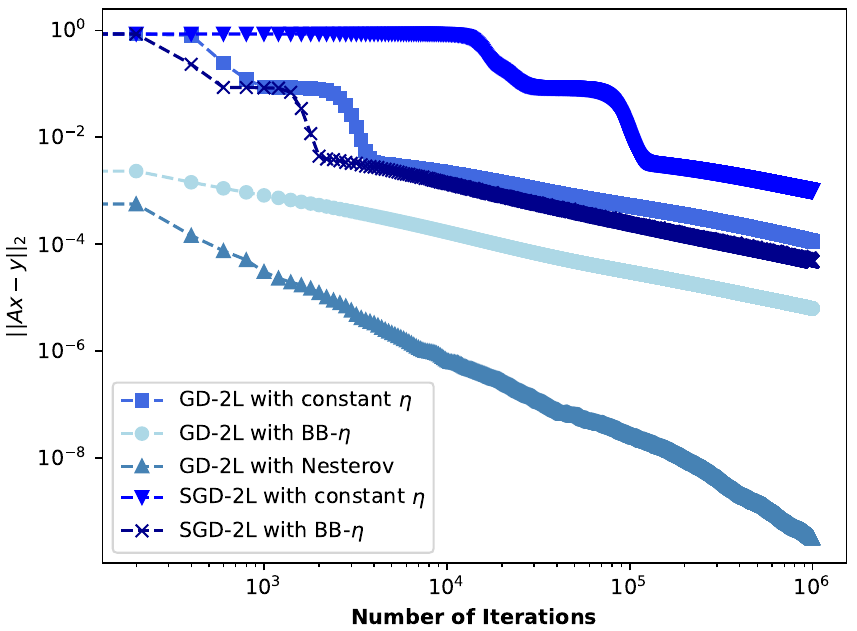}
        \subcaption{Sparse case}
        \label{fig:compareSparse}
    \end{subfigure}
    \caption{Convergence rate for various choices of step-size, see Section \ref{sec:NumericsStepsize}.}
    \label{fig:compare}
\end{figure}

\subsection{Acceleration}
\label{sec:accel}

In our next experiment, we analyze gradient acceleration corresponding to the differential equation~\eqref{accelerated-gd-continuous}, with $L=2$, and a $20\times50$ linear system recovering a $3$-sparse ground truth.
We first solve this equation with an adaptive Runge-Kutta method~\citep{dormand1980rk}, to evaluate its convergence rate, which turn out to be faster than predicted, following an $\mathcal{O}(\t^{-3})$ regime, as observed in Figure~\ref{fig:AcceleratedODE}.
The ODE was started at $\xprod_0 = 10^{-3} \cdot \1$.

Then, we compare two accelerated versions of gradient descent, the first coming from the Riemannian interpretation of equation~\eqref{accelerated-gd-continuous}, in the variables $\xprod$; and a second from Nesterov's scheme directly in the variables $\x$.
We start both methods from the same point as the ODE, and use $\eta = 0.1$;
detailed step rules are provided in Appendix~\ref{sec:AccelerationDiscretization}.
In Figure~\ref{fig:AcceleratedGD}, we observe that the accelerated Riemannian method reproduces the same convergence rate as the ODE.
Moreover, and remarkably, Nesterov's accelerated gradient descent in the coordinates $\x$ simultaneously features faster convergence and smaller oscillations compared to both the ODE solution and the first accelerated scheme.

\begin{figure}[ht]
    \centering
    \begin{subfigure}[b]{0.475\textwidth}
        \centering
        \includegraphics[width=0.95\textwidth]{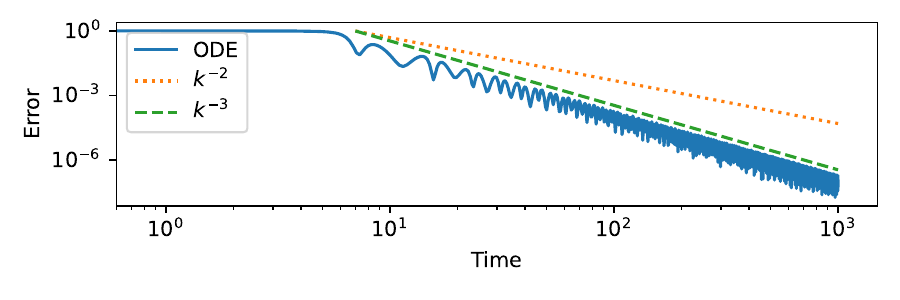}
        \vskip -2ex
	    \subcaption{ODE solution}
	    \label{fig:AcceleratedODE}
    \end{subfigure}
    \vskip 2ex
    
    \begin{subfigure}[b]{0.475\textwidth}
        \centering
        \includegraphics[width=0.95\textwidth]{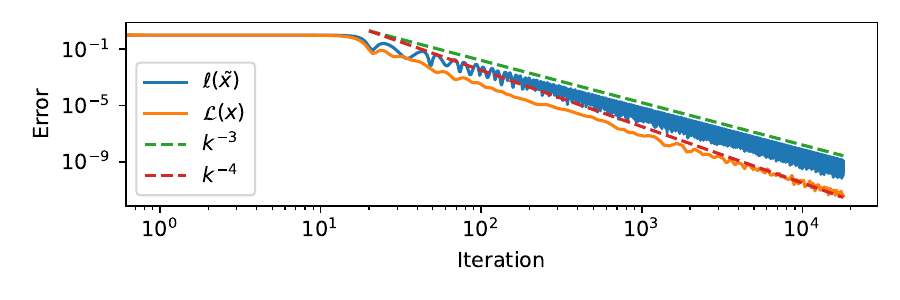}
        \vskip -2ex
	    \subcaption{Acceleration methods.}
	    \label{fig:AcceleratedGD}
    \end{subfigure}
    \caption{Accelerated gradient methods: (a) Numerical solution of ODE~\eqref{accelerated-gd-continuous} and (b) behavior of discretized accelerated gradient.}
    \label{fig:Acceleration}
\end{figure}

\subsection{Stepsize decay}
\label{sec:NumericsStepDecay}

Now, we illustrate the choice of decay rates $\eta = C k^{-\gamma}$ from Theorem~\ref{thm:GD_ConvergenceRate_Improved} for \textbf{GD-$n$L}.
We set $\alpha = 0.1$ and $\eta = 0.1$ for these experiments.
The dashed lines in Figure~\ref{fig:GammaComparison} correspond to predicted decay rates $\mathcal{O}(k^{1 - \gamma})$, which are in excellent agreement in our experiment, where $M=20$, $N=50$ and $x \in \R^N$ is a 3-sparse vector.
It is remarkable that the error decreases in almost discrete steps, which are ``just-in-time'' to keep the global decay rates and that the effect is stronger for larger $L$. {This step-wise decay is related to saddle-to-saddle dynamics as observed by \cite{berthier2023incremental,pesme2023saddle,chou2020implicit}.}

\begin{figure}[ht]
    \centering
    \begin{subfigure}[b]{0.475\textwidth}
        \centering
        \includegraphics[width=.95\textwidth]{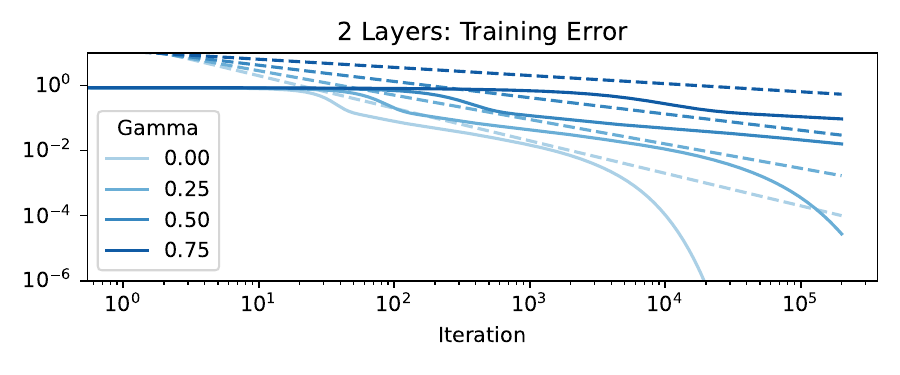}
        \vskip -2ex
        \subcaption{2 Layers}
        \label{fig:compare_gamma_2L}
    \end{subfigure}
    \vskip 2ex
    
    \begin{subfigure}[b]{0.475\textwidth}
        \centering
        \includegraphics[width=.95\textwidth]{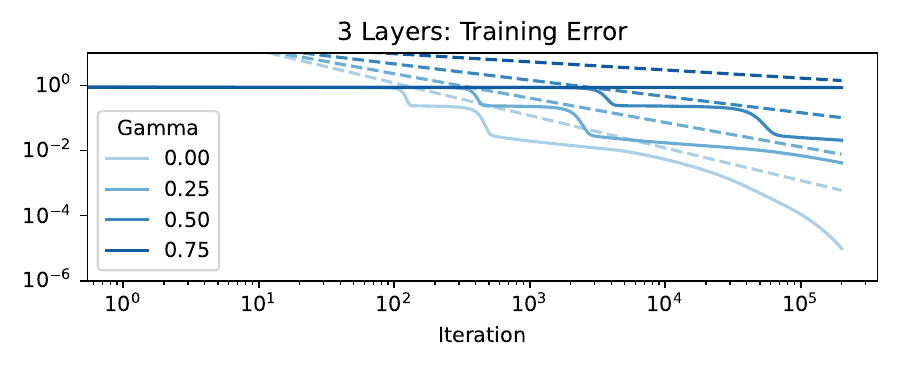}
        \vskip -2ex
        \subcaption{3 Layers}
        \label{fig:compare_gamma_3L}
    \end{subfigure}
    \caption{Impact of decaying stepsize rate $\gamma$, for (a) $L=2$ and (b) $L=3$.}
    \label{fig:GammaComparison}
\end{figure}

\subsection{Initialization and Number of Layers}
\label{sec:NumericsInit}

In the fourth experiment, we validate the $\ell_1$-norm regularization that, according to Theorem \ref{theorem:L1_equivalence_positive_PartII}, can be induced by using a small initialization for \textbf{GD-$n$L}.
For $\M = 10$ and $\N = 50$, we draw a random Gaussian matrix $\A\in \R^{\M\times \N}$, create a $3$-sparse ground-truth $\x \in \R^\N$, and set $\y = \A\x$.
Figure~\ref{fig:AlphaComparison} depicts the $\ell_1$-norm of the limits of \textbf{GD-$2$L} and \textbf{GD-$3$L}, for $10^{-3} \le \alpha \le 10^{-1}$ and constant step-size $\eta = 10^{-1}$.
As a benchmark, the $\ell_1$ minimizer among all feasible solutions is computed via basis pursuit (\textbf{L1-MIN}). Figure \ref{fig:AlphaComparison} shows that \textbf{GD-$2$L} and \textbf{GD-$3$L} converge to $\ell_1$-norm minimizers if $\alpha$ is sufficiently small.
As predicted by Theorem \ref{theorem:L1_equivalence_positive_PartII}, the requirements on $\alpha$ to allow such regularization are milder for the $3$-layer case \textbf{GD-$3$L}.
Let us finally mention that neither \textbf{LH-NNLS} nor \textbf{CVX-NNLS} reach $\ell_1$-minimality; the former comes close, but the latter arrives at a solution with $\ell_1$-norm of $3.514$.
Seemingly, the matrix $\A$, although sufficiently well-behaved for sparse recovery in general, does not guarantee the uniqueness of the NNLS solution here, {cf.\ Section \ref{sec:NNLS_sparse}}.

\begin{figure}[ht]
    \centering
    \begin{subfigure}[b]{0.45\textwidth}
        \centering
        \includegraphics[width=0.95\textwidth]{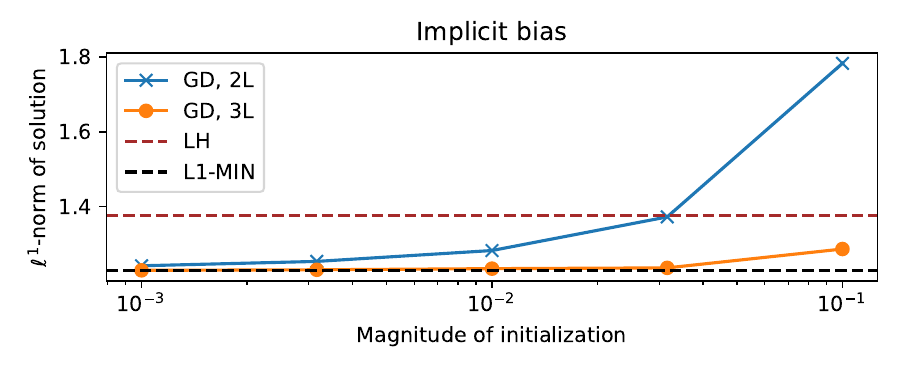}
        \vskip -2ex
	    \subcaption{Initialization.}
	    \label{fig:AlphaComparison}
    \end{subfigure}
    \vskip 2ex
    
    \begin{subfigure}[b]{0.45\textwidth}
        \centering
        \includegraphics[width=0.95\textwidth]{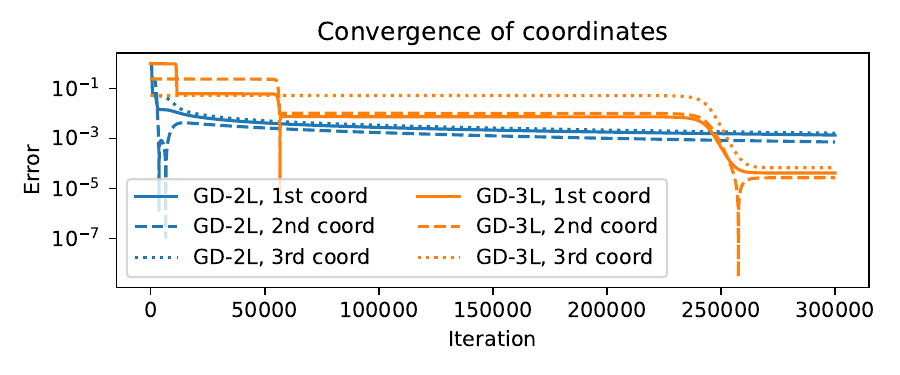}
        \vskip -2ex
	    \subcaption{Number of layers.}
	    \label{fig:LayerComparison}
    \end{subfigure}
    \caption{Influence of initialization and number of layers on \textbf{GD-$n$L}, cf.\ Section \ref{sec:NumericsInit}.}
    \label{fig:InitAndStepsize}
\end{figure}

In a further experiment, we take a closer look at the reconstruction behavior of \textbf{GD-$n$L}.
In particular, we compare how different entries of the ground truth are approximated over time for $n=2$ and $n=3$ layers.
For initialization magnitude and step-size we choose $\alpha = 10^{-2}$ and $\eta = 10^{-2}$.
We set $\M = 30$ and $\N = 50$, draw a random Gaussian matrix $\A\in \R^{\M\times \N}$, create a $3$-sparse non-negative ground-truth $\x \in \R^\N$, and set $\y = \A\x$. Note that $\A$ satisfies the assumptions of Theorem \ref{theorem:robustness}.
Figure~\ref{fig:LayerComparison} depicts the entry-wise error between the three non-zero ground-truth entries and the corresponding entries of the iterates of \textbf{GD-$2$L} and \textbf{GD-$3$L}.
As already observed in previous related works, we see that a deeper factorization leads to sharper error transitions that occur later and that more dominant entries are recovered faster than the rest.
Interestingly, some kind of overshooting occurs in the dominant entries: dashed curves, for the second-highest coordinate, suggest that \textbf{GD-$n$L} does not monotonically decrease the error in all components but rather concentrates heavily on the leading component(s) in the beginning and only starts distributing the error over time.
In this way, \textbf{GD-$n$L} could be interpreted as a self-correcting greedy method.

\subsection{Stability with Negative Entries}
\label{sec:NumericsStability}

\begin{figure*}
\captionsetup[subfigure]{justification=centering}
    \centering
    \begin{subfigure}[b]{0.19\textwidth}
        \centering
        \includegraphics[width = .5\textwidth]{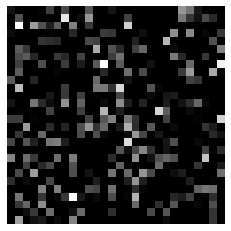}
        \subcaption*{HL-NNLS \\ ($q=0.5$)}
    \end{subfigure}
    \hfill
    \begin{subfigure}[b]{0.19\textwidth}
        \centering
        \includegraphics[width = .5\textwidth]{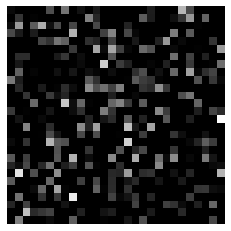}
        \subcaption*{TNT-NN \\ ($q=0.5$)}
    \end{subfigure}
    \hfill
    \begin{subfigure}[b]{0.19\textwidth}
        \centering
        \includegraphics[width = .5\textwidth]{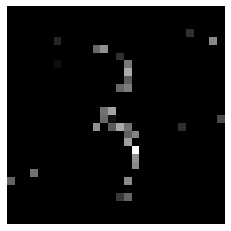}
        \subcaption*{SGD-3L \\ ($q=0.5$)}
    \end{subfigure}
    \hfill
    \begin{subfigure}[b]{0.19\textwidth}
        \centering
        \includegraphics[width = .5\textwidth]{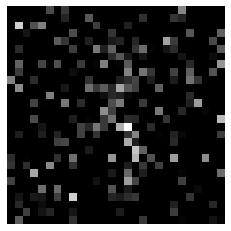}
        \subcaption*{GD-3L \\ ($q=0.5$)}
    \end{subfigure}
    \hfill
    \begin{subfigure}[b]{0.19\textwidth}
        \centering
        \includegraphics[width = .5\textwidth]{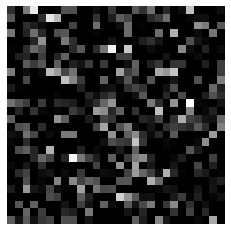}
        \subcaption*{PGD \\ ($q=0.5$)}
    \end{subfigure}\\
    \vskip 3mm
    \begin{subfigure}[b]{0.19\textwidth}
        \centering
        \includegraphics[width = .5\textwidth]{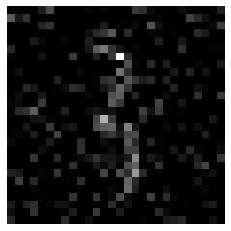}
        \subcaption*{HL-NNLS \\ ($q=0.05$)}
    \end{subfigure}
    \hfill
    \begin{subfigure}[b]{0.19\textwidth}
        \centering
        \includegraphics[width = .5\textwidth]{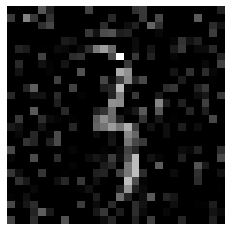}
        \subcaption*{TNT-NN \\ ($q=0.05$)}
    \end{subfigure}
    \hfill
    \begin{subfigure}[b]{0.19\textwidth}
        \centering
        \includegraphics[width = .5\textwidth]{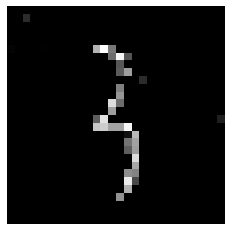}
        \subcaption*{SGD-3L \\ ($q=0.05$)}
    \end{subfigure}
    \hfill
    \begin{subfigure}[b]{0.19\textwidth}
        \centering
        \includegraphics[width = .5\textwidth]{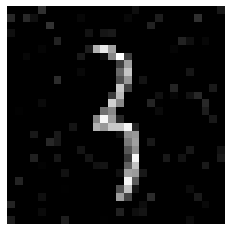}
        \subcaption*{GD-3L \\ ($q=0.05$)}
    \end{subfigure}
    \hfill
    \begin{subfigure}[b]{0.19\textwidth}
        \centering
        \includegraphics[width = .5\textwidth]{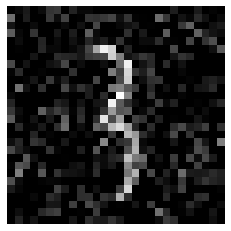}
        \subcaption*{PGD \\ ($q=0.05$)}
    \end{subfigure}
    \caption{Illustration of the MNIST reconstruction, see Section \ref{sec:NumericsStability}.
    }
    \label{fig:MNIST_imag}
\end{figure*}

In our final experiment, we compare the stability of the proposed methods when recovering perturbed signals that are not strictly non-negative anymore. We consider two different set-ups. We use images from the MNIST data set and CIFAR10. In Appendix \ref{sec:NumericsStability_gaussian}, we analyze the perturbed behavior with generic random signals. In both scenarios we set $\alpha=10^{-2}$ and $\eta = 10^{-2}$ for \textbf{GD-$n$L} and \textbf{SGD-$n$L}. It is noteworthy that in all tested instances \textbf{GD-$n$L} and \textbf{SGD-$n$L} outperform the established methods in reconstruction quality, cf.\ Figure \ref{fig:MNIST_imag}. Whereas \textbf{HL-NNLS} and \textbf{TNT-NN} are designed to retrieve only non-negative signals, the gradient-based methods can stably deal with vectors that have small negative components by not using explicit constraints.

\textbf{MNIST signals.} Again $\A\in\mathbb{R}^{M\times N}$ is a random Gaussian matrix, where now $\M=300$ and $\N=28^2=784$. We take $\x_+$ to be the original MNIST image (number three) and define a corrupted signal $\x$ including negative (mean zero) Gaussian noise $\x_-$. For $q \in [0,1]$, we scale $\x_+,\x_-$ such that
\begin{equation}\label{eq:q_level}
    \|\x_+\|_2^2 = 1-q
    \quad \text{and} \quad
    \|\x_-\|_2^2 = q.
\end{equation}
The perturbed signal is then given by $\x = \x_+ - \x_-$, and $q$ regulates the negative corruption. We regard the copy of $\x_+$ scaled to $\ell_2$-norm $(1-q)$ as ground truth and, by abuse of notation, also refer to it as $\x_+$. The corresponding measurements are given as $\y = \A\x = \A(\x_+ - \x_-)$.\\
These figures clearly show that the gradient descent-based methods outperform the established NNLS solvers. Only for small negative noise levels and MNIST data \textbf{SGD-$3$L} yields worse results than \textbf{HL-NNLS} and \textbf{TNT-NN}. As can be seen from Figure \ref{fig:MNIST_imag}, this worse error is mainly caused by incorrect values on the support of $\x_+$. Visually, even for small $q$, the MNIST reconstruction of \textbf{SGD-$3$L} is far better than the one of \textbf{HL-NNLS} and \textbf{TNT-NN}. The experiment also reveals two interesting points. First, whereas it has numerically been observed by \cite{pesme2021implicit} that, compared to gradient descent, stochastic gradient descent reduces the generalization (resp.\ approximation) error if measured in $\ell_2$-norm, we observe this (in the case of NNLS) only for large values of $q$, i.e., strong negative perturbations. For small values of $q$, Figure \ref{fig:compare} rather suggests that \textbf{GD-$3$L} outperforms \textbf{SGD-$3$L}. Second, for all values of $q$, the solutions computed by \textbf{SGD-$3$L} are visually closer to the ground truth than the ones computed by \textbf{GD-$3$L}, cf.\ Figure \ref{fig:MNIST_imag}. This suggests that, even in the simple context of sparse recovery, (i) the $\ell_2$-norm might not be the appropriate measure for the generalization error and (ii) the stochasticity in \textbf{SGD-$3$L} apparently improves the generalization quality. Formalizing and proving this observation is an appealing topic for future research. We also show that, despite fast, \textbf{PGD} does not induce an implicit bias as prominent as \textbf{GD-$3$L} or \textbf{SGD-$3$L}.

\textbf{CIFAR10 signals.}
We conduct similar experiments on perturbed CIFAR10 images in gray-scale, as shown in Figure \ref{fig:cifar10}. Those images are in general less sparse than MNIST images, which affects the performance of each algorithm. The most significant difference between reconstructions from MNIST and from CIFAR10 is the outcome of SGD. For CIFAR10, GD yields the best reconstruction while SGD {induces a too strong sparsity regularization and} yields the worst reconstruction.

\section{CONCLUSION}
\label{sec:Conclusion}
\vspace{-.2cm}
In this paper, we have shown that vanilla gradient flow/descent on the least squares objective can trade constraints for the complexity of the objective function. We use that to scalably solve NNLS. Our approach comes with strong theoretical guarantees, including global convergence to NNLS solutions at an $\mathcal{O}(1/t)$ rate for gradient flow and an $\mathcal{O}(1/k)$ rate for gradient descent with decaying step sizes. Moreover, we prove that an accelerated gradient flow achieves an improved $\mathcal{O}(1/t^2)$ rate and we numerically show that this scheme attains an $\mathcal{O}(1/k^{3})$ decay. These results provide a new perspective on solving NNLS by leveraging the implicit bias of gradient descent, avoiding limitations such as the need for careful step size tuning and the lack of theoretical guarantees that affect some existing solvers. Our analysis also reveals intriguing connections between the overparametrization literature and classical optimization problems. We believe the techniques developed here, trading geometry by reparametrization, could potentially be extended to other constrained optimization problems, opening up an exciting new paradigm for optimization algorithm design.\\
There remain many promising directions for future work, including extending {our discrete analysis to the accelerated case} and exploring applications to large-scale problems in areas such as non-negative matrix factorization. More broadly, we see great potential in exploiting the implicit bias of gradient descent, a phenomenon that has primarily been studied for explaining neural network training, to design new optimization methods for classical numerical problems.

\vspace{1.5cm}

\bibliographystyle{unsrtnat}
\bibliography{ref}

\clearpage

\appendix

\onecolumn
\aistatstitle{Supplementary Materials to the paper ``Get rid of your constraints and reparametrize:
A study in NNLS and implicit bias''}

This supplementary material provides additional details, proofs, and experiments to support the main paper. The contents are organized as follows:

In Appendix \ref{sec:RelatedWork}, we provide an extensive review of related work, covering both the literature on overparameterization and implicit regularization as well as previous approaches to solving non-negative least squares problems.
Appendix \ref{sec:ProofRemark1} offers a detailed proof of Remark \ref{remark2.2}, establishing the connection between our approach and overparameterized models.
Appendix \ref{sec:MainProofs} contains the complete proof for Theorem \ref{theorem:L1_equivalence_positive}, which is central to our convergence analysis.
In Appendix \ref{sec:Riemannian}, we derive the Riemannian metric and Riemannian gradient associated with our reparameterized flow.
Appendix \ref{sec:acc_proof} presents the proof of Theorem \ref{thm:Accelerated}, demonstrating the accelerated convergence rate for our proposed continuous dynamics.
Appendix \ref{sec:gd_proof} provides a comprehensive proof of Theorem \ref{thm:GD_ConvergenceRate_Improved}, establishing convergence rates for discrete gradient descent.
In Appendix \ref{sec:NNLS_sparse}, we explore the application of our method to sparse recovery problems, including the proof of Theorem \ref{theorem:robustness} on stability as well as the proof of Remark \ref{rem:WeightedL1}, discussing weighted $\ell_1$-norm regularization.
Finally, Appendices \ref{sec:AccelerationDiscretization} and \ref{sec:AdditionalNumerics} present implementation details of the accelerated methods and additional numerical experiments, including large-scale problems and comparisons with projected gradient descent, to further validate our theoretical results and demonstrate the practical efficacy of our approach.

\tableofcontents

\section{RELATED WORK --- EXTENDED}
\label{sec:RelatedWork}

The results we presented in Section \ref{sec:MainResults} unite two very different and (up to this point) independent lines of mathematical research: the decade-old question of how to solve NNLS in an efficient way and the rather recent question of what kind of implicit bias gradient descent exhibits.

\subsection{Related work --- Overparameterization and implicit regularization}
\label{sec:ImplicitBias}

The recent success of overparametrization in machine learning models \citep{goodfellow2014GAN} has raised the question of how a plain algorithm like (stochastic) gradient descent can succeed in solving highly non-convex optimization problems like the training of deep neural networks and, in particular, find ``good'' solutions, i.e., parameter configurations for which the network generalizes well to unseen data. The objective functions in such tasks typically have infinitely many global minimizers (usually there are infinitely many networks fitting the training samples exactly in the overparameterized regime \citep{zhang2017rethink}), which means that the choice of algorithm strongly influences which minimum is picked. This intrinsic tendency of an optimization method towards minimizers of a certain shape has been dubbed ``implicit bias'' and, in the case of gradient descent, has led to an active line of research in the past few years.\\
Based on numerical simulations, several works \citep{neyshabur2017geometry,neyshabur2015,zhang2017rethink,zhang2021understanding} systematically studied this implicit bias of gradient descent in the training of deep neural networks. Whereas it is not even clear by now how to measure the implicit bias --- one could quantify it, e.g., in low complexity \citep{neyshabur2015} or in high generalizability \citep{hochreiter1997flat} ---, the empirical studies observed that the factorized structure of such networks is crucial for successful training. However, due to the complexity of the model class, there is still no corresponding thorough theoretical analysis/understanding available. To close the gap between theory and practice, simplified ``training'' models have been proposed and analyzed in a great number of works: vector factorization for data with \citep{vaskevicius2019implicit,zhao2019implicit} or without \citep{arora2018optimization,gunasekar2019implicit,chou2021more} restricted isometry property, and matrix factorization with deep structure \citep{arora2019implicit,chou2020implicit,gunasekar2017implicit,neyshabur2017geometry,neyshabur2015} or shallow structure  \citep{geyer2019implicit,Bach2019implicit,Gissin2019Implicit,stoger2021small}, kernel-like models \citep{soudry2018implicit,woodworth2020kernel}, mirror descent\citep{wu2021implicit}, together with techniques such as balancing \citep{wang2022large}, early stopping \citep{li2021implicit}, and analysis regarding to the robustness of those models \citep{liu2022robust,you2020robust}.

Essentially, all of these works agree in the point that, if initialized close to zero and applied to a plain least-squares formulation of factorized shape, cf.\ $\Lover$ in \eqref{eq:Lover}, vanilla gradient descent/flow exhibits an implicit bias towards global minimizers that are sparse (vector case) resp.\ of low-rank (matrix case). This is remarkable since it shows that in overparametrized regimes, gradient descent has an implicit tendency toward ``simple'' solutions.
In particular, it has been observed that such an implicit bias of gradient descent can be explained by three complementary perspectives: implicit reparametrization of the underlying gradient flow \citep{neyshabur2017geometry,li2022implicit,chou2023induce}, mirror flow \citep{krichene2015accelerated,gunasekar2021mirrorless,li2022implicit}, and Riemannian gradient flow \citep{li2022implicit}. One can roughly distinguish three phases on the way from the first partial results that aimed to understand the surprising properties of GD in the overparametrized setting to a general research program focused on the theory of implicit bias: (i.) In the beginning, researchers characterized the limit \citep{gunasekar2017implicit,arora2019implicit} assuming convergence; in many instances, they needed additional assumptions such as infinitesimal initialization. Such a characterization can also be analyzed from a mirror flow perspective  \citep{krichene2015accelerated,gunasekar2021mirrorless,li2022implicit}. (ii.) Once the characterization has been established, many works started to derive sufficient conditions for convergence \citep{Bach2019implicit,chou2021more,li2022implicit} and extended the results to a range of (small) initialization as opposed to infinitesimal initialization. (iii.) Most of the above-mentioned works only analyze gradient flow since it is in general difficult to establish convergence rate results for GD in the overparametrized regime due to non-convexity in the loss function. In principle, recent work on reparametrized gradient flow \citep{chou2023induce} suggests linear convergence of gradient methods on overparametrized linear regression. Due to a strong dependence of the therein appearing constants on the conditioning of the optimization problem, however, the obtained linear rates do not explain the convergence rates observed in practice. Another direction is to take step sizes as large as possible to enter the edge of stability regime  \citep{arora2022understanding,wu2023implicit,wang2023good} for which hardly any theory exists so far. 
In the infinite width regime, such results can even be connected to general neural networks via the neural tangent kernel \citep{jacot2018NTK}. However, fully understanding the implicit bias of GD in the training of finite-width networks remains a challenge.

Whereas the obtained insights on vector and matrix factorization might not yet resolve the mystery of deep learning, they provide valuable tools for more classical problems. 
One such example is sparse recovery, which lies at the interface of high-dimensional statistics and signal processing. Over the past two decades, several methods have been developed to recover intrinsically low-dimensional data, such as sparse vectors or low-rank matrices. 
The underlying theory, which became known under the name compressive sensing \citep{CT06,donoho,foucart2013compressed}, establishes conditions under which such data can be uniquely and efficiently reconstructed. Nonetheless, under noise corruption, most existing methods require tuning hyper-parameters with respect to the (in principle unknown) noise level. It is still an intense topic of research to understand such tuning procedures \citep{berk2021best}. 
As already mentioned by \cite{vaskevicius2019implicit,chou2021more}, the implicit sparsity regularization creates a bridge between the recent studies on gradient descent and compressive sensing. In fact, the overparametrized gradient descent provides a tuning-free alternative to established algorithms like LASSO and Basis Pursuit.
In a similar manner, our present contribution stems from the insight that in most of the above papers \citep{arora2018optimization,arora2019implicit,chou2020implicit,chou2021more}, the signs of components do not change over time when gradient flow is applied. Instead of viewing this feature as an obstacle \cite[e.g.,][Section 2]{chou2021more}, we use it to naturally link the implicit bias of gradient descent/flow to another ubiquitous problem of numerical mathematics, namely \eqref{eq:NNLS}. 

\subsection{Related work --- NNLS}
\label{sec:NNLSrelatedwork}

The first algorithm proposed to solve \eqref{eq:NNLS} appeared in 1974 in the book of \cite{lawson1995solving}, Chapter 23, where its finite convergence was proved, and a Fortran routine was presented.\footnote{This algorithm is the standard one implemented in many languages: \emph{optimize.nnls} in the SciPy package, \emph{nnls} in R, \emph{lsqnonneq} in MATLAB and \emph{nnls.jl} in Julia.} Like the previous papers \citep{stoer1971numerical, golub_saunders_70, gill1973numerically} it builds upon the solution of linear systems. Similar to the simplex method, the algorithm is an active-set algorithm that iteratively sets parts of the variables to zero in an attempt to identify the active constraints and solves the unconstrained least squares sub-problem for this active set of constraints. It is still, arguably, the most famous method for solving \eqref{eq:NNLS}, and several improvements have been proposed in a series of follow-up papers \citep{bro1997fast}, \citep{van2004fast}, \citep{myre2017tnt}, \citep{luo2011efficient}, \citep{dessole2021lawson}. Its caveat, however, is that it depends on the normal equations, which makes it infeasible for ill-conditioned or large-scale problems. Moreover, up to this point, there exist no better theoretical guarantees for the algorithm and its modifications than convergence in finitely many steps \cite[Chapter 23]{lawson1995solving}, \cite[Theorem 3]{dessole2021lawson}. \\
Another line of research has been developing projected gradient methods for solving \eqref{eq:NNLS}, which come with linear convergence guarantees \citep{kim2013non}, \citep{polyak2015projected} \citep{lin2007projected}. In contrast to active set and interior point methods, projected gradient methods do not require solving a linear system of equations at each step and thus scale better in high-dimensional problems. However, the success of these methods heavily depends on a good choice of step size and, as we already highlighted in Section \ref{sec:Introduction}, the projection step may render established acceleration methods for vanilla gradient descent useless.

\section{PROOF OF REMARK \ref{remark2.2}}
\label{sec:ProofRemark1}

Since the problem {in} \eqref{eq:NNLS} is convex, the Karush-Kuhn-Tucker (KKT) conditions are necessary and sufficient for optimality \citep[e.g.,][]{bjorck1996numerical}. 

\begin{theorem}{(Karush-Kuhn-Tucker conditions for NNLS)} \label{thm:KKT_NNLS}
A point $\x_+ \in \mathbb{R}^N$ is a solution of problem \eqref{eq:NNLS} if and only if there exists $\pmb{\mu}^* \in \mathbb{R}^n$ such that
\begin{align*}
    \pmb{\mu}^* = \A^T(\A \x_+ -  {\b}),\quad
    \pmb{\mu}^*\odot\x_+ = \0,\quad
    \pmb{\mu}^*,\x_+\geq \0.
\end{align*}
\end{theorem}
From these conditions, one can observe that all solutions of \eqref{eq:NNLS} can be represented by stationary points of the functional
\begin{align*}
     \Lover\big(\x^{(1)}, \dots, \x^{(L)}\big)
    := \frac{1}{2}\,\Big\| \A \big( \x^{(1)} \odot \cdots \odot \x^{(L)} \big) -\y   \Big\|_2^2
\end{align*}
in \eqref{eq:Lover}, i.e., points that satisfy, for all $\ell \in [L]$, the equation
\begin{equation}\label{estationary point}
   \nabla \LossPoly_{\x^{(\ell)}} \big(\x^{(1)}, \dots, \x^{(L)}\big)
    =\left[\A^{\T} \Big( \A \big(\x^{(1)} \odot \cdots \odot \x^{(L)}\big) -\y \Big) \right] \odot \Big( \bigodot_{k \neq \ell} \x^{(k)} \Big) = \0.
\end{equation}
Indeed, for any given optimal point $\mathbf{x}_+ $ of the \eqref{eq:NNLS} problem, it is straightforward to check that the conditions in Theorem \ref{thm:KKT_NNLS} imply that $\x^{(1)} = \cdots = \x^{(L)} = \x_+^{\odot \frac{1}{L}}$ is a stationary point of \eqref{eq:Lover} with $\x^{(1)} \odot \cdots \odot \x^{(L)} = \x_+$. The same argument holds for stationary points of the reduced functional \eqref{eq:LossPoly}. In particular, this implies that any solution of \eqref{eq:NNLS} can be described as the limit of gradient flow on \eqref{eq:Lover} under suitably chosen \emph{identical} initialization.

\section{PROOFS OF THEOREMS \ref{theorem:L1_equivalence_positive} AND \ref{theorem:L1_equivalence_positive_PartII}}
\label{sec:MainProofs}

Let us for convenience recall the statements of Theorems \ref{theorem:L1_equivalence_positive} and \ref{theorem:L1_equivalence_positive_PartII} before starting the proof. For simplicity, we merge both results into a single statement. 

\begin{theorem} \label{theorem:L1_equivalence_positive_merged}
    Let $\L\geq 2$, $\A\in\mathbb{R}^{\M\times\N}$ and $\y\in\mathbb{R}^{\M}$. Let $\x_0 > \0$ be fixed and let $\x(\t)$ follow the flow $\x'(\t) = -\nabla \LossPoly(\x(\t))$ with $\x(0) = \x_0$.
    Let $S_+$ be the set defined in \eqref{eq:NNLS} and set $\xprod = \x^{\odot L}$.
    
    Then the limit $\xprodinfty:= \lim_{\t\to\infty} \xprod(\t)$ exists and lies in $S_+$. Also, there exists an absolute constant $C > 0$ that only depends on the choice of $\A$, $\y$, and $\x_0$ such that 
    \begin{align*}
        {\LossBasic(\xprod(\t)) - \LossBasic(\x_+) \le \frac{C}{\t}},
    \end{align*}
    for any $\t > 0$. Let $\epsilon>0$ and assume in addition that $\x_0 = \alpha \bo$. If
    \begin{align}\label{eq:alpha_epsilon_bound_merged}
        \alpha
        \leq 
        h(Q_+,\epsilon)
        :=\begin{cases}
        \exp\left(-\frac{1}{2} - \frac{Q_+^2 + \N e^{-1}}{2\epsilon}\right) &\text{if }\L=2\\
        \left(\frac{2\epsilon}{\L(Q_+ +\N + \epsilon)}\right)^{\frac{1}{\L-2}}  &\text{if }\L>2
        \end{cases},
    \end{align}
    where $Q_+ = \min_{\z \in S_+} \|\z\|_1$, then the $\ell_1$-norm of $\xprodinfty$ satisfies
    \begin{align*}
        \|\xprodinfty\|_1 - \min_{\z \in S_+} \|\z\|_1 \leq \epsilon.
    \end{align*}
\end{theorem}

The proof consists of two major steps: First, we prove that the flow on \eqref{eq:LossPoly} converges and characterize its limits as the minimizer of a specific optimization problem. Second, we show that if $\x_0 = \alpha \bo$, the limit approximately minimizes the $\ell_1$-norm among all possible solutions of \eqref{eq:NNLS}. Whereas the second step, {including the non-asymptotic dependence between $\alpha$ and $\varepsilon$}, is taken unchanged from \cite{chou2021more}, the first step, which is the backbone of the argument, requires a different reasoning due to the (possible) non-existence of solutions $\A\z = \y$. We will rely on the concept of Bregman divergence.

\begin{definition}[Bregman Divergence]\label{def:Bregman_Divergence}
Let $\F:\Omega\to\mathbb{R}$ be a continuously-differentiable, strictly convex function defined on a closed convex set $\Omega$. The Bregman divergence associated with $\F$ for points $p,q\in\Omega$ is defined as
\begin{equation}
    D_{\F}(p,q) = \F(p) - \F(q) - \langle \nabla \F(q), p-q \rangle.
\end{equation}
\end{definition}

\begin{lemma}[\citep{Bregman1967}]
   The Bregman divergence $D_\F$ is non-negative and strictly convex in $p$.
\end{lemma}

Theorem \ref{theorem:Bregman_positive} below, which shows convergence of $\x^{\odot \L}$ and characterizes the limit $\xprodinfty$, resembles Theorem 5 by \cite{chou2021more}. Note, however, that the definition of $S_+$ is different and that Theorem \ref{theorem:Bregman_positive} does not require the existence of a solution of $\A\x=\y$. Moreover, we consider a function $\F: \mathbb{R}_{\geq 0}^{\N} \to \mathbb{R}$ that differs slightly from its counterpart in Eq.\ (2.7) of \cite{chou2021more} and is based on an important concept that appears in thermodynamics, namely, the so-called \emph{Tsallis q-logarithm}, introduced in a seminal paper  by \cite{tsallis1988possible}. It is given by 
\begin{equation}\label{eq:Bregman2}
    \F(\x) 
    = \langle \x, \ln_q(\x) \rangle,
\end{equation}
where $q = 2 - \frac{2}{L} \in [1,2)$ and the $q$-logarithm is defined as
\begin{equation*}
\ln_q(u) =
\begin{cases}
\frac{x^{1-q}-1}{1-q}, \text{ if } q\ne 1;\\
\ln(x), \text{ if } q=1,
\end{cases}
\end{equation*}
where we set $0 \log(0) = \lim_{z \to 0} z \log(z) = 0$. The $q$-logarithm $\ln_q(u)$ has the following basic properties:
\begin{itemize}
    \item[(i)] $\ln_q(1/u) = -u^{q-1}\ln_q(u)$;
    \item[(ii)] $\ln_q(uv) = \ln_q(u)+\ln_q(v) + (1-q)\ln_q(u) \ln_q(v)$
    \item[(iii)] $\ln_q(u/v) = v^{q-1}(\ln_q(u) - \ln_q(v))$
\end{itemize}

Furthermore, its derivative is given by $\frac{d}{du}(\ln_q(u))=u^{-q}$. Using these properties, it follows that the gradient of $F$ is 
\begin{align}
    \label{eq:GradF}
    \nabla F(\x) = \ln_q(\x) + \x^{\odot 1-q}
\end{align}
and that the Hessian of $F$ is  
\begin{align}
    \label{eq:HessF}
    \nabla^2F(\x) = (2-q)\diag(\x^{\odot -q}).
\end{align} 
In particular, the function $F$ is convex, since $q=2 - 2/L <2$ and, therefore, $\nabla^2F(\x) \succcurlyeq 0$. Thus, the Bregman divergence $D_{\F}$ is well-defined.
Note that $F$ can be written as 
\begin{align}\label{eq:Breman_positive}
    \F(\x) 
    = \langle \x, \ln_q(\x)\rangle=
    \begin{cases}
    \frac{1}{2} \langle  \x \log \x, \bo \rangle  & \text{if }\L = 2\\
    \frac{L}{2-L} \langle \x^{\odot \frac{2}{\L}} - \x, \bo \rangle  & \text{if }\L > 2,
    \end{cases}
\end{align}
which is worthwhile to mention since it differs from $F$ in Eq.\ (2.7) of \cite{chou2021more} only by affine shifts, i.e., both variants of $F$ induce the same Bregman divergence $D_F$ {and the following observation can directly be transferred.

\begin{lemma}[Lemma 7 of \cite{chou2021more}] \label{lem:Boundedness}
   Let $\F$ be the function defined in \eqref{eq:Breman_positive}, $\z \ge \0$ be fixed, and $\x(\t) \colon \R_{\ge 0} \to \R_{\ge 0}^\N$ be a continuous function with $\x(0) > \0$ and $\| \x(\t) \|_2 \to \infty$. Then $D_\F(\z,\x(\t)) \to \infty$.
\end{lemma}

We are now ready to state Theorem \ref{theorem:Bregman_positive}.
}

\begin{theorem}\label{theorem:Bregman_positive}
Let $\xprod(\t) = \x(\t)^{\odot\L}$ and
\begin{align} \label{eq:dynamicsBregman_positive}
    \x'(\t)
    =-\nabla \LossPoly (\x(\t))
    =-\left[\A^{\T}( \A\x^{\odot \L}(\t) -\y)\right] \odot \x^{\odot \L-1}(\t)
\end{align}
with $\x(0)\geq 0$. Then $\xprodinfty:= \lim_{\t\to\infty} \xprod(\t)$ exists and

\begin{align}\label{eq:optimal_x_positive}
		\xprodinfty \in
		&\argmin_{\z \in S_+}D_{\F}(\z,\xprod(0)),
\end{align}
where $S_+$ is defined in \eqref{eq:NNLS}. Moreover, {there exists an absolute constant $C > 0$ that only depends on the choice of $\A$, $\y$, and $\x_0$ such that,} for all $\t>0$,
\begin{align*}
    {\| \A\xprod(\t) - \b \|_2^2 - \| \A\x_+ - \b \|_2^2  \le \frac{C}{t}.}
\end{align*} 
\end{theorem}

{In the proof of Theorem \ref{theorem:Bregman_positive}, we} require the following geometric observation.

\begin{lemma} \label{lem:ConvexProjection}
	Let $\mathcal C \subset \R^n$ be a convex and closed set. Let $\w \in \mathcal C$, $\y \in \R^n$ with $\y \neq \w$, and let $\PC\y$ denote the (unique) Euclidean projection of $\y$ onto $\mathcal C$. Then
	\begin{align}
	\langle \w - \y, \w - \PC \y \rangle
	\ge \frac{1}{2}  \| \w - \PC \y \|_2^2.
	\end{align}
	Furthermore, it holds that
	\begin{align*}
	    \| \w - \PC \y \|_2^2 \le \| \w - \y \|_2^2 - \| \y - \PC \y \|_2^2.
	\end{align*}
\end{lemma}

\begin{proof}
	By definition of the projection, we have that
	\begin{align*}
	\| \w - \y \|_2^2 
	\ge \| \PC \y - \y \|_2^2
	&= \| -(\w - \PC \y) + \w - \y \|_2^2\\
	&= \| \w - \PC \y \|_2^2 - 2 \langle \w - \PC \y, \w - \y \rangle + \| \w - \y \|_2^2.
	\end{align*}
	Rearranging the terms yields
	\begin{align*}
	2 \langle \w - \y, \w - \PC \y \rangle 
	\ge \| \w - \PC \y\|_2^2
	\end{align*}
	and thus the first claim. Now note that $\langle \w - \PC \y, \y - \PC \y \rangle \le 0$ by $\PC$ being a convex projection \citep[e.g.,][Lemma 3.1]{bubeck2015convex}. This yields
	\begin{align*}
	    \| \w - \y \|_2^2 
	    &= \| \w - \PC \y - (\y- \PC \y) \|_2^2 \\
	    &= \| \w - \PC \y \|_2^2 - 2 \langle \w - \PC \y, \y - \PC \y \rangle + \| \y - \PC \y \|_2^2 \\
	    &\ge \| \w - \PC \y \|_2^2 + \| \y - \PC \y \|_2^2
	\end{align*}
	and thus the second claim.
\end{proof}

With these tools at hand, we can finally provide a proof for Theorem \ref{theorem:Bregman_positive}.

\begin{proof}[Proof of Theorem \ref{theorem:Bregman_positive}]
Let us begin with a brief outline of the proof. We will first compute the time derivative of $D_{\F}({\x_+},\xprod(\t))$, for ${\x_+} \in S_+$ where $S_+$ was defined in \eqref{eq:NNLS}. These first steps appeared before in the proof of Lemma 8 by \cite{chou2021more} and are only detailed for the reader's convenience. With the help of Lemma \ref{lem:ConvexProjection} we can then show that $\A\xprod(\t) \to \y_+$ {where $\b_+$ has been defined in \eqref{eq:CPlus}}. This part of the proof requires more effort than in existing works \citep[e.g.,][]{chou2021more} due to the possible non-existence of pre-images for $\y$ under $\A$. We wrap up (i.e., deduce boundedness and the convergence of $\| \xprod(\t) \|_2$ via Lemma \ref{lem:Boundedness}, and characterize the limit $\xprodinfty$) by following the steps of \cite{chou2021more}. Keep in mind that the definition of the set $S_+$ is different from the one used by \cite{chou2021more}. 

We start with computing $\partial_\t D_{\F}(\z,\xprod(\t))$. 
For any $\z \in \R^\N$, we have
\begin{align*}
\partial_\t D_{\F}(\z,\xprod(\t))
&= \partial_\t \left[ \F(\z) - \F(\xprod(\t)) - \langle \nabla \F(\xprod(\t)), \z-\xprod(\t) \rangle \right]\\
&= 0 - \langle \nabla \F(\xprod(\t)), \xprod'(\t) \rangle 
- \langle \partial_\t \nabla \F(\xprod(\t)), \z-\xprod(\t) \rangle
+ \langle \nabla \F(\xprod(\t)), \xprod'(\t) \rangle\\
&= - \langle \partial_\t \nabla \F(\xprod(\t)), \z-\xprod(\t) \rangle.
\end{align*} {Using \eqref{eq:HessF},} we calculate that
\begin{align*}
\partial_t \nabla \F(\xprod(\t))
&= \nabla^2 F (\xprod(\t)) \cdot \xprod'(\t)
= \frac{2}{L} \xprod(\t)^{\odot(-2+\frac{2}{L})} \odot \xprod'(\t)
= \frac{2}{L} \x(\t)^{\odot(-2L+2)} \odot \big( L\x(\t)^{\odot(L-1)} \odot \x'(\t) \big) \\
&= 2\x(\t)^{\odot(-2L+2)} \odot  \x(\t)^{\odot(L-1)} \odot \big( -\left[\A^{\T}( \A\x^{\odot \L}(\t) -\y)\right] \odot \x^{\odot \L-1}(\t) \big) \\
&= -2\left[\A^{\T}( \A\xprod(\t) -\y)\right]
\end{align*}

By definition, it is clear that $S_+ \neq \emptyset$. Let ${\x_+}$ be any element of $S_+$. Since $C_+ = \{ \A\z \colon \z \in \R_{\ge 0} \} \subset \R^\M$ is a closed convex set and $\y_+ = \mathbb P_{C_+} \y$, we can set $\mathcal C = C_+$, $\w = \A\xprod(\t) \in C_+$, and $\y = \y$ in Lemma \ref{lem:ConvexProjection} and obtain that
\begin{align} 
\begin{split} \label{eq:dtDF}
\partial_\t D_{\F}({\x_+},\xprod(\t))
&= -2\langle \A\xprod(\t) - \y, \A\xprod(\t) - \A{\x_+} \rangle \\
&\le -  \| \A\xprod(\t) - \y_+ \|_2^2.
\end{split}
\end{align}
Note that $D_\F({\x_+},\xprod(\t))$ must converge for $\t \to \infty$ as it is non-negative and non-increasing. In particular, $0 \le D_\F({\x_+},\xprod(\t)) \le D_\F({\x_+},\xprod(0))$ for all $\t \ge 0$.

We now show that $\A\xprod(\t) \to \y_+$ with the given rate. {To this end, we define the Lyapunov function
\begin{equation*}
\mathcal{E}(t) = \frac{t}{2}(\|\A\xprod(t)-\b\|_2^2 - \|\b_+ -\b\|_2^2) + \frac{1}{2}D_F(\x_+,\xprod(t)).    
\end{equation*}
The derivative $\mathcal{E}'(t)$ is given by
\begin{align*}
\mathcal{E}'(t) &= \frac{1}{2}\Big(\|\A\xprod(t)-\b\|_2^2 - \|\b-\b_+\|_2^2\Big) + \frac{t}{2} \partial_t \|\A\xprod(t)-\b\|_2^2 + \frac{1}{2} \partial_t D_F(\x_+,\xprod(t))\\
&= \ell(\xprod(t)) - \ell(\x_+) +  Lt\langle \nabla \LossPoly(\x(t)),\x'(t)\rangle  - \langle \A^T(\A\xprod(t) - \b), \x_+ - \xprod(t) \rangle\\
&= \LossBasic(\xprod(t)) - \LossBasic(\x_+) - Lt\|\nabla \LossPoly(\x(t))\|_2^2 + \langle \nabla \LossBasic(\xprod(t)),\x_+ - \xprod(t)\rangle\\
&= -D_\ell(\x_+,\xprod(t)) - Lt\|\nabla \LossPoly(\x(t))\|_2^2 \le 0.
\end{align*}
Hence, the function $\mathcal{E}(t)$ is non-increasing, which yields
\begin{align*}
t\big(\|\A\xprod(t)-\b\|_2^2 - \|\b-\b_+\|_2^2\big) \le 2\mathcal{E}(t) \le 2\mathcal{E}(0) = D_F(\x_+,\xprod(0)).
\end{align*}
This implies that $\|\A\xprod(t)-\b\|_2^2$ converges to $\|\b-\b_+\|_2^2$ with order $\mathcal{O}(1/t)$.}
In particular, we conclude that $\lim_{\t \to \infty} \A\xprod(\t) = \y_+$ {since $\b_+$ is the unique projection of $\b$ onto $C_+$ as defined in \eqref{eq:CPlus}}.

Using Lemma \ref{lem:Boundedness}, boundedness of $\| \xprod(\t) \|_2$ follows as in the work of \cite{chou2021more}. Just fix any ${\x_+} \in S_+$ and notice that $D_\F({\x_+},\xprod(\t)) \to \infty$ if $\| \xprod(\t) \|_2 \to \infty$ which contradicts the fact that $0 \le D_\F({\x_+},\xprod(\t)) \le D_\F({\x_+},\xprod(0))$. Let us denote by $B$ a sufficiently large compact ball around the origin such that $B \cap S_+ \neq \emptyset$ and $\xprod(\t) \in B$, for all $\t \ge 0$.

Now assume that there exists no ${\x_+} \in S_+ \cap B$ such that $\lim_{\t\to\infty}D_\F({\x_+},\xprod(\t)) = 0$, i.e., by compactness of $S_+ \cap B$ there exists $\varepsilon > 0$ such that $\lim_{\t\to\infty} D_\F({\x_+},\xprod(\t)) > \epsilon$, for all ${\x_+} \in S_+ \cap B$. By strict convexity of $D_\F(\cdot,\xprod(\t))$, this implies that $\xprod$ is bounded away from the set $S_+$ on $B$ and $\| \A\xprod(\t) - \y_+ \|_2$ cannot converge to zero contradicting the just obtained convergence $\lim_{\t \to \infty} \A\xprod(\t) = \y_+$. Hence, there exists ${\x_+} \in S_+$ with $\lim_{\t\to\infty} D_\F({\x_+},\xprod(\t)) = 0$.

For any such ${\x_+}$, let us assume that $\xprod(\t) \nrightarrow {\x_+}$. Then there exists $\varepsilon > 0$ and a sequence of times $\t_0,\t_1,\dots$ with $\| {\x_+} - \xprod(\t_k) \|_2 \ge \varepsilon$ and $\lim_{k\to\infty}D_\F({\x_+},\xprod(\t_k)) = 0$. Since $\xprod(\t_k)$ is a bounded sequence, a (not relabeled) subsequence converges to some $\bar{\x}$ with $\| {\x_+} - \bar{\x} \|_2 \ge \varepsilon$ and $D_\F({\x_+},\bar{\x}) = 0$. Since $D_\F(\bar{\x},\bar{\x}) = 0$ and $D_\F$ is non-negative, this is a contradiction to the strict convexity of $D_\F(\cdot,\bar{\x})$. Hence, $\xprod_\infty = \lim_{\t \to \infty} \xprod(\t) \in S_+$ exists and is the unique solution satisfying $\lim_{\t\to\infty}~D_\F~(\xprod_\infty,\xprod(\t))~=~0$.

Because $\partial_\t D_{\F}({\x_+},\xprod(\t))$ is identical for all ${\x_+} \in S_+$ (the second line of \eqref{eq:dtDF} does not depend on the choice of ${\x_+}$), the difference
\begin{align}
\Delta_{{\x_+}} = D_{\F}({\x_+},\xprod(0)) - D_{\F}({\x_+},\xprodinfty)
\end{align}
is also identical for all ${\x_+} \in S_+$. By non-negativity of $D_{\F}$,
\begin{align}
\label{eq:RandomLabel}
D_{\F}({\x_+},\xprod(0))
\geq \Delta_{{\x_+}}
= \Delta_{\xprodinfty}
= D_{\F}(\xprodinfty,\xprod(0)).
\end{align}
Thus
\begin{equation}
\begin{aligned}
\xprodinfty \in
&\argmin_{\z \in S_+}D_{\F}(\z,\xprod(0)).
\end{aligned}
\label{eq:optimal_x_positive_inproof}
\end{equation}
This complets the proof.
\end{proof}

Having Theorem \ref{theorem:Bregman_positive} at hand, it is straight-forward to derive Theorem \ref{theorem:L1_equivalence_positive_merged}.

\begin{proof}[Proof of Theorem \ref{theorem:L1_equivalence_positive_merged}]
    By Theorem \ref{theorem:Bregman_positive}, the limit $\x_\infty := \lim_{\t\to\infty} \x(\t)$ exists and $\xprodinfty = \x_\infty^{\odot \L}$ lies in $S_+$. 
    The quantitative bound in \eqref{eq:alpha_epsilon_bound_merged} can be deduced from \eqref{eq:optimal_x_positive} by following the proof of Theorem 4 by \cite{chou2021more} since $D_F$ based on $F$ in \eqref{eq:Breman_positive} and $D_F$ of \cite{chou2021more} are the same, cf.\ our discussion below \eqref{eq:Breman_positive}.
\end{proof}

\section{RIEMANNIAN METRIC AND RIEMANNIAN GRADIENT}
\label{sec:Riemannian}

The reparametrized flow we consider in this work induces a Riemannian metric {on the manifold $\R_{>0}$} and our method can be regarded as a Riemannian gradient approach. From this point of view, the underlying metric distorts the space and ensures that positive solutions are always obtained. Indeed, by definition of the flow in Theorem \ref{theorem:L1_equivalence_positive} we have that 
\begin{equation*}
    \x'(\t) = -\x^{\odot \L-1 } \odot[\A^\top(\A\x^{\odot \L} -\y)].
\end{equation*}
From this we can deduce the dynamics governing $\xprod=\x^{\odot \L}$ via
\begin{align*}
    \xprod'(\t)
    &= \L\x^{\odot (\L-1) }\odot\x'(\t)
    = -\L\x^{\odot (\L-1) }\odot[\x^{\odot(\L-1)}\odot[\A^\top(\A\x^{\odot \L}-\y)]]\\
    &= -\L\x^{\odot (2\L-2)}\odot[\A^\top(\A\x^{\odot \L}-\y)]
    = -\L\xprod^{\odot(2-\frac{2}{\L})}\odot[\A^\top(\A\xprod-\y)]\\
    &= -\L\xprod^{\odot(2-\frac{2}{\L})}\odot \nabla \LossBasic(\xprod)
    = -\L\xprod(t)^{\odot q}\odot \nabla \LossBasic(\xprod),
\end{align*}
where we define $q = 2 - \frac{2}{\L} \in [1,2)$. {The} matrix 
\begin{equation*}
    G(\xprod)=\diag(\xprod)^{\tfrac{2}{\L}-2} = \diag(\xprod)^{-q},
\end{equation*}
defines the Riemannian metric $\langle \u,\v \rangle_{\xprod} = \langle \xprod^{\odot (-q)}\odot \u,\v\rangle$ {on $\R_{>0}$} such that the dynamics of $\xprod'(\t)$ can be recast as
\begin{equation}\label{riemman_metric}
    \xprod'(\t) = - \L G(\xprod)^{-1}\nabla \LossBasic(\xprod) =: - \L\nablaRiemann \LossBasic(\xprod),    
\end{equation}
where $\nablaRiemann$ denotes the Riemannian gradient. For further details on Riemannian metrics and gradients, we refer the reader to Chapter 3 in the book of \cite{boumal2023introduction}.

\section{PROOF OF THEOREM \ref{thm:Accelerated}}
\label{sec:acc_proof}

We now analyze the dynamics given by
\begin{equation}
\begin{split}
    \halfstep'(t) &= -\frac{t}{2} \halfstep(t)^{\odot q} \odot \nabla \LossBasic(\xprod(t)) \nonumber,\\
    \xprod'(t) &= \frac{2}{t}{(\halfstep(t) - \xprod(t))}, \\
    \halfstep(0)&=\xprod(0)=\xprod_0> \0.
\end{split}
\end{equation}

\begin{remark}
    Let us highlight to observations:
    \begin{enumerate}
        \item[(i)] We observe that the sign of $\halfstep(t)$ does not change. {In fact, if $\xi(t_0)_i=0$, for some $t_0\ge 0$ and $i \in [N]$, then we have $\xi(t_0)_i = \xi'(t_0)_i = 0$. By the Picard-Lindel\"of theorem, we know that $\xi(t)_i = 0$, for all $t \ge 0$, which contradicts $\xi(0)_i > 0$.} Thus, the ODE admits a unique solution $\halfstep(t)>\0$, for all $t\ge 0$.
        \item[(ii)] {The equivalent second order EDO is given by 
        \begin{equation*}
            \xprod''(t) + \frac{3}{t}\xprod'(t) = -\Big(\xprod(t)+\frac{t}{2}\xprod'(t)\Big)^{\odot q} \odot \nabla \LossBasic(\xprod(t)).
        \end{equation*}} 
    \end{enumerate}
\end{remark}

\begin{proof}[{Proof of Theorem \ref{thm:Accelerated}}]
Consider ${\x_+}\in S_+$. We then have that $\ell({\x_+}) = \min_{\z \in \R_{\ge 0}^\N} \ell(\z) = \frac{1}{2}\|\y_+-\y\|_2^2$ {where $\b_+$ has been defined in \eqref{eq:CPlus}}. Now, consider the Lyapunov functional given by
\begin{equation*}
    \mathcal{E}(t)= t^2 (\LossBasic(\xprod(t))-\LossBasic({\x_+})) + \frac{4}{2-q} D_F({\x_+},\halfstep(t))
\end{equation*}
{where the function $\F(\x) = \langle \x, \ln_q(\x)\rangle$ has been defined in \eqref{eq:Breman_positive} and the Bregman divergence $D_F$ has been defined in Definition \ref{def:Bregman_Divergence}.}
We start by proving that {$\mathcal{E}(t)$ decreases in $\t$}, i.e., $\mathcal{E}'(t)\le 0$. First we will calculate the derivative $\frac{d}{dt}D_F({\x_+},\halfstep(t))$. Remember from {\eqref{eq:GradF} and \eqref{eq:HessF}} that the gradient of $F$ is given by $\nabla F(\halfstep) = \ln_q(\halfstep) + \halfstep^{\odot 1-q}$ and the Hessian is described by $\nabla^2F(\halfstep) = (2-q)\diag(\halfstep^{\odot -q})$. Using {this together with} $\halfstep'(t)=-\frac{t}{2}\halfstep(t)^{\odot q} \odot  \nabla \LossBasic(\xprod)$, we have that
\begin{align*}
\partial_t \nabla F(\halfstep(t)) &= \nabla^2 F(\halfstep(t))\halfstep'(t) = (2-q)\diag(\halfstep(t)^{\odot -q})\halfstep'(t)\\ 
&= -\frac{t}{2}(2-q)\nabla \LossBasic(\xprod(t)),
\end{align*}
{which further yields that}
\begin{align}
\frac{d}{dt}D_F({\x_+},\halfstep(t)) 
\nonumber &= \frac{d}{dt}\Big(F({\x_+}) - F(\halfstep(t)) - \langle \nabla F(\halfstep(t)),{\x_+}-\halfstep(t)\rangle\Big)\\
\nonumber& = - \langle \nabla F(\halfstep(t)),\halfstep'(t)\rangle - \langle \nabla F(\halfstep(t)),-\halfstep'(t)\rangle - \langle \partial_t \nabla F(\halfstep(t)), {\x_+}-\halfstep(t)\rangle\\
\nonumber&= - \langle \partial_t \nabla F(\halfstep(t)), {\x_+}-\halfstep(t)\rangle\\
\label{eq:pithagorean-agd}& = \frac{t}{2}(2-q)\langle \nabla \LossBasic(\xprod(t)),{\x_+}-\halfstep(t)\rangle.
\end{align}

Thus, 
\begin{align*}
\mathcal{E}'(t) &= 2t(\LossBasic(\xprod(t))-\LossBasic({\x_+})) + t^2 \langle \nabla \LossBasic(\xprod),\xprod'(t)\rangle + \frac{4}{2-q}\frac{d}{dt}D_F({\x_+},\halfstep(t)).\\
&= 2t(\LossBasic(\xprod(t))-\LossBasic({\x_+})) + t^2 \langle \nabla \LossBasic(\xprod),\xprod'(t)\rangle + 2t \langle \nabla \LossBasic(\xprod(t)),{\x_+}-\halfstep(t)\rangle \\
&=  2t(\LossBasic(\xprod(t))-\LossBasic({\x_+})) + t^2 \langle \nabla \LossBasic(\xprod),\xprod'(t)\rangle + 2t \langle \nabla \LossBasic(\xprod(t)),{\x_+} - \xprod(t) - \frac{t}{2}\xprod'(t)\rangle\\
&= 2t(\LossBasic(\xprod(t))-\LossBasic({\x_+})) + 2t \langle \nabla \LossBasic(\xprod(t)),{\x_+} - \xprod(t)\rangle\\
&= -2t D_{\LossBasic}({\x_+},\xprod(t))\le 0. \end{align*}

We can finally establish the accelerated convergence rate for the dynamics given by~\eqref{accelerated-gd-continuous}. Indeed, {since $\mathcal{E}(t)$ is decreasing, we conclude that}
    \begin{align*}
    t^2(\LossBasic(\xprod(t))-\LossBasic({\x_+})) \le \mathcal{E}(t) \le \mathcal{E}(0) = \frac{4}{2-q}D_F({\x_+},\xprod_0)
\end{align*}
{which implies that 
\begin{align*}
   0\le \LossBasic(\xprod(t))-\LossBasic({\x_+}) \le \frac{4D_F({\x_+},\xprod_0) }{(2-q)t^2}.
\end{align*}
}

\end{proof}

\section{DISCRETE ANALYSIS: PROOF OF THEOREM \ref{thm:GD_ConvergenceRate_Improved}}
\label{sec:gd_proof}

{Let us now establish} that gradient descent {on \eqref{eq:LossPoly}} converges to the solution of \ref{eq:NNLS} {by proving} Theorem \ref{thm:GD_ConvergenceRate_Improved}. 
{Let us abbreviate $M = \|\A^T\A\| 
$, let us fix $\x_0>\0$, and consider GD {on the product $\xprod = \x^{\odot L}$ defined via}
\begin{equation}\label{eq:discrete-gd-2}
    \xprod_{k+1} = \xprod_k - \etagd_k\xprod_k^{\odot q}\odot \A^T(\A\xprod_k - \y), 
\end{equation}
with $k\ge 0$, where $\etagd_k>0$ is a decreasing sequence.
We start by providing conditions under which $\ell(\xprod_k)$ forms a decreasing sequence. 
}
Recall that
\begin{equation*}
    \widetilde\nabla \LossBasic(\xprod)
    = \xprod^{\odot q}\odot \nabla \LossBasic(\xprod)
    = \xprod^{\odot q}\odot \A^T(\A\xprod - \y)
\end{equation*}
is the Riemannian gradient vector, with respect to Riemannian metric as described in {Section \ref{sec:Riemannian}}. Thus, the reparametrized GD in \eqref{eq:discrete-gd-2} reduces to the Riemannian gradient descent algorithm
\begin{equation}\label{riemannianGD}
   \xprod_{k+1} = \xprod_k - \etagd_k\widetilde\nabla \LossBasic(\xprod_k) 
\end{equation}
without retraction step.

{
\begin{remark}
It is noteworthy that in \eqref{riemannianGD} we work with a GD algorithm defined on the product representation $\xprod$. Wouldn't it be more natural to analyze GD on $\x$ instead? Fixing $\x_0>0$ this would mean to define
\begin{equation*}
\x_{k+1} = \x_k - \etagd_k \nabla\LossPoly(\x_k),
\end{equation*} 
where $\etagd_k>0$ is a decreasing sequence. In contrast to the continuous case, estimating a convergence order for this algorithm is difficult, given the non-convex nature of $\LossPoly(\x)$. Moreover, the Hessian $\nabla^2 \LossPoly(\x)$ does not have a uniform bound given its nonlinear dependency on $\x$. Taking the $L$-th Hadamard power on both sides leads to
\begin{equation*}
\xprod_{k+1} = (\x_k - \etagd_k \nabla\LossPoly(\x_k))^{\odot L}
\end{equation*}
which can be linearized by a first-order Taylor approximation of $f(\mathbf h) = (\x_k + \mathbf h)^{\odot L}$ at $\mathbf h = \0$ to yield
\begin{align*}
    \xprod_{k+1} 
    &\approx \xprod_k - L \cdot \x_k^{\odot (L-1)} \cdot \etagd_k \nabla\LossPoly(\x_k) \\
    &= \xprod_k - L \cdot \x_k^{\odot (L-1)} \cdot \etagd_k \cdot \frac{1}{L} \x_k^{\odot (L-1)} \cdot \nabla \ell(\xprod_k) \\
    &= \xprod_k - \etagd_k \xprod_k^q \cdot \nabla \ell(\xprod_k) \\
    &= \xprod_k - \etagd_k \widetilde\nabla \ell(\xprod_k).
\end{align*}
Our method in \eqref{riemannianGD} can thus be interpreted as a linearization of the naive GD routine for $\LossPoly$. Importantly, both algorithms follow the same continuous flow $\x'(t) = -\nabla \LossPoly(\x(t))$.
\end{remark}
}

\begin{lemma} 
\label{lem:DescentStep}
    {Consider GD as defined in \eqref{riemannianGD}.} Let $C \ge 0$, let $\etagd_{k-1} \le \varepsilon := \min\{ \frac{1}{2 \| \A^T \y \|_2 C^{q-1}}, \frac{1}{{2}M C^q} \}$, and let the $(k-1)$-th iterate $\xprod_{k-1} \in {\R_{> 0}^\N}$ satisfy $\| \xprod_{k-1} \|_2 \le C$. Then, $\xprod_k \in {\R_{> 0}^\N}$ and $\LossBasic(\xprod_{k})\le \LossBasic(\xprod_{k-1})$.
\end{lemma}

\begin{proof}
    To see the first claim, note that 
    \begin{align*}
        \xprod_{k} &= \xprod_{k-1} - \etagd_{k-1} \xprod_{k-1}^{\odot q}\odot \A^T(\A\xprod_{k-1} - \y) \\
        &= \xprod_{k-1} \odot \big( \boldsymbol{1} - \etagd_{k-1} \xprod_{k-1}^{\odot q-1}\odot \A^T(\A\xprod_{k-1} - \y) \big) \\
        &> 0
    \end{align*}
    by the choice of $\varepsilon$ and the assumption that $\| \xprod_{k-1} \|_2 \le C$.
    
    Let us now turn to the second claim. Since the Hessian matrix {of $\ell$ satisfies $\nabla^2 \LossBasic(\x) = \A^T\A$, Taylor's formula yields} 
    \begin{align*}
    \LossBasic(\x+{\bf{d}}) \le \LossBasic(\x) + \langle \nabla \LossBasic(\x),{\bf{d}}\rangle +  \frac{M\|{\bf{d}}\|^2}{2},  
    \end{align*}
    for all $\x,{\bf d} \in \mathbb R^N$. Applying this to $\x=\xprod_{k-1}$ and ${\bf d} = -\etagd_{k-1}\widetilde \nabla \LossBasic(\xprod_{k-1})= -\etagd_{k-1} \xprod_{k-1}^{\odot q}\odot\nabla \LossBasic(\xprod_{k-1})$, we obtain
    \begin{align*}
    \LossBasic(\xprod_k) &\le \LossBasic(\xprod_{k-1}) - \etagd_{k-1}\langle \nabla \LossBasic(\xprod_{k-1}),\widetilde\nabla \LossBasic(\xprod_{k-1})\rangle + \frac{M\etagd_{k-1}^2}{2}\|\widetilde{\nabla} \LossBasic(\xprod_{k-1})\|^2\\
    &= \LossBasic(\xprod_{k-1}) - \etagd_{k-1} \left\langle \widetilde\nabla \LossBasic(\xprod_{k-1}),\left( \1 - \frac{M\etagd_{k-1}}{2}\xprod_{k-1}^{\odot q} \right) \odot \nabla \LossBasic(\xprod_{k-1}) \right\rangle\\
    &= \LossBasic(\xprod_{k-1}) - \etagd_{k-1} \left\langle \nabla \LossBasic(\xprod_{k-1}), \xprod_{k-1}^{\odot q}\odot \left( \1 - \frac{M\etagd_{k-1}}{2}\xprod_{k-1}^{\odot q} \right) \odot \nabla \LossBasic(\xprod_{k-1}) \right\rangle. 
    \end{align*}
    By assumption we have that {$\xprod_{k-1}^{\odot q} > 0$ and} $(\1 - \frac{M}{2}\etagd_{k-1}\xprod_{k-1}^{\odot q})\ge 0$ which implies that $\LossBasic(\xprod_k)\le \LossBasic(\xprod_{k-1})$ and thus the claim.
\end{proof}

{Second, we show that the \emph{Bregman divergence balls} derived from the Tsallis q-logarithm $F$ {defined in \eqref{eq:Breman_positive}} are bounded.} 

\begin{lemma}
\label{lem:BoundedDFball}
    Let $F(\z) = \langle \z,\ln_q(\z) \rangle$ as in \eqref{eq:Breman_positive}, for $q \in [1,2)$. Then, for any $C\ge 0$ and $\x \in \R_{\ge 0}^\N$, the set
    \begin{align*}
        B_{D_F,C,\x} := \{ \z \in \R_{\ge 0}^\N \colon D_F(\x,\z) \le C \}
    \end{align*}
    is bounded. In particular, $B_{D_F,C,\x} \subset R B_\infty$ where $B_\infty$ is the $\ell_\infty$ unit ball and
    \begin{equation}
        R = \begin{cases}
            2\max(\|\x\|_\infty,C+\ln(2)\|\x\|_\infty) &\text{if }q=1\\
            \big(C + \frac{1}{q-1}\|\x\|_\infty^{2-q}\big)^\frac{1}{2-q} &\text{if }q\in(1,2).
        \end{cases}
    \end{equation}
\end{lemma}

\begin{proof}
    To show the claim, we argue that unbounded sequences are not contained in $B_{D_F,C,\x}$. 
    First note that 
    $$D_F(\x,\z) = \sum_{i = 1}^\N D_f(x_i,z_i),$$
    where $f(z) = z \ln_q(z)$ is a univariate marginal of $F$ and $D_f(x,z) \ge 0$ by convexity of $f$.
    
    Let now $\z_k \subset \R_{\ge 0}^\N$ be a sequence with $\| \z_k \|_\infty \to \infty$. Then, there exists $i\in [\N]$ and a sub-sequence $k_j$ such that $\lim_{j\to\infty}(z_{k_j})_i = \infty$. Hence, we obtain that
    \begin{align*}
        D_F(\x,\z_k) 
        &\geq D_f(x_i,(z_k)_i)\\
        &= x_i \ln_q(x_i) 
        - (z_k)_i \ln_q( (z_k)_i ) 
        - ( \ln_q( (z_k)_i ) + (z_k)_i^{1-q} ) (x_i - (z_k)_i ) \\
        &= x_i \ln_q(x_i) 
        - x_i ( \ln_q( (z_k)_i ) + (z_k)_i^{1-q} )
        + (z_k)_i^{2-q} \\
        &= x_i \ln_q(x_i) 
        + (z_k)_i^{2-q} \Big( 1 - \frac{x_i}{(z_k)_i} - \frac{x_i \ln_q( (z_k)_i )}{(z_k)_i^{2-q}} \Big) \\
        &\overset{k \to \infty}{\to} \infty
    \end{align*}
    by assumption on $q$. We now derive the non-asymptotic bound. Since $D_F(\x,\z) = \sum_{i = 1}^\N D_f(x_i,z_i)$ and $D_f\geq0$, {the $i$-th entry $z_i$ (and with it $\| \z \|_\infty$) can be chosen largest} if $z_j=x_j$, for all $j\neq i$. Thus, we simply need to {control}
    \begin{equation*}
        \max_{z\geq 0} z \quad\text{subject to}\quad D_f(x,z)\leq C.
    \end{equation*}
    To get the bound, we separate the case $q=1$ and $q\neq1$. When $q=1$, we have
    \begin{align*}
        D_f(x,z)
        &= x\ln(x) - x  + z - \ln(z)x.
    \end{align*}
    {Taking into consideration} the constraint, we obtain
    \begin{equation}\label{eq:Bregman_1}
        z - \ln(z)x \leq C + x - x\ln(x).
    \end{equation}
    Let $f(z) := z-\ln(z)x$. Note that $f$ is convex with minimum at $z=x$. Furthermore, $f'(z)\geq \frac{1}{2}$ for all $z\geq 2x$. Therefore 
    \begin{align*}
        f(z)
        &\geq
        \begin{cases}
            f(x) &\text{if }z < 2x\\
            f(2x) + \frac{1}{2}(z-2x) &\text{if }z \geq 2x
        \end{cases}\\
        &\geq
        \begin{cases}
            x-\ln(x)x &\text{if }z < 2x\\
            x-\ln(2x)x + \frac{1}{2}z &\text{if }z \geq 2x.
        \end{cases}
    \end{align*}
    Since $z,x\geq0$, {we have that if $z\geq 2x$ then} \eqref{eq:Bregman_1} implies 
    \begin{equation*}
        x-\ln(2x)x + \frac{1}{2}z \leq C + x - x\ln(x),
    \end{equation*}
    or equivalently, $z\leq 2(C+\ln(2)x)$. {Applying a case distinction between $z\geq 2x$ and $z < 2x$,} we obtain that
    \begin{equation*}
        \{ \z \in \R_{\ge 0}^\N \colon D_F(\x,\z) \le C \}
        \subset \{ \z \in \R_{\ge 0}^\N \colon z_i \leq 2\max(x_i,C+\ln(2)x_i) \}
        \subset R B_\infty. 
    \end{equation*}
     When $q\neq1$, we have
    \begin{align*}
        D_f(x,z)
        &= x\ln_q(x) + \frac{x}{1-q} - \frac{2-q}{1-q}xz^{1-q}+z^{2-q}\\
        &= \frac{x^{2-q}}{1-q} - \frac{2-q}{1-q}xz^{1-q}+z^{2-q}.
    \end{align*}
    {Taking into consideration} the constraint $C\geq D_f(x,z)$, we obtain
    \begin{equation*}
        z^{2-q} + \frac{2-q}{q-1}xz^{1-q} \leq C + \frac{x^{2-q}}{q-1}.
    \end{equation*}
    Since $q\in(1,2)$, {we know that} $\frac{2-q}{q-1}x\geq 0$. Hence we have
    \begin{align*}
        z^{1-q} & \leq \frac{(q-1)C + x^{2-q}}{(2-q)x}\\
        z^{2-q} &\leq C + \frac{x^{2-q}}{q-1}.
    \end{align*}
    and consequently
    \begin{align*}
        \{ \z \in \R_{\ge 0}^\N \colon D_F(\x,\z) \le C \}
        \subset \big\{ \z \in \R_{\ge 0}^\N \colon \Big(\frac{(2-q)x}{(q-1)C + x^{2-q}}\Big)^\frac{1}{q-1} \leq z_i \leq \big(C + \frac{x^{2-q}}{q-1}\big)^\frac{1}{2-q} \big\}
        \subset R B_\infty.
    \end{align*}
    This completes the proof.
\end{proof}

{We have now all tools at hand to prove Theorem \ref{thm:GD_ConvergenceRate_Improved}.} Since the proof strategy is a discrete analog of the proof of Theorem \ref{thm:Accelerated}, it is convenient to read the latter before turning to the proof below.

\begin{proof}[Proof of Theorem \ref{thm:GD_ConvergenceRate_Improved}] 
    {For $\x_+ \in S_+$, consider} the energy function
    \begin{equation*}
        \mathcal{E}_k = \etagd_k k (\LossBasic(\xprod_k) - \LossBasic({\x_+})) + \frac{1}{2-q}D_F({\x_+},\xprod_k)
    \end{equation*}
    with $\etagd_k$ as defined in the proposition statement
    and let $\mathfrak{C} \ge 1$ be sufficiently large such that
    \begin{align}
    \label{eq:C_def_new}
        \{ \z \in \R_{\ge 0}^\N \colon D_F({\x_+},\z) \le D_F({\x_+},\xprod_0) + c_{\max} \} \cup \{{\x_+}\} \subset \mathfrak{C}B_2,
    \end{align}
    where $B_2$ is the $\ell_2$ unit ball. 
    By Lemma \ref{lem:BoundedDFball} such a {$\mathfrak{C} \le D_F({\x_+},\xprod_0) + c_{\max} + \| \x_+\|_\infty$} always exists and only depends on $c_{\max}$ and the distance between ${\x_+}$, $\xprod_0$, and the origin. Clearly, $\xprod_0 \in \mathfrak{C}B_2 \cap \R_{>0}^\N$. {Recall that $\etagd_k = \frac{\cgd}{(k+2)^\gamma}$, for $c > 0$ having been defined in the theorem statement and depending on $\mathfrak{C}$.}
    
    We first prove by induction that $\xprod_k \in \mathfrak{C}B_2 \cap \R_{>0}^\N$ and $\mathcal{E}_k \le \mathcal{E}_{k-1} + \cgd \left( \frac{(2-q)^{-1}}{(k+1)^{2\gamma}} - \frac{\delta}{2(k+1)^\gamma} \right) $, for any $0 \le k \le K$, where we set $\mathcal{E}_{-1} = \infty$.
    For $k=0$, the claim is trivially fulfilled.\\
    Assume now the claim holds for {$(k-1) \in [0,K)$}. Then $\xprod_{k-1} \in \R_{>0}^\N$ and $\| \xprod_{k-1} \|_2 \le \mathfrak{C}$, which gives for any $i \in [\N]$ that 
    \begin{align}
    \label{eq:EntryLB_new}
    \begin{split}
        (\xprode_k)_i 
        &= (\xprode_{k-1})_i - [\etagd_{k-1} \widetilde \nabla \LossBasic (\xprod_{k-1})]_i \\
        &= (\xprode_{k-1})_i \cdot \Big( 1 - \etagd_{k-1} (\xprode_{k-1})_i^{q-1} [\A^T(\A\xprod_{k-1} - \y)]_i \Big) \\
        &\ge \frac{1}{2} (\xprode_{k-1})_i, 
    \end{split}
    \end{align}
    where we used that, by definition of $c$ in $\etagd_{k-1}$,
    \begin{align*}
        \etagd_{k-1} (\xprode_{k-1})_i^{q-1} [\A^T(\A\xprod_{k-1} - \y)]_i 
        \le \etagd_{k-1} (\mathfrak{C}^{q} M + \mathfrak{C}^{q-1} \| \A^T \y \|_2)
        \le \frac{1}{2}.
    \end{align*}
    Consequently, $\xprod_{k} \in \R_{>0}^\N$ as well. Let us now observe that, {by \eqref{eq:GradF} and \eqref{eq:HessF},} gradient and Hessian of $\tilde\x \mapsto D_F({\x_+},\cdot)|_{\tilde\x}$ are given by 
    \begin{align*}
        \nabla D_F({\x_+},\cdot)|_{\tilde\x} &= \nabla_{\tilde\x} \Big( F({\x_+}) - F(\tilde\x) - \langle \nabla F(\tilde\x), {\x_+} - \tilde\x\rangle\Big)\\
        &= -\nabla F(\tilde\x) -  \nabla^2 F(\tilde\x) ({\x_+} - \tilde\x) + \nabla F(\tilde\x)\\
        &=-\nabla^2 F(\tilde\x)({\x_+} - \tilde\x) = -(2-q) \cdot \diag (\tilde\x^{\odot -q}) ({\x_+} - \tilde\x)\\ &= -(2-q) \cdot \tilde\x^{\odot -q}\odot ({\x_+} - \tilde\x)
    \end{align*}
    and 
    \begin{align*}
        \nabla^2 D_F({\x_+},\cdot)|_{\tilde\x}
        = -(2-q) \cdot \diag \Big( (q-1) \cdot \tilde\x^{\odot -q} - q \cdot {\x_+} \odot \tilde\x^{\odot -(q+1)} \Big).
    \end{align*}
    We thus can approximate $D_F({\x_+},\xprod_k)$ by the second-order Taylor approximation of $\tilde\x \mapsto D_F({\x_+},\cdot)|_{\tilde\x}$ around $\xprod_{k-1}$
    \begin{align}
    \label{eq:TaylorDF_new}
    \begin{split}
        &D_F({\x_+},\xprod_k) - D_F({\x_+},\xprod_{k-1}) \\
        &\le - \etagd_{k-1} \langle \nabla D_F({\x_+},\cdot)|_{\xprod_{k-1}}, \widetilde\nabla \LossBasic(\xprod_{k-1})\rangle 
        + \sup_{\tilde\x \in [\xprod_{k-1},\xprod_k]} \etagd_{k-1}^2 \Big| \widetilde\nabla \LossBasic(\xprod_{k-1})^T \cdot \nabla^2 D_F({\x_+}, \cdot)|_{\tilde\x} \cdot \widetilde\nabla \LossBasic(\xprod_{k-1}) \Big| \\
        &\le (2-q) \etagd_{k-1} \langle  {\x_+} - \xprod_{k-1}, \nabla \LossBasic(\xprod_{k-1})\rangle + \frac{\cgd}{(k+1)^{2\gamma}},
    \end{split}
    \end{align}
    where $[\xprod_{k-1},\xprod_k]$ denotes the connecting line between $\xprod_{k-1}$ and $\xprod_k$, and we used the following two observations: first,
    \begin{align}
    \label{eq:DFgrad_new}
    \begin{split}
        \langle \nabla D_F({\x_+},\cdot)|_{\xprod_{k-1}}, \widetilde\nabla \LossBasic(\xprod_{k-1})\rangle 
        &= -(2-q) \langle \xprod_{k-1}^{\odot -q}\odot ({\x_+} - \xprod_{k-1}), \xprod^{\odot q}_{k-1}\nabla \LossBasic(\xprod_{k-1})\rangle\\
        &= -(2-q) \langle  {\x_+} - \xprod_{k-1}, \nabla \LossBasic(\xprod_{k-1})\rangle;
    \end{split}
    \end{align}
    {and second,}
    \begin{align*}
        &\sup_{\tilde\x \in [\xprod_{k-1},\xprod_k]} \etagd_{k-1}^2 \Big| \widetilde\nabla \LossBasic(\xprod_{k-1})^T \cdot \nabla^2 D_F({\x_+}, \cdot)|_{\tilde\x} \cdot \widetilde\nabla \LossBasic(\xprod_{k-1}) \Big| \\
        &\le \sup_{\tilde\x \in [\xprod_{k-1},\xprod_k]} \etagd_{k-1}^2 \sum_{i=1}^\N  \Big| [\nabla^2 D_F({\x_+}, \cdot)|_{\tilde\x}]_{i,i} \Big| \cdot [\widetilde\nabla \LossBasic(\xprod_{k-1})]_i^2 \\
        &\le \sup_{\tilde\x \in [\xprod_{k-1},\xprod_k]} \etagd_{k-1}^2 \sum_{i=1}^\N (2-q) \cdot \Big( |  (q-1) \cdot \tilde x_i^{-q} | + | q \cdot (x_+)_i \cdot \tilde x_i^{-(q+1)}  | \Big) \cdot (\xprode_{k-1})_i^{2q} \cdot [\A^T(\A\xprod_{k-1} - \y)]_i^2 \\
        &\le 30 \cdot \mathfrak{C}^q \cdot \etagd_{k-1}^2 \cdot \| \A^T(\A\xprod_{k-1} - \y)\|_2^2 
        \le \etagd_{k-1}^2 \cdot 60 \cdot \mathfrak{C}^q \cdot (\mathfrak{C}^{2q} M^2 + \| \A^T \y \|_2^2) \\
        &\le \frac{\cgd}{(k+1)^{2\gamma}},
    \end{align*}
    where we used in the third from last step that $\tilde x_i \ge \frac{1}{2} (\xprode_{k-1})_i$ for any $\tilde\x \in [\xprod_{k-1},\xprod_k]$ by a similar calculation as in \eqref{eq:EntryLB_new}, that $(2-q)(q-1)$ and $(2-q)q$ are both upper bounded by $\frac{9}{4}$, and that $\| \xprod_{k-1} \|_2, \| {\x_+} \|_\infty \le \mathfrak{C}$ by our induction hypothesis and \eqref{eq:C_def_new}. The ultimate inequality then just follows from {recalling the definition of $c$ in $\etagd_{k-1}$}.\\
    Since by $\etagd_k \le \etagd_{k-1}$ and Lemma \ref{lem:DescentStep} 
    \begin{align*}
        \mathcal{E}_k - \mathcal{E}_{k-1}
        &= \etagd_k k (\LossBasic(\xprod_k) - \LossBasic({\x_+})) - \etagd_{k-1} (k-1) (\LossBasic(\xprod_{k-1}) - \LossBasic({\x_+})) + \frac{1}{2-q} \Big( D_F({\x_+},\xprod_k)
         - D_F({\x_+},\xprod_{k-1}) \Big) \\
        &\le \etagd_k (\LossBasic(\xprod_k) - \LossBasic({\x_+})) + \etagd_{k-1} (k-1) (\LossBasic(\xprod_k) - \LossBasic(\xprod_{k-1})) + \frac{1}{2-q} \Big( D_F({\x_+},\xprod_k)
         - D_F({\x_+},\xprod_{k-1}) \Big) \\
        &\le \etagd_k (\LossBasic(\xprod_k) - \LossBasic({\x_+})) + \frac{1}{2-q} \Big( D_F({\x_+},\xprod_k)
         - D_F({\x_+},\xprod_{k-1}) \Big) \\
        &\le \etagd_{k-1} (\LossBasic(\xprod_{k-1}) - \LossBasic({\x_+})) + \frac{1}{2-q} \Big( D_F({\x_+},\xprod_k)
         - D_F({\x_+},\xprod_{k-1}) \Big),
    \end{align*}
    we obtain from \eqref{eq:TaylorDF_new} that
    \begin{align*}
        \mathcal{E}_k &\le  \mathcal{E}_{k-1} 
        + \etagd_{k-1} \big(\LossBasic(\xprod_{k-1}) - \LossBasic({\x_+})\big) 
        + \etagd_{k-1} \langle  {\x_+} - \xprod_{k-1}, \nabla \LossBasic(\xprod_{k-1})\rangle + \frac{\cgd (2-q)^{-1}}{(k+1)^{2\gamma}} \\
        &= \mathcal{E}_{k-1} - \etagd_{k-1} D_\LossBasic({\x_+}, \xprod_{k-1})  + \frac{\cgd(2-q)^{-1}}{(k+1)^{2\gamma}} \\
        &= \mathcal{E}_{k-1} - \frac{\etagd_{k-1}}{2} \| \A ({\x_+} - \xprod_{k-1}) \|_2^2  + \frac{\cgd (2-q)^{-1}}{(k+1)^{2\gamma}} \\
        &\le \mathcal{E}_{k-1} + \cgd \left( \frac{(2-q)^{-1}}{(k+1)^{2\gamma}} - \frac{\delta}{2(k+1)^\gamma} \right),
    \end{align*}
    where we calculated the explicit form of $D_\LossBasic$ in the penultimate step and used that by assumption 
    \begin{align*}
        \| \A ({\x_+} - \xprod_{k-1}) \|_2^2 \ge \Big( \| \A \xprod_{k-1} - \y \|_2 - \| \A{\x_+} - \y \|_2 \Big)^2 \ge \delta,
    \end{align*}
    for $k-1 < K$. To conclude the induction step, we need to argue that $\| \xprod_k \|_2 \le \mathfrak{C}$. To this end, note that
    \begin{align}
    \begin{split}
    \label{eq:Recursion}
        D_F({\x_+},\xprod_k) &\le (2-q) \mathcal{E}_k
        \le (2-q) \mathcal{E}_0 + \cgd \sum_{\j=1}^k \Big(\frac{1}{(\j+1)^{2\gamma}} - \frac{(2-q)\delta}{2(\j+1)^\gamma} \Big) \\
        &\le D_F({\x_+},\xprod_0) + c_{\max},
    \end{split}
    \end{align}
    where we used Lemma \ref{lem:sum} and $\cgd < 1$ in the last step. Clearly, this implies $\| \xprod_k \|_2 \le \mathfrak{C}$ by \eqref{eq:C_def_new}.
    
    Using once more the recursive bound of $\mathcal{E}_k$ {appearing in \eqref{eq:Recursion}}, we estimate for any $k \le K$ that
    \begin{align*}
        \etagd_{k} k (\LossBasic(\xprod_k) - \LossBasic({\x_+})) \le \mathcal{E}_k 
        \le  \frac{1}{2-q} ( D_F({\x_+}, \xprod_0) + c_{\max} ).
    \end{align*}
    Reorganizing the estimate and using that $(k+2)^\gamma/k \le 2k^{1-\gamma}$, for $k \ge 2$, yields the claim.
\end{proof}

To conclude the proof of Theorem \ref{thm:GD_ConvergenceRate_Improved}, we finally provide the estimate of the sum in \eqref{eq:Recursion}.

\begin{lemma}
\label{lem:sum}
    Let $\gamma \in (0,1)$, $q \in [1,2)$, and $\delta > 0$. Then,
    \begin{align}
    \label{eq:sum}
        \sum_{\j=1}^k \Big(\frac{1}{(\j+1)^{2\gamma}} - \frac{(2-q)\delta}{2(\j+1)^\gamma} \Big)
        \le 
        \begin{cases}
            \zeta(2\gamma) & \text{for } \gamma > \frac{1}{2} \\
            2 \left( \log\big( \frac{2}{(2-q) \delta} \big) + (2-q)\gamma \right) & \text{for } \gamma = \frac{1}{2} \\
            \frac{1}{1-2\gamma} \Big( \frac{2}{(2-q)\delta} \Big)^{\frac{1}{\gamma}-2} + 2 (2-q)\delta & \text{for } \gamma < \frac{1}{2}
        \end{cases},
    \end{align}
    where $\zeta$ denotes the Riemann zeta function.
\end{lemma}
\begin{proof}
    For $\gamma > \frac{1}{2}$, we have that
    \begin{align*}
        \sum_{\j=1}^k \Big(\frac{1}{(\j+1)^{2\gamma}} - \frac{(2-q)\delta}{2(\j+1)^\gamma} \Big)
        \le \sum_{\j=1}^k  \frac{1}{(\j+1)^{2\gamma}}
        \le \zeta(2\gamma).
    \end{align*}
    For $\gamma \le \frac{1}{2}$, observe that
    \begin{align*}
         \sum_{\j=1}^k \frac{1}{(\j+1)^{2\gamma}}
         = \sum_{\j=2}^{k+1} \int_{\j-1}^\j \frac{1}{\j^{2\gamma}} \,d\tau
         \le \sum_{\j=2}^{k+1} \int_{\j-1}^\j \frac{1}{\tau^{2\gamma}} \,d\tau
         =
         \int_1^{k+1} \frac{1}{\tau^{2\gamma}} \,d\tau
         \le h(k+1),
    \end{align*}
    where $h(k+1) = \log(k+1)$, for $\gamma = \frac{1}{2}$, and $h(k+1) = (1-2\gamma)^{-1} (k+1)^{1-2\gamma}$ else.
    Similarly,
    \begin{align*}
         \sum_{\j=1}^k \frac{1}{(\j+1)^{\gamma}}
         = \sum_{\j=2}^{k+1} \int_{\j}^{\j+1} \frac{1}{\j^{\gamma}} \,d\tau
         \ge \int_2^{k+2} \frac{1}{\tau^{\gamma}} \,d\tau
         = (1-\gamma)^{-1} \big( (k+2)^{1-\gamma} - 2^{1-\gamma} \big).
    \end{align*}
    Thus, the sum in \eqref{eq:sum} can be bounded by the maximal value of
    \begin{align}
    \label{eq:f_bound}
        f \colon [0,\infty) \to \R,
        \qquad 
        f(t) := h(t+1) - \frac{(2-q)\delta }{2} (1-\gamma)^{-1} \Big( (t+2)^{1-\gamma} - 2^{1-\gamma} \Big).
    \end{align}
    Since
    \begin{align*}
        f(t) \le g(t) := h(t+2) - \frac{(2-q)\delta}{2} (1-\gamma)^{-1} \Big( (t+2)^{1-\gamma} - 2^{1-\gamma} \Big), 
    \end{align*}
    one has that $\max_{t \ge 0} f(t) \le \max_{t \ge 0} g(t)$, which can be determined by basic calculus to be
    \begin{align*}
        \max_{t \ge 0} g(t) &= 
        \begin{cases}
            2 \log\Big( \frac{2}{(2-q) \delta} \Big) - 2 + \sqrt{2} (2-q)\delta & \text{for } \gamma = \frac{1}{2} \\
            \frac{1}{1-2\gamma} \Big( \frac{2}{(2-q)\delta} \Big)^{\frac{1}{\gamma}-2} - \frac{(2-q)\delta }{2 (1-\gamma)} \Big( \frac{2}{(2-q)\delta} \Big)^{\frac{1}{\gamma}-1} + \frac{2^{1-\gamma} (2-q)\delta}{2 (1-\gamma)} & \text{for } \gamma < \frac{1}{2}
        \end{cases}
        \\
        &\le \begin{cases}
            2 \left( \log\big( \frac{2}{(2-q) \delta} \big) + (2-q)\delta \right) & \text{for } \gamma = \frac{1}{2} \\
            \frac{1}{1-2\gamma} \Big( \frac{2}{(2-q)\delta} \Big)^{\frac{1}{\gamma}-2} + 2 (2-q)\delta & \text{for } \gamma < \frac{1}{2}
        \end{cases}
    \end{align*}
    This completes the proof.
\end{proof}

\section{APPLICATION TO SPARSE RECOVERY}
\label{sec:NNLS_sparse}

Apart from applications that seek non-negative physical quantities, see Section \ref{sec:Introduction}, NNLS is also an attractive method for compressive sensing if the sparse ground truth is non-negative. Indeed, under certain assumptions on the measurement operator $\A$, solving \eqref{eq:NNLS} promotes the sparsity of its solution for free (no hyper-parameter tuning) and comes with improved robustness. This observation dates back to the seminal works  of \cite{donoho2005sparse,donoho2010counting}, which connect the topic to the theory of convex polytopes. (Interestingly, even before the modern theory of sparse recovery emerged in the form of compressive sensing, \cite{donoho1992maximum} exploited non-negativity to recover sparse objects.) Consequent works studied the uniqueness of positive solutions of underdetermined systems \citep{bruckstein2008uniqueness} and the generalization to low-rank solutions on the cone of positive-definite matrices \citep{wang2010unique}. Other works established conditions under which NNLS would succeed to retrieve a sparse vector even in a noisy setting \citep{kueng2017robust, shadmi2019sparse, slawski2011sparse,slawski2013non, meinshausen2013sign}. Two key concepts for such results are the \emph{null space property} \citep{foucart2013compressed}, 
and the \emph{$\mathcal{M}_{+}$ criterion} \citep{bruckstein2008uniqueness}. 

\begin{definition}[{\cite[Definition 4.17]{foucart2013compressed}}] \label{def:NSP:statement}
	\label{def_NSP} A matrix $\A \in \mathbb{R}^{\M \times \N}$ is said to satisfy the \emph{ robust null space property (NSP)} of order $s \in [N]$ with constants $0 <\rho<1$ and $\tau>0$ if for any set $S \subset [N]$ of cardinality $|S| \leq s$, it holds that $ \onenorm{\v_S} \leq \rho  \onenorm{\v_{S^c}} + \tau \|\A\v\|$, for all  $ \v \in \mathbb{C}^N$.
\end{definition}

\begin{definition}[{\citep{bruckstein2008uniqueness}}] 
Let $\A \in \mathbb{R}^{\M \times \N}$. We say $\A$ obeys the $\mathcal{M}_{+}$ criterion with vector $\u_{\mathcal{M}_{+}}$ if there exists $\u_{\mathcal{M}_{+}} \in \mathbb{R}^\M$ such that $\A^\top \u_{\mathcal{M}_{+}} >0$, i.e., if $\A$ admits a strictly-positive linear combination of its rows. 
\end{definition}
Note that the $\mathcal{M}_{+}$ criterion is a necessary condition for an underdetermined ($\M < \N$) system to admit a unique non-negative solution \cite[Theorem 5]{wang2010unique}. Examples of matrices $\A \in \R^{\M\times \N}$ satisfying the $\mathcal M_+$ criterion are (i) matrices with i.i.d.\ Bernoulli entries, (ii) matrices the columns of which can be written as independent 1-subgaussian random vectors \citep{shadmi2019sparse}, (iii) matrices the columns of which form an outwardly k-neighborly polytope \citep{donoho2005sparse}, and (iv) adjacency matrices of bipartite expander graphs \citep{wang2010unique}. A recent robustness result relying on null space property and $\mathcal M_+$ criterion is the following.

\begin{theorem}[{\cite[Theorem 4]{kueng2017robust}}]
\label{thm:Kueng}
Suppose that $\A\in \mathbb{R}^{\M \times \N}$ obeys the NSP of order $s \leq \N$ with constants $0 <\rho<1$ and $\tau>0$ and the $\mathcal{M}_{+}$ criterion with the vector $\u_{\mathcal{M}_{+}}$. Then $\A$ allows stable reconstruction of any non-negative $s$-sparse vector $\x_*$ from $\y = \A\x_* + \boldsymbol\epsilon$ via \eqref{eq:NNLS}. In particular, for any $1\leq p \leq q$, the unique solution $\x_+$ of \eqref{eq:NNLS} is guaranteed to obey
\begin{equation}
    \|\x_+ - \x_* \|_p \leq \frac{C}{s^{1-1/p}}\sigma_s(\x)_1+\frac{D}{s^{1/q-1/p}}(\| \u_{\mathcal{M}_{+}} \|_2+\tau)\|\boldsymbol\epsilon\|_2,
\end{equation}
where $C$ and $D$ only depend on $\rho$, the condition number of the diagonal matrix $\diag(\A^\top \u_{\mathcal{M}_{+}})$.
\end{theorem}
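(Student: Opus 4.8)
\emph{Proof idea.} The plan is to reduce robust NNLS recovery to the classical problem of robust recovery by $\ell_1$-minimization: the $\mathcal{M}_+$ criterion turns the combination of the non-negativity constraint with the small residual of the NNLS solution into control of a weighted $\ell_1$-norm, and the null space property then closes the argument via the standard stable-recovery machinery (in the spirit of \cite[Chapter~4]{foucart2013compressed}).

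First I would record two consequences of optimality. Since $\x_*\ge\0$ is feasible for \eqref{eq:NNLS} and $\|\A\x_*-\y\|_2=\|\boldsymbol\epsilon\|_2$, minimality of $\x_+$ gives $\|\A\x_+-\y\|_2\le\|\boldsymbol\epsilon\|_2$, hence $\|\A(\x_+-\x_*)\|_2\le 2\|\boldsymbol\epsilon\|_2$ by the triangle inequality. Next, set $\mathbf{t}:=\A^\top\u_{\mathcal{M}_+}>\0$ and write $\|\z\|_{\mathbf{t},1}:=\sum_i t_i|z_i|$ for the induced weighted $\ell_1$-norm; for any $\z\ge\0$ one has $\|\z\|_{\mathbf{t},1}=\inner{\mathbf{t}}{\z}=\inner{\u_{\mathcal{M}_+}}{\A\z}$, so
\[
    \|\x_+\|_{\mathbf{t},1}-\|\x_*\|_{\mathbf{t},1}
    =\inner{\u_{\mathcal{M}_+}}{\A(\x_+-\x_*)}
    \le\|\u_{\mathcal{M}_+}\|_2\,\|\A(\x_+-\x_*)\|_2
    \le 2\|\u_{\mathcal{M}_+}\|_2\,\|\boldsymbol\epsilon\|_2 .
\]
Thus $\x_+$ is, up to the additive error $2\|\u_{\mathcal{M}_+}\|_2\|\boldsymbol\epsilon\|_2$, a weighted-$\ell_1$-minimizer over the feasible set. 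Equivalently, after the diagonal rescaling $\mathbf{D}:=\diag(\mathbf{t})$ and $\widetilde\A:=\A\mathbf{D}^{-1}$ --- which is invertible because $\mathbf{t}>\0$ and satisfies $\widetilde\A^\top\u_{\mathcal{M}_+}=\1$ --- the vector $\mathbf{D}\x_+$ is an approximate plain-$\ell_1$-minimizer over $\{\w\ge\0:\|\widetilde\A\w-\y\|_2\le\|\boldsymbol\epsilon\|_2\}$, while $\mathbf{D}\x_*$ is feasible and $s$-sparse.

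Then I would run the standard argument. Put $\v:=\x_+-\x_*$ and let $S:=\operatorname{supp}(\x_*)$, so $|S|\le s$. Splitting the weighted-$\ell_1$ near-minimality across $S$ and $S^c$ and using the triangle inequality yields a cone-type bound $\|\v_{S^c}\|_{\mathbf{t},1}\le\|\v_S\|_{\mathbf{t},1}+2\|\x_{*,S^c}\|_{\mathbf{t},1}+2\|\u_{\mathcal{M}_+}\|_2\|\boldsymbol\epsilon\|_2$; passing between the weighted and the plain $\ell_1$-norm costs the factor $\kappa(\mathbf{D})=t_{\max}/t_{\min}$, so that $\|\v_{S^c}\|_1\le\kappa(\mathbf{D})\|\v_S\|_1+2\kappa(\mathbf{D})\sigma_s(\x_*)_{\ell_1}+2t_{\min}^{-1}\|\u_{\mathcal{M}_+}\|_2\|\boldsymbol\epsilon\|_2$ (the $\sigma_s$-term vanishing when $\x_*$ is exactly $s$-sparse but retained in the general statement). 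Feeding this into the robust NSP (in the $\ell_q$-strengthening needed for $\ell_p$-recovery) together with $\|\A\v\|_2\le 2\|\boldsymbol\epsilon\|_2$ closes the loop for $\|\v_S\|_q$ and $\|\v_{S^c}\|_1$; the usual norm-interpolation on compressible vectors then upgrades this to the claimed $\ell_p$-bound for all $1\le p\le q$, with $\sigma_s(\x_*)_{\ell_1}$ carrying the factor $s^{1/p-1}$ and the noise the factor $s^{1/p-1/q}(\|\u_{\mathcal{M}_+}\|_2+\tau)$. Finally, translating back via $\x_+-\x_*=\mathbf{D}^{-1}\v$ costs another factor $\|\mathbf{D}^{-1}\|_{\mathrm{op}}=t_{\min}^{-1}$, and all $\mathbf{D}$-dependent quantities collapse into constants $C,D$ depending only on $\rho,\tau$ and $\kappa(\diag(\A^\top\u_{\mathcal{M}_+}))$.

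The main obstacle is the bookkeeping around the diagonal rescaling: one has to check that the robust NSP is inherited by $\widetilde\A$ with usable constants --- the effective NSP constant becomes of order $\kappa(\mathbf{D})\rho$, so the argument tacitly needs $\kappa(\mathbf{D})\rho<1$, which is exactly why $C,D$ are permitted to degrade with $\kappa(\mathbf{D})$ --- and that the best-$s$-term error behaves correctly under $\mathbf{D}$. A secondary point is that the off-the-shelf $\ell_1$-recovery theorems must tolerate an \emph{approximate} minimizer, which is harmless since their proofs only use $\|\x^\#\|_1\le\|\x\|_1$, here replaced by $\|\mathbf{D}\x_+\|_1\le\|\mathbf{D}\x_*\|_1+2\|\u_{\mathcal{M}_+}\|_2\|\boldsymbol\epsilon\|_2$. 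One should also note that uniqueness of the NNLS minimizer follows from the same hypotheses (alternatively, the displayed bound holds verbatim for every element of $S_+$).
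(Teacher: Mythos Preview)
The paper does not prove this theorem: it is quoted verbatim as \cite[Theorem~4]{kueng2017robust} in the related-work section and used only as background, so there is no in-paper proof to compare against. That said, your sketch is the standard route taken in the original reference --- leverage optimality to bound the residual, use the $\mathcal{M}_+$ criterion to turn non-negativity plus small residual into approximate (weighted) $\ell_1$-minimality, then invoke the robust NSP machinery --- so your proposal is consistent with how the cited result is actually established.
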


Theorem \ref{thm:Kueng}, which was later generalized to arbitrary $\ell_p$-quasinorms \cite[Theorem 2]{shadmi2019sparse}, shows that if $\A$ behaves sufficiently well, the solution of \eqref{eq:NNLS} stably reconstructs any non-negative s-sparse vector from noisy measurements at least as well as conventional programs for sparse recovery. In particular, neither sparsity regularization nor parameter tuning is required. The program only relies on the geometry imposed by its constraints. The work of \cite{shadmi2019sparse} even showed that \eqref{eq:NNLS} outperforms Basis Pursuit Denoising in retrieving a sparse solution from noisy measurements. 
However, let us mention that measurement operators $\A$ appearing in applications normally do not satisfy the assumptions of Theorem \ref{thm:Kueng}. In such scenarios, an additional sparsity regularization is still needed when working with \eqref{eq:NNLS}. 

{In our proposed approach, we get sparse regularization as a by-product of reusing existing theory in our proofs. Indeed,} we finally observe that if $\x_0$ is chosen sufficiently close to zero, the limit of {the reparametrized} gradient flow is approximately minimizing a (weighted) $\ell_1$-norm among all possible solutions of \eqref{eq:NNLS}. 
The following theorem formalizes these claims.
Let us emphasize that a small initialization is only required in Theorem~\ref{theorem:L1_equivalence_positive_PartII} to obtain additional $\ell_1$-regularization. General instances of NNLS can be solved by {our} method with arbitrary positive initialization. Further note that in order to guarantee a similar additional regularization in the case of general measurements $\A$, the established techniques for solving \eqref{eq:NNLS} --- grouped as (i)-(iii) in the discussion in {Appendix \ref{sec:RelatedWork}} --- would require notable adjustments both in methodology and in theory.

\begin{theorem}
\label{theorem:L1_equivalence_positive_PartII}
    Let $\epsilon>0$. In addition to the assumptions of Theorem \ref{theorem:L1_equivalence_positive} assume that $\x_0 = \alpha \bo$. If
    \begin{align}\label{eq:alpha_epsilon_bound}
        \alpha
        &\leq 
        h(Q_+,\epsilon) \\
        &:=\begin{cases}
        \min\left(e^{-\frac{1}{2}}, \exp\left(\frac{1}{2} - \frac{Q_+^2 + \N e^{-1}}{2\epsilon}\right) \right) &\text{if }\L=2\\
        \left(\frac{2\epsilon}{\L(Q_+ +\N + \epsilon)}\right)^{\frac{1}{\L-2}}  &\text{if }\L>2
        \end{cases}, \notag 
    \end{align}
    where $Q_+ = \min_{\z \in S_+} \|\z\|_1$, then the $\ell_1$-norm of $\xprodinfty$ satisfies
    \begin{align*}
        \|\xprodinfty\|_1 - \min_{\z \in S_+} \|\z\|_1 \leq \epsilon.
    \end{align*}

\end{theorem}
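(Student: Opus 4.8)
The plan is to recognise the overparametrised flow as a mirror flow on the product variable $\xprod$, to combine the resulting conservation law with Theorem~\ref{theorem:L1_equivalence_positive} in order to pin down $\xprodinfty$ as a \emph{potential‑minimal} NNLS solution, and then to translate that potential into a weighted $\ell_1$‑norm in the small‑$\alpha$ regime. For Step~1, note that since all factors start at $\alpha\bo$ and $\Lover$ is invariant under permuting them, the trajectories remain identical and strictly positive for all finite $\t$, so $\xprod(\t)=\w(\t)^{\odot\L}>\0$ with $\x^{(k)}(\t)=\w(\t)$. A short computation then shows that, with $\mathbf r(\t):=\A\xprod(\t)-\y$,
\begin{equation*}
\frac{d}{d\t}\,\psi\!\big(\xprod(\t)\big)=-\L\,\A^{\top}\mathbf r(\t),\qquad\text{where }\psi'(x)=x^{2/\L-2}.
\end{equation*}
Let $\phi_\alpha$ be the strictly convex separable potential with $\nabla\phi_\alpha=\psi-\psi(\alpha^{\L})\bo$, so that $\nabla\phi_\alpha(\xprod(0))=\0$; integrating the identity above gives the conservation law $\nabla\phi_\alpha(\xprod(\t))\in\mathrm{range}(\A^{\top})$ for every $\t$. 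Invoking Theorem~\ref{theorem:L1_equivalence_positive} --- $\xprodinfty$ exists, lies in $S_+$, and $\A\xprod(\t)\to\y_+$, hence $S_+=\{\z\ge\0:\A\z=\y_+\}$ --- and passing the conservation law to the limit identifies $\xprodinfty$ with the Bregman projection of the initialisation onto $S_+$, i.e.
\begin{equation*}
\xprodinfty=\argmin_{\z\in S_+}\phi_\alpha(\z).
\end{equation*}
This is the implicit‑bias mechanism of \cite{vaskevicius2019implicit,li2021implicit,chou2021more}; the essential point provided by Theorem~\ref{theorem:L1_equivalence_positive} is that the argument now runs \emph{without} assuming $\A\x=\y$ solvable.

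Step~2 translates $\phi_\alpha$ into $\ell_1$. Evaluating $\phi_\alpha$ on $\R^{\N}_{+}$ gives $\phi_\alpha(\z)=c(\alpha)\,\|\z\|_1+R(\z)$, where the scale $c(\alpha)$ diverges as $\alpha\to0$ --- $c(\alpha)=2\log(1/\alpha)$ when $\L=2$, and $c(\alpha)=\tfrac{\L}{\L-2}\,\alpha^{2-\L}$ when $\L>2$ --- while the remainder $R$ is independent of $\alpha$: $R(\z)=\sum_n(z_n\log z_n-z_n)$ for $\L=2$, and $R(\z)=-\tfrac{\L^{2}}{2(\L-2)}\sum_n z_n^{2/\L}$ for $\L>2$. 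The elementary estimates $t\log t-t\ge-1$, $\sum_n t_n\log t_n\le\big(\sum_n t_n\big)\log\big(\sum_n t_n\big)$ and $\log t\le t-1$ (for $\L=2$), respectively $0\le t^{2/\L}\le 1+t$ (for $\L>2$), then bound $R$ above and below on any vector in terms of $\N$, $\L$ and its $\ell_1$‑norm.

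Step~3 is the comparison. Let $\z^{\star}\in\argmin_{\z\in S_+}\|\z\|_1$, so $\|\z^{\star}\|_1=Q_+$. Optimality of $\xprodinfty$ for $\phi_\alpha$ gives
\begin{equation*}
c(\alpha)\big(\|\xprodinfty\|_1-Q_+\big)\le R(\z^{\star})-R(\xprodinfty).
\end{equation*}
For $\L>2$, one bounds $-R(\xprodinfty)\le\tfrac{\L^{2}}{2(\L-2)}\big(\N+\|\xprodinfty\|_1\big)$ and $R(\z^{\star})\le 0$, extracts from the displayed inequality an a priori bound on $\|\xprodinfty\|_1$, substitutes it back, and simplifies; the requirement that the resulting right‑hand side be $\le\epsilon$ rearranges \emph{exactly} to $\alpha^{\L-2}\le\tfrac{2\epsilon}{\L(Q_++\N+\epsilon)}$, i.e.\ to \eqref{eq:alpha_epsilon_bound}. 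For $\L=2$, the analogous bounds $-R(\xprodinfty)\le\N$ and $R(\z^{\star})\le Q_+^{2}$ give $\|\xprodinfty\|_1-Q_+\le(Q_+^{2}+\N)/(2\log(1/\alpha))$; slightly sharpened versions of these two estimates, legitimate once $\alpha\le e^{-1/2}$ (which keeps $c(\alpha)$ bounded away from $0$), reproduce the stated threshold $h(Q_+,\epsilon)$.

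The main obstacle is Step~1 in the inconsistent regime: passing the conservation law to the limit when $\nabla\phi_\alpha$ diverges on the coordinates that $\xprodinfty$ sends to $0$ --- precisely those annihilated by complementary slackness in the KKT system of \eqref{eq:NNLS} --- and verifying that the surviving first‑order conditions are exactly the KKT conditions of $\min_{\z\in S_+}\phi_\alpha(\z)$. This is where the quantitative rate $\|\A\xprod(\t)-\y_+\|_2^2=\mathcal O(1/\t)$ from Theorem~\ref{theorem:L1_equivalence_positive} is used. Everything after that is elementary; only reproducing the precise constants in $h$ requires careful bookkeeping of the remainder terms, and any bound of the correct order already suffices for the qualitative statement.
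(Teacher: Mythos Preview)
Your Steps~2 and~3 are correct and are precisely what the paper (deferring to \cite{chou2021more}) does: once one knows that $\xprodinfty=\argmin_{\z\in S_+}\phi_\alpha(\z)$, the decomposition $\phi_\alpha=c(\alpha)\|\cdot\|_1+R$ together with the elementary scalar bounds you list reproduces the thresholds in \eqref{eq:alpha_epsilon_bound} exactly, including the bootstrapping on $\|\xprodinfty\|_1$ for $\L>2$ and the sharpening via $t\log t\ge -e^{-1}$ for $\L=2$.

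The real difference is your Step~1, and the ``main obstacle'' you flag is genuine for your route but does not arise in the paper's. You derive the conservation law $\nabla\phi_\alpha(\xprod(\t))\in\mathrm{range}(\A^{\top})$ and then want to pass it to the limit and match it against the KKT system of $\min_{\z\in S_+}\phi_\alpha(\z)$; as you note, $\nabla\phi_\alpha$ blows up on the coordinates where $\xprodinfty$ vanishes, so this passage needs a careful argument controlling the divergent directions via the complementary-slackness structure of $S_+$, and the $\mathcal O(1/\t)$ rate alone does not obviously deliver that control. The paper sidesteps this entirely by working with the Bregman divergence $D_F(\z_+,\xprod(\t))$ of the Legendre function $F$ with $\nabla F=\psi/\L$. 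The key observation is that
\[
\partial_\t D_F(\z_+,\xprod(\t))=-\big\langle \A\xprod(\t)-\y,\,\A\xprod(\t)-\A\z_+\big\rangle
\]
depends on $\z_+\in S_+$ only through $\A\z_+=\y_+$, hence is \emph{the same} for every $\z_+\in S_+$. Consequently $D_F(\z_+,\xprod(0))-D_F(\z_+,\xprodinfty)$ is constant over $\z_+\in S_+$; evaluating at $\z_+=\xprodinfty$ (where the second term vanishes) gives $D_F(\z_+,\xprod(0))\ge D_F(\xprodinfty,\xprod(0))$ for all $\z_+\in S_+$, which is exactly your desired $\xprodinfty\in\argmin_{\z\in S_+}\phi_\alpha(\z)$ up to an additive constant. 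No limit of $\nabla\phi_\alpha$ is ever taken, so the boundary issue disappears. If you want to keep your mirror-flow framing, this is the clean way to close the gap; otherwise your proposal would need an additional argument handling the diverging coordinates.
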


{We {have proved} Theorem \ref{theorem:L1_equivalence_positive_PartII} together with Theorem \ref{theorem:L1_equivalence_positive} in Section \ref{sec:MainProofs} {above}.} The additional $\ell_1$-regularization that is described in Theorem \ref{theorem:L1_equivalence_positive_PartII}, for small $\alpha > 0$, allows finding NNLS-solutions that are of lower complexity since there is a strong connection between small $\ell_1$-norm and effective sparsity. In particular, this allows stable reconstruction of (almost) non-negative ground truths if $\A$ is well-behaved, e.g., if $\A$ satisfies standard assumptions for sparse recovery like suitable robust null space and quotient properties \citep{foucart2013compressed}. 

\begin{definition}[{\cite[Definition 11.11]{foucart2013compressed}}]\label{def:l1_quotient} A measurement matrix $\A \in \mathbb{R}^{\M \times \N}$ is said to possess the $\ell_1$-quotient property with constant $d$ relative to the $\ell_2$-norm if, for all $\b \in \mathbb{R}^\M$, there exists $\u \in \mathbb{R}^\N$ with $\A\u=\b$ and $\| \u \|_1 \leq d \sqrt{s_{*}} \| \b \|_2$, where $s_{*}= \M / \ln(e\N/\M)$.
\end{definition}

Note that many types of matrices satisfy the robust NSP of order $s$ and the $\ell_1$-quotient property, e.g., Gaussian random matrices, randomly subsampled Fourier-matrices, and randomly subsampled circulant matrices. For instance, a properly scaled matrix with i.i.d.\ Gaussian entries satisfies both properties with high probability if $\M \ge C s \log(e\N/s)$ and $\M \le \N/2$, where the constant $C>0$ only depends on the NSP parameters $\rho$ and $\tau$ \citep{foucart2013compressed}.
Combining Theorem \ref{theorem:L1_equivalence_positive_PartII} and Theorem~3 of \cite{chou2021more}, the following stable recovery result can be derived.

\begin{theorem}[Stability]\label{theorem:robustness}
Let $\A \in \R^{\N \times \M}$ be a matrix satisfying the $\ell_2$-robust null space property with constants $0 \leq \rho < 1$ and $\tau > 0$ of order $s := c \M/\log(e\N/\M)$ and the $\ell_1$-quotient property with respect to the $\ell_2$-norm with constant $d > 0$.

For $\x_* \in \R^N$ and $\y = \A \x_*$, {recall $\y_+$ and $C_+$ from \eqref{eq:CPlus}}. Decompose $\x_*$ into
\begin{equation}
    \x_* = \x_+ - \x_-
\end{equation}
where $\A\x_+ = \y_+$. For $\epsilon > 0$ assume that $\alpha > 0$ satisfies
\begin{equation*}
\alpha \leq h(\|\x_+\|_1,\epsilon)
\end{equation*}
for $h$ defined in \eqref{eq:alpha_epsilon_bound}. Then the limit $\xprodinfty$ defined in Theorems \ref{theorem:L1_equivalence_positive} and \ref{theorem:L1_equivalence_positive_PartII} yields reconstruction error 
\begin{equation}
    \|\xprodinfty - \x_*\|_2 \leq  \frac{C}{\sqrt{\s}}(2 \sigma_s(\x_+)_{\ell_1} + \epsilon) + \|\x_-\|_2.
\end{equation}
The constants $C,C'>0$ only depend on $\rho, \tau, c, d$.
\end{theorem}

As can be seen from  Theorem \ref{theorem:robustness}, our approach to solving \eqref{eq:NNLS} is stable with respect to negative entries of the ground truth, i.e., the reconstruction error depends on the sparsity of the positive part $\x_+$ of $\x_*$ and the magnitude of the negative part $\x_-$ of $\x_*$. The experiments we perform in Section \ref{sec:NumericsStability} suggest that the established solvers for \eqref{eq:NNLS} are less stable under such perturbations.

\begin{remark} 
\label{rem:WeightedL1}
The very restrictive form of $\x_0 = \alpha \bo$ in Theorem \ref{theorem:L1_equivalence_positive_PartII} is only required to get an implicit $\ell_1$-bias, which is a classical regularizer for sparse recovery \citep{foucart2013compressed}. By changing the initialization, one can also achieve other biases like weighted $\ell_1$-norms. Below we show that, for $L=2$ and any $\w \in (0,1]^\N$ with $\| \w \|_\infty = 1$, the initialization vector $\x_0$ defined as
\begin{align*}
    \x_0 = e^{-\frac{1}{2} (\1 + \theta\w)}
\end{align*}
will yield an approximate $\ell_{\w,1}$-bias in the limit of gradient flow, where $\| \z \|_{\w,1} = \| \z \odot \w \|_1$ and $\theta > 0$ has to be chosen sufficiently large depending on the aimed for accuracy. Weighted $\ell_1$-norms have been used in various applications, e.g., polynomial interpolation or sparse polynomial chaos approximation
\citep{rauhut2016interpolation}, \citep{peng2014weighted}.
\end{remark}

The statement in Remark \ref{rem:WeightedL1} can be deduced from Theorem \ref{thm:WeightedL1} below by noticing that, for $L=2$, the initialization vector $\x_0$ defined as
\begin{align*}
    (\xe_0)_i = e^{-\frac{1}{2} (1 + \theta \we_i)}
\end{align*}
satisfies $\w = \frac{(-1-\log(\xprod(0)))}{\max_{\n\in[\N]}(-1-\log((\xprode(0))_n)}$ (recall that $\xprod(0) = \x_0^{\odot 2}$ in this case). The scaling $\theta > 0$, which has no effect on $\w$, only has to be chosen sufficiently large to guarantee $\x_0 \le h(Q_+,\epsilon)$.

\begin{theorem}
\label{thm:WeightedL1}
    Let $\varepsilon > 0$ and $L = 2$. Under the assumptions of Theorem \ref{theorem:L1_equivalence_positive_PartII} with $\x_0\leq \alpha\leq h(Q_+,\epsilon)$, we get that 
    \begin{align*}
        \|\xprodinfty\|_{\w,1} - \min_{\z \in S_+}\|\z\|_{\w,1}
        \leq \epsilon,
    \end{align*}
    where $\|\z\|_{\w,1} = \|\z\odot\w\|_1$ denotes the weighted $\ell_1$-norm for
    \begin{align*}
        \w = \delta(-\bo - \log(\xprod_0))
    \end{align*}
    and
    \begin{align*}
        \delta = \frac{1}{\max_{\n\in[\N]}(-1-\log((\xprode_0)_n)}.
    \end{align*}
\end{theorem}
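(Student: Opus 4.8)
The plan is to reduce the claim to the variational characterization of the limit that Theorem~\ref{theorem:Bregman_positive} already supplies, and then to re-run (with one modification) the estimates behind the $\ell_1$-bound of Theorem~\ref{theorem:L1_equivalence_positive_merged}, with the weighted norm $\|\cdot\|_{\w,1}$ in place of $\|\cdot\|_1$. Since $\L=2$ and all factors are initialized at $\x_0$, Lemma~\ref{lemma:identical_initialization} collapses the flow to $\x'(\t)=-\nabla\mathcal{L}(\x)$, and Theorem~\ref{theorem:Bregman_positive} gives that $\xprodinfty$ exists, lies in $S_+$, and equals $\argmin_{\z\in S_+} g_{\xprod(0)}(\z)$ with $g_{\xprod(0)}(\z)=\langle\z,\log(\z)-\bo-\log(\xprod(0))\rangle$ and $\xprod(0)=\x_0^{\odot 2}$. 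The first move is to plug in the definition $\w=\delta(-\bo-\log(\xprod_0))$, i.e.\ $-\bo-\log(\xprod_0)=\w/\delta$ entry-wise. Since $h(Q_+,\epsilon)<e^{-1/2}$ forces $\x_0<e^{-1/2}\bo$, we get $(\xprode_0)_n<e^{-1}$ and hence $\w>\0$; together with $\z\ge\0$ this yields
\[
    g_{\xprod(0)}(\z)=\langle\z,\log(\z)\rangle+\tfrac1\delta\langle\z,\w\rangle=\langle\z,\log(\z)\rangle+\tfrac1\delta\|\z\|_{\w,1},
\]
so $\xprodinfty$ minimizes $G(\z):=\langle\z,\log(\z)\rangle+\tfrac1\delta\|\z\|_{\w,1}$ over $S_+$. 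This is structurally identical to the reduced $\ell_1$-problem, where $\xprod_0=\alpha^2\bo$ makes $G(\z)=\langle\z,\log(\z)\rangle-(1+2\log\alpha)\|\z\|_1$, with the scalar $-1-2\log\alpha$ now replaced by $1/\delta$.

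With this in hand the estimate is the one from \cite{chou2021more}. Fix $\z_0\in\argmin_{\z\in S_+}\|\z\|_{\w,1}$ (the minimum is attained because $\w>\0$ makes $\|\cdot\|_{\w,1}$ coercive on the closed set $S_+$) and put $m:=\|\z_0\|_{\w,1}$. From $G(\xprodinfty)\le G(\z_0)$, rearranging gives
\[
    \|\xprodinfty\|_{\w,1}-m\ \le\ \delta\big(\langle\z_0,\log(\z_0)\rangle-\langle\xprodinfty,\log(\xprodinfty)\rangle\big)\ \le\ \delta\big(\langle\z_0,\log(\z_0)\rangle+\N e^{-1}\big),
\]
using $t\log t\ge-e^{-1}$ for $t\ge0$ so that $\langle\xprodinfty,\log(\xprodinfty)\rangle\ge-\N e^{-1}$. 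One then bounds $\langle\z_0,\log(\z_0)\rangle\le Q_+^2$ via $t\log t\le t^2$ and a bound on $\|\z_0\|_1$ (the delicate point, discussed below), so that $\|\xprodinfty\|_{\w,1}-m\le\delta(Q_+^2+\N e^{-1})$. Finally, from the definition of $\delta$ and $\xprod_0=\x_0^{\odot 2}\le\alpha^2\bo$ one has $1/\delta=-1-\log(\min_n(\xprode_0)_n)\ge-1-2\log\alpha$, and the hypothesis $\alpha\le h(Q_+,\epsilon)$ from \eqref{eq:alpha_epsilon_bound} is precisely $-1-2\log\alpha\ge(Q_+^2+\N e^{-1})/\epsilon$; hence the right-hand side above is $\le\epsilon$, which is the claim. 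Remark~\ref{rem:WeightedL1} then follows by checking that $\x_0=e^{-\frac12(\bo+\theta\w)}$ produces exactly the stated $\w$ (the scaling $\theta$ cancels in the normalization of $\w$) and that $\theta$ can be chosen large enough to force $\x_0\le h(Q_+,\epsilon)$.

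The step that is genuinely not a transcription of the $\ell_1$-argument, and which I expect to be the main obstacle, is the bound $\langle\z_0,\log(\z_0)\rangle\le Q_+^2$. In the $\ell_1$-case the comparison point \emph{is} the $\ell_1$-minimizer, so $\|\z_0\|_1=Q_+$ and $\langle\z_0,\log(\z_0)\rangle\le\|\z_0\|_2^2\le\|\z_0\|_1^2=Q_+^2$ is immediate; but the weighted-$\ell_1$-minimizer need not have small \emph{unweighted} $\ell_1$-norm, so this requires a separate argument — for instance, selecting $\z_0$ among the weighted minimizers so as to additionally minimize its $\ell_1$-norm (or $\langle\z,\log(\z)\rangle$ directly) and then controlling that quantity by $Q_+$. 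Everything else — existence and $S_+$-membership of $\xprodinfty$, the rewriting of $g_{\xprod(0)}$, and the two convexity inequalities — is already contained in Theorem~\ref{theorem:Bregman_positive} or is elementary.
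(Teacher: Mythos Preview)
Your approach is essentially identical to the paper's: rewrite $g_{\xprod(0)}$ so that the variational characterization from Theorem~\ref{theorem:Bregman_positive} compares $\|\xprodinfty\|_{\w,1}$ against $\|\z\|_{\w,1}$ up to the entropy term, then bound the entropy via $-e^{-1}\le t\log t\le t^2$ and absorb the remainder using $\delta\le 1/(-1-2\log\alpha)$ and $\alpha\le h(Q_+,\epsilon)$.

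The gap you flag is genuine, and the paper's own proof does not close it either. The paper establishes, for arbitrary $\z\in S_+$,
\[
\|\xprodinfty\|_{\w,1}-\|\z\|_{\w,1}\le \delta\big(\|\z\|_1^2+\N e^{-1}\big),
\]
then asserts that the right-hand side is $\le\epsilon$ ``because $\alpha\le h(Q_+,\epsilon)$'' and finishes with ``take the minimum over all $\z\in S_+$.'' But $\alpha\le h(Q_+,\epsilon)$ only yields $\delta(Q_+^2+\N e^{-1})\le\epsilon$, so the bound $\le\epsilon$ requires $\|\z\|_1\le Q_+$; since $Q_+=\min_{\z\in S_+}\|\z\|_1$, this forces $\z$ to be an (unweighted) $\ell_1$-minimizer, not a weighted one. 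Plugging such a $\z$ in gives $\|\xprodinfty\|_{\w,1}\le\|\z\|_{\w,1}+\epsilon$ for an $\ell_1$-minimizer $\z$, which is in general strictly weaker than $\|\xprodinfty\|_{\w,1}\le\min_{\z\in S_+}\|\z\|_{\w,1}+\epsilon$. So your concern is well-founded: neither your argument nor the paper's, as written, controls the unweighted $\ell_1$-norm of the weighted minimizer, and without that the final inequality does not follow from the displayed chain.
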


\begin{proof}
For $\L=2$, we obtain from \eqref{eq:optimal_x_positive} that
\begin{align*}
    \xprodinfty \in
    &\argmin_{\z \in S_+}D_{\F}(\z,\xprod(0))
    = \langle \z, \log(\z) -\bo - \log(\xprod_0) \rangle,
\end{align*}
where we used the Equation below (2.10) of \cite{chou2021more} together with the fact that $D_F$ based on $F$ in \eqref{eq:Breman_positive} and $D_F$ of \cite{chou2021more} are the same, cf.\ our discussion below \eqref{eq:Breman_positive}. 
Hence
\begin{align*}
    \langle \xprodinfty, \log(\xprodinfty) -\bo - \log(\xprod_0) \rangle
    \leq\langle \z, \log(\z) -\bo - \log(\xprod_0) \rangle,
\end{align*}
which may be re-stated as
\begin{align*}
    \langle \xprodinfty-\z, -\bo - \log(\xprod_0) \rangle
    \leq\langle \z, \log(\z) \rangle - \langle \xprodinfty, \log(\xprodinfty)  \rangle.
\end{align*}
Let $\alpha\in(0,e^{-\frac{1}{2}})$ and assume that $\alpha\geq\x_0 >0$ (so that $\log(\xprod_0)<-1$). Note that $1\geq\w\geq 0$ by assumption. Denote
\begin{equation*}
    \|\z\|_{\w,1} = \|\z\odot\w\|_1
\end{equation*}
to be the weighted $\ell_1$-norm. By non-negativity of $\w,\xprodinfty,\z$, we get that
\begin{align*}
    \|\xprodinfty\|_{\w,1} - \|\z\|_{\w,1}
    \leq\delta(\langle \z, \log(\z) \rangle - \langle \xprodinfty, \log(\xprodinfty)  \rangle).
\end{align*}
Since $\xi^2 \geq \xi\log(\xi) \geq -e^{-1}$ for $\xi\geq 0$,
\begin{align*}
    \|\xprodinfty\|_{\w,1} - \|\z\|_{\w,1}
    &\leq\delta(\|\z\|_2^2 + \N e^{-1})
    \leq\delta(\|\z\|_1^2 + \N e^{-1})
    \leq \epsilon
\end{align*}
because $\alpha\leq h(Q_+,\epsilon)$ and $\delta\leq -\frac{1}{1+2\log(\alpha)}$. Take the minimum overall $\z\in S_+$, and we get our conclusion.
\end{proof}

\section{DETAILS FOR ACCELERATED GRADIENT METHODS}
\label{sec:AccelerationDiscretization}

To transform~\eqref{accelerated-gd-continuous} in an optimization method, we perform a change of variables.
In the case where $L = 2$, $q = 1$ and the new variable is $\ZETA = \log(\halfstep)$, so that the $\halfstep$ equation becomes equivalent to $\ZETA' = - \frac{t}{2} \nabla \LossBasic(\xprod(t))$.
Using an implicit step for $\xprod$ and an explicit step for $\ZETA$ yields the following update scheme:
\begin{equation}
    \label{eq:update_ACC}
    \begin{split}
        \xprod_{k+1} & = \frac{\xprod_k + 2/(k+2) e^{\ZETA_k}}{1 + 2/(k+2)}, \\
        \ZETA_{k+1} & = \ZETA_k - \eta \frac{k+2}{2} \nabla \LossBasic(\xprod).
    \end{split}
\end{equation}

Nesterov's acceleration for the dynamics $\x'(t) = - \nabla \LossPoly(\x)$ is an usual one:
\begin{equation}
    \label{eq:update_Nest}
    \begin{split}
        {\bf y}_{k+1}  & = \x_k - \eta \nabla \LossPoly(\x_k), \\
        \x_{k+1} & = {\bf y}_{k+1} + \frac{k}{k+3}({\bf y}_{k+1} - {\bf y}_k). \\
    \end{split}
\end{equation}

\section{ADDITIONAL NUMERICAL EXPERIMENTS}
\label{sec:AdditionalNumerics}

In this section, we provide additional empirical evidence for our theoretical claims. In particular, (i.) we show, for Gaussian signals, the stability of our approach with respect to negative entries, as similarly done in Section \ref{sec:NumericsStability}, (ii.) we illustrate the performance of our method in an over-determined large-scale NNLS problem, and (iii.) we compare our method to projected gradient descent (\textbf{PGD}) \citep{polyak2015projected} in terms of the number of iterations, convergence rate, and running time since it is known that \textbf{PGD} converges linearly to the global minimizer \citep{attouch2013convergence, polyak2015projected} and it is a numerically efficient method due to the fast calculation of the projection step.

\subsection{Experimental setup}

The results of this paper were obtained on a desktop computer running Python 3.9 on an Intel Core i5-7500 CPU with 8GB of RAM.

\subsection{NNLS Stability with Negative Entries}
\label{sec:NumericsStability_gaussian}

Here, we continue the comparison of Section~\ref{sec:NumericsStability} and analyze the reconstruction error of several NNLS algorithms with respect to the magnitude of the negative component.

\paragraph{Gaussian signals}
Let $\A\in\mathbb{R}^{M\times N}$ be a random Gaussian matrix, where $\M=30$ and $\N=50$.
We pick a $3$-sparse non-negative vector $\x_+ \in \R_{\geq 0}^\N$ at random.
We, furthermore, define a noise vector $\x_- \in \R_{\geq 0}^\N$ that is $0$ on the support of $\x_+$ and has positive Gaussian entries everywhere else.
For $q \in [0,1]$, we scale $\x_+,\x_-$ such that
\begin{equation}\label{eq:q_level_bis}
    \|\x_+\|_2^2 = 1-q
    \quad \text{and} \quad
    \|\x_-\|_2^2 = q.
\end{equation}
The perturbed signal is then given by $\x = \x_+ - \x_-$, and $q$ regulates the negative corruption.
We regard the copy of $\x_+$ scaled to $\ell_2$-norm $(1-q)$ as ground truth and, by abuse of notation, also refer to it as $\x_+$.
The corresponding measurements are given as $\y = \A\x = \A(\x_+ - \x_-)$.

Figures \ref{fig:NoiseComparison1}, \ref{fig:NoiseComparison2} and \ref{fig:NoiseComparison3} compare the reconstruction error $\| \hat\x - \x_+ \|_2$ of \textbf{HL-NNLS}, \textbf{TNT-NN}, \textbf{GD-$3$L}, and \textbf{SGD-$3$L} over various choices of $q$. {The results support our claim that, when applying NNLS to sparse recovery, our approach comes with improved stability against negative perturbations of the ground-truth.}

\begin{figure}[h!]
    \centering
    \begin{subfigure}[b]{0.45\textwidth}
        \centering
        \includegraphics[width = 0.7\textwidth]{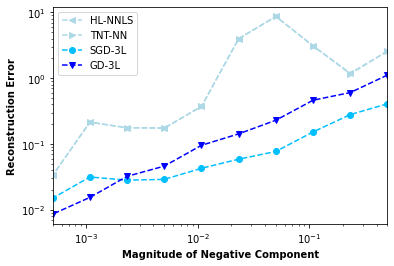}
        \subcaption{Gaussian.}
        \label{fig:NoiseComparison1}
    \end{subfigure} \\
    \begin{subfigure}[b]{0.45\textwidth}
        \centering
        \includegraphics[width = 0.7\textwidth]{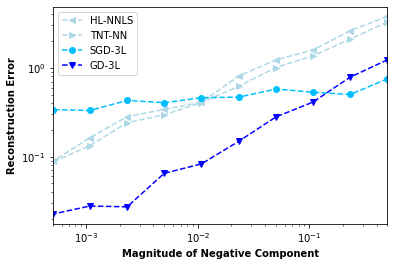}
        \subcaption{MNIST.}
        \label{fig:NoiseComparison2}
    \end{subfigure} \quad 
    \begin{subfigure}[b]{0.45\textwidth}
        \centering
        \includegraphics[width = 0.7\textwidth]{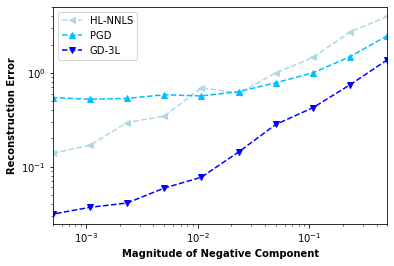}
        \subcaption{MNIST with PGD.}
        \label{fig:NoiseComparison3}
    \end{subfigure}
    \caption{Comparison of stability with respect to negative entries, see Section \ref{sec:NumericsStability_gaussian}.}
\end{figure}

\subsection{Comparison of GD variants for solving NNLS}

In this section, we present experiments comparing various gradient descent (GD) variants for solving non-negative least squares (NNLS) problems. We examine the performance of these methods on both dense and sparse problems, focusing on convergence behavior and support identification. We consider two scenarios:
\begin{enumerate}
\item Dense case: A $512\times512$ linear system with a dense vector with sparsity equal to 512.
\item Sparse case: A $512\times1024$ linear system with an 8-sparse vector $\x \in \mathbb{R}^{512}$.
\end{enumerate}
For both cases, we use a vanilla step size of $3.16\times10^{-1}$. We compare the following methods: Vanilla \textbf{GD-$2$L} with constant stepsize, ACC (our accelerated method based on Theorem \ref{thm:Accelerated}), Nesterov's accelerated gradient descent, Restarted Nesterov's method, \textbf{GD-$2$L} with the Barzilai-Borwein step size Barzilai-Borwein (BB) stepsize 
$$\eta = \frac{\|\x_t-\x_{t-1} \|_2^2}{\|\A(\x_t-\x_{t-1}) \|_2^2},$$ 
\citep[e.g.,][]{barzilai1988two}, and Projected Gradient Descent (\textbf{PGD}), which proceeds as follows.

\fbox{
\begin{minipage}[c]{0.9\linewidth}
    \textbf{PGD:} Initialize with $\x_0 \geq 0$, e.g., $\x_0 = 0$. And for $k = 0, 1, 2, \ldots$ until convergence:
\begin{itemize}
    \item[(i)] Compute the gradient: $\nabla f(\x_k) = \A^T(\A\x_k - \b)$.
    \item[(ii)] Choose step size.
    \item[(iii)] Update: $\x_{k+1} = P_+[\x_k - \alpha_k \nabla f(\x_k)]$, where $P_+$ is the projection onto the non-negative orthant $P_+[\x]_i = \max(0, \x_i)$.
\end{itemize}
\end{minipage}
}

We also examine the effect of different initialization scales, ranging from $\alpha = 10^{-9}$ to $10^{-1}$.

\begin{figure}[ht]
    \centering
    \includegraphics[width=0.9\textwidth]{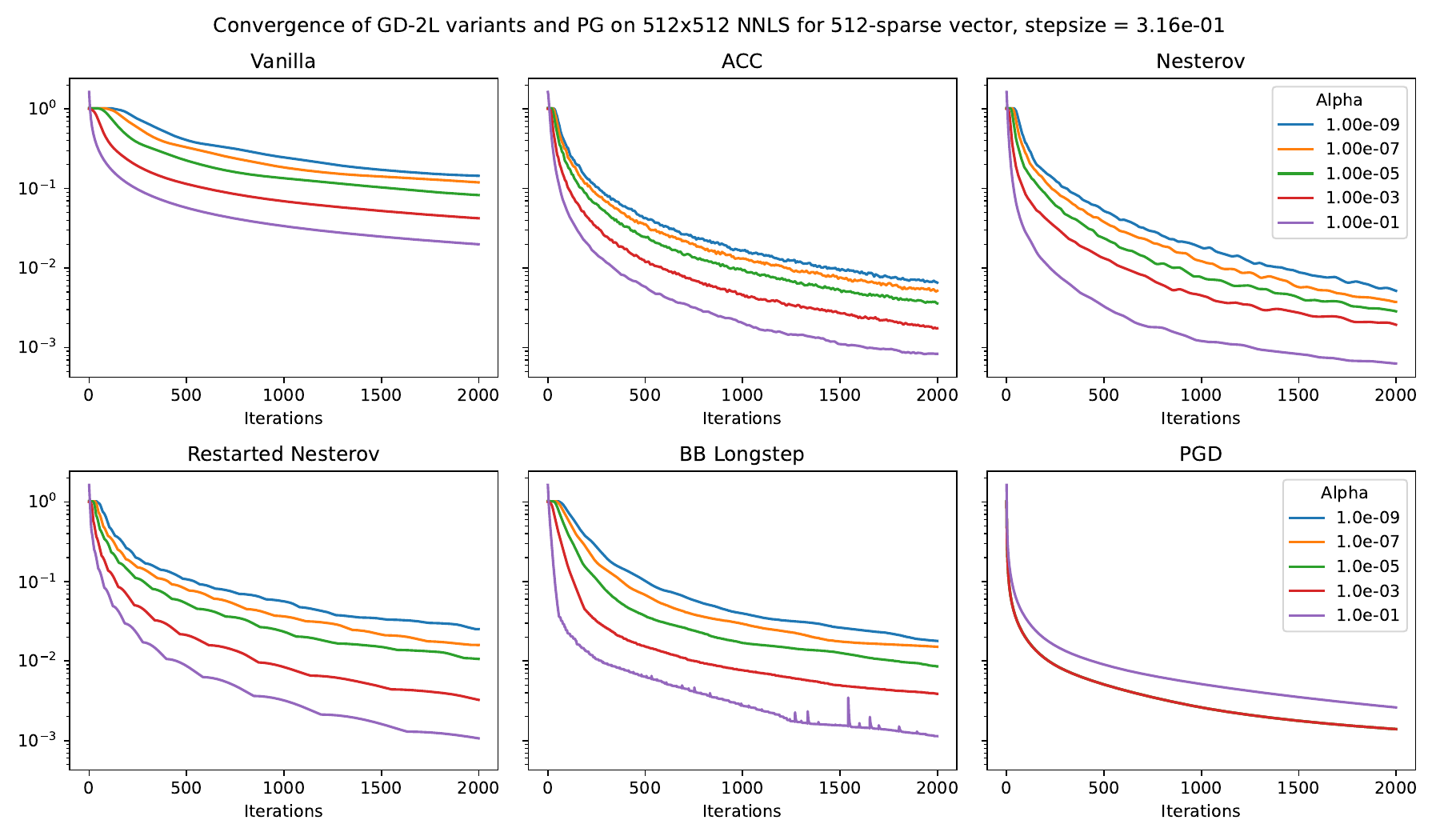}
    \caption{Error $\norm{\A\xprod - \b}{2}$ for GD-2L variants and \textbf{PGD} in the dense case.}
    \label{fig:Convergence_Support_GD2L_PGD_denseCase}
\end{figure}

Figure \ref{fig:Convergence_Support_GD2L_PGD_denseCase} shows the convergence behavior of various \textbf{GD-$2$L} variants and \textbf{PGD} for the dense case. The y-axis represents the error $\|\A\x - \b\|_2$, and the x-axis shows the number of iterations.
We see that all methods decreased the error below $10^{-1}$ provided a sufficient number of iterations. The ACC and Nesterov variants demonstrate improved convergence rates compared to vanilla \textbf{GD-$2$L}, while the BB Longstep method shows competitive performance, especially in later iterations. Notably, the initialization scale ($\alpha$) has a significant impact on convergence speed, with larger $\alpha$ generally leading to faster convergence.

Figure \ref{fig:Convergence_Support_GD2L_PGD} presents two aspects of the sparse case: convergence behavior, cf. Subfigure \ref{fig:Convergence_Support_GD2L_PGD} (a) and support identification, cf. Subfigure \ref{fig:Convergence_Support_GD2L_PGD}(b).
We observe that the overall trends are similar to the dense case but with faster convergence due to the problem's sparsity. \textbf{PGD} again shows rapid initial progress, while the accelerated methods (ACC, Nesterov, Nesterov with a restarting strategy that improves its convergence, e.g., \citep{roulet2017sharpness}) demonstrate improved convergence rates over vanilla \textbf{GD-$2$L}. Unlike the dense case, smaller initialization scales generally lead to faster convergence, especially for the accelerated methods.

Figure \ref{fig:Convergence_Support_GD2L_PGD} illustrates how well each method identifies the true support of the sparse solution over iterations. We find that despite being a fast method, \textbf{PGD} struggles to achieve accurate support identification. In contrast, all variants of the overparametrized GD perform well in this task.

The experiments demonstrate that the variants of overparametrized \textbf{GD-$2$L}, particularly the accelerated methods (ACC, Nesterov, and Restarted Nesterov), offer significant advantages in solving NNLS problems. These methods consistently show improved convergence rates compared to vanilla \textbf{GD-$2$L}, especially in the sparse case. Moreover, they excel in support identification, a crucial aspect in many applications requiring sparse solutions.

The overparametrized GD variants demonstrate remarkable flexibility across different problem settings. They perform well in both dense and sparse scenarios, adapting effectively to various initialization scales. This adaptability, combined with their strong performance in support identification, makes them particularly suitable for a wide range of NNLS applications.

The BB Longstep variant also demonstrates competitive performance, especially in later iterations, offering another valuable option within the overparametrized GD framework. Furthermore, the ACC method, based on our theoretical results in Theorem \ref{thm:Accelerated}, shows promising performance, validating the practical relevance of our theoretical contributions. 

\begin{figure}[ht]
    \centering
    \begin{subfigure}[b]{\textwidth}
        \centering
        \includegraphics[width=0.9\textwidth]{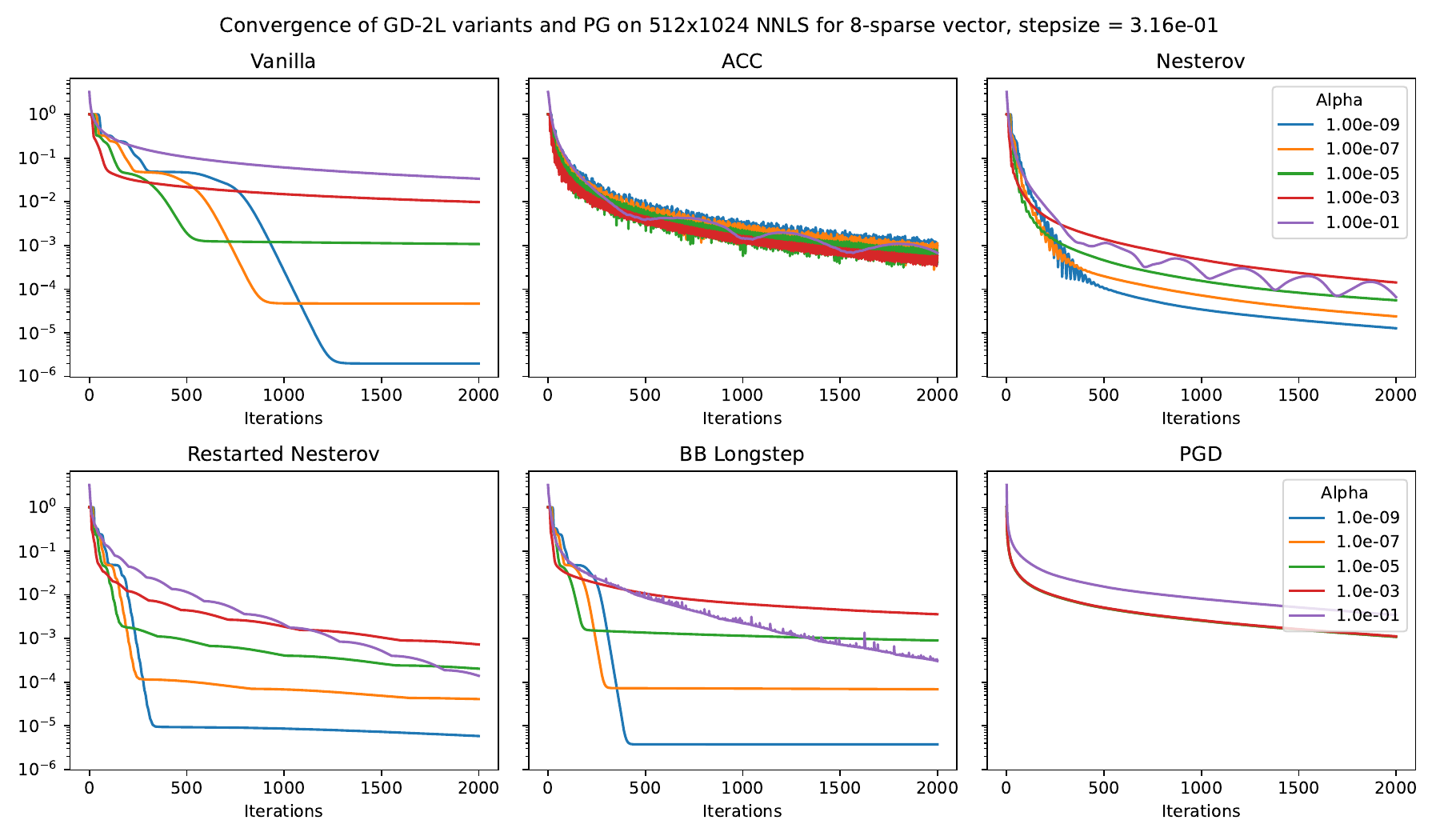}
	    \subcaption{Error $\norm{\A\xprod - \b}{2}$ along iterations.}
    \end{subfigure}

    \begin{subfigure}[b]{\textwidth}
        \centering
        \includegraphics[width=0.9\textwidth]{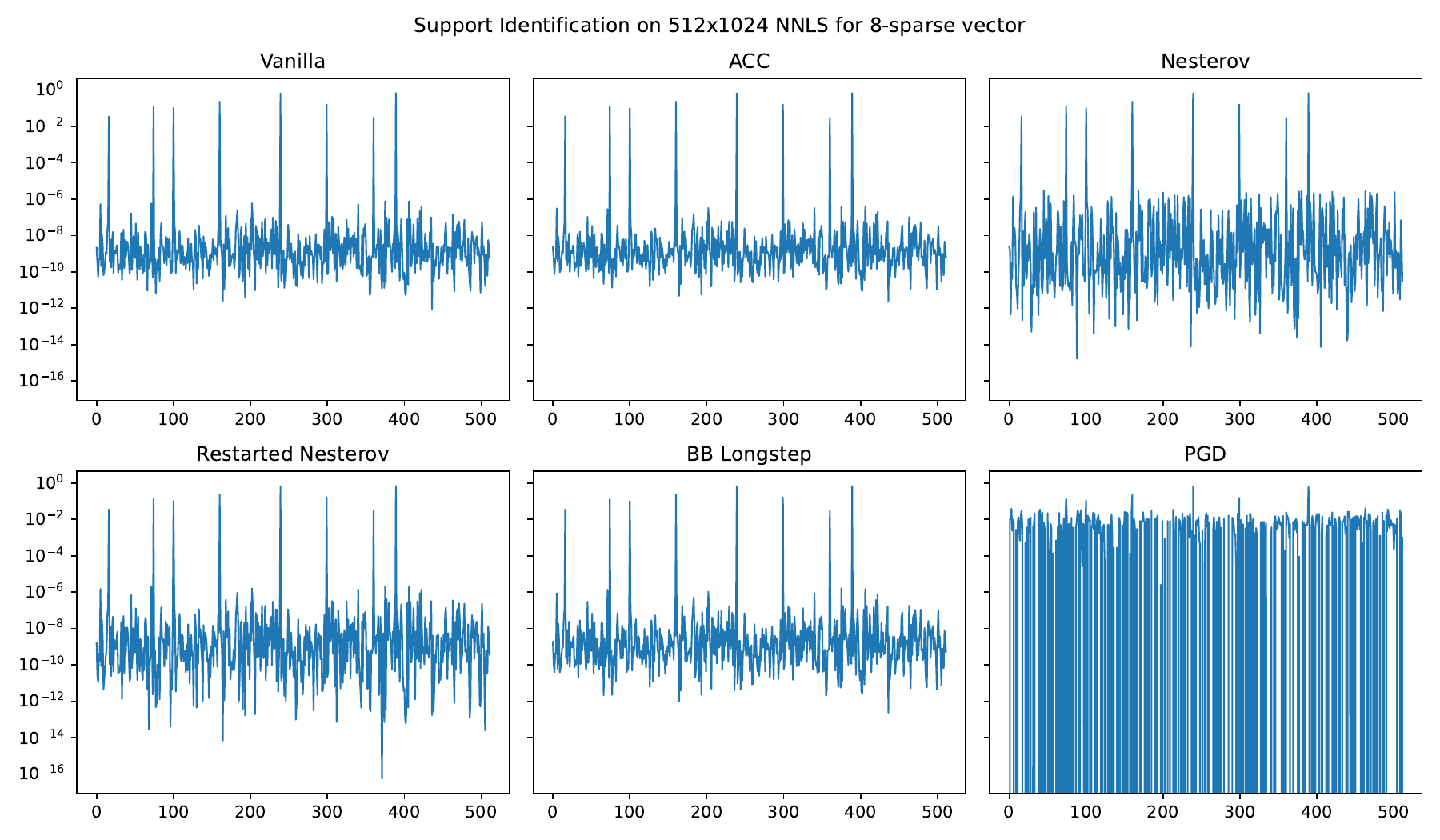}
	    \subcaption{Support identification}
    \end{subfigure}
    \caption{Comparison of \textbf{GD-$2$L} variants and \textbf{PGD} in the sparse case.}
    \label{fig:Convergence_Support_GD2L_PGD}
\end{figure}

\subsection{Experiments on large linear systems}
\label{sec:LargeScaleNNLS}

In this experiment, cf.\ Figure \ref{fig:LargeScale}, we illustrate the performance of our method for two large-scale NNLS problems.
The first, in Figure~\ref{fig:2048x1024}, treats an overdetermined setting, where the matrix $\A \in \R^{2048\times 1024}$ is standard Gaussian and, for a Gaussian random vector $\x \in \R^{1024}$, $\y$ is created as a perturbed version of $\A\x$ such that $\y$ is not in the range of $\A$.
We take $\x_0 = \boldsymbol{1}$ as a generic initialization.
Figure \ref{fig:2048x1024} shows the error $\| \A\x^{\odot L} - \y \|_2$ over the iterations of gradient descent.
As predicted by Theorem \ref{theorem:L1_equivalence_positive}, our method converges to a solution that solves the NNLS problem (compare the error to the benchmark given by the Lawson-Hanson algorithm).

\begin{figure}[ht]
    \centering
    \begin{subfigure}[b]{0.45\textwidth}
        \centering
        \includegraphics[width=\textwidth]{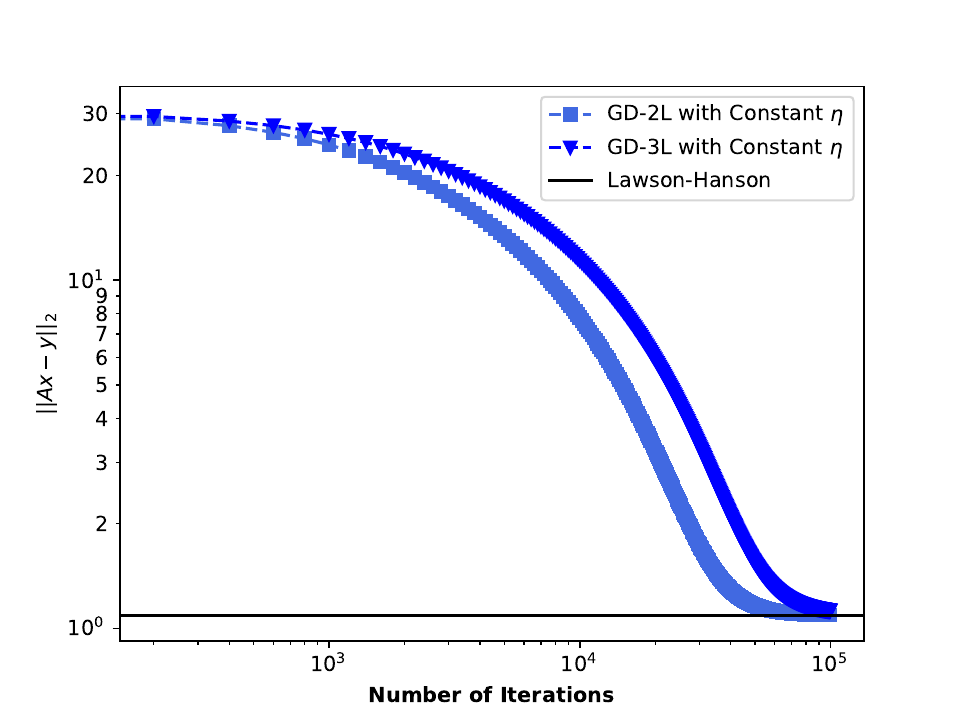}
        \subcaption{Dense setting.}
        \label{fig:2048x1024}
    \end{subfigure}
    \hfill
    \begin{subfigure}[b]{0.45\textwidth}
        \centering
        \includegraphics[width=1\textwidth]{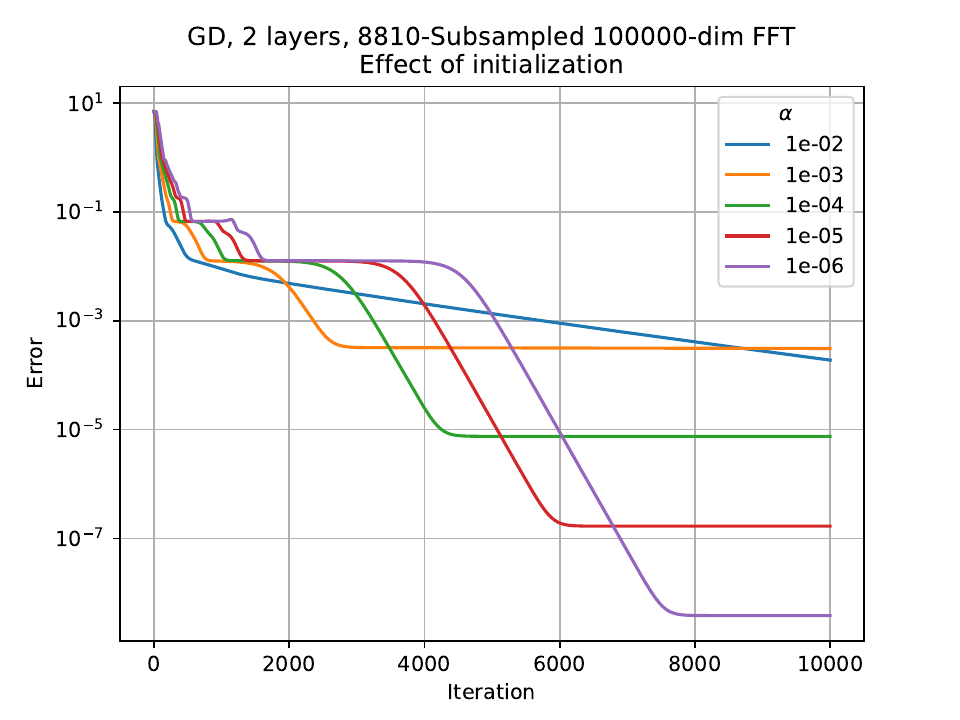}
	    \subcaption{Subsampled FFT setting.}
        \label{fig:LargeFFT}
    \end{subfigure}
    \caption{Large-scale NNLS problems, see Section \ref{sec:LargeScaleNNLS}.}
    \label{fig:LargeScale}
\end{figure}

A second experiment deals with a subsampled FFT, using $8\,810$ random rows of a $100\,000$-dimensional FFT to build the matrix $\A$.
A ground-truth $\x^*$ with $50$ positive entries is chosen, and again we set $\b = \A\x^*$.
We use two layers, set a constant stepsize of $0.2$, and vary the initialization $\x_0 = \alpha \1$ for $\alpha$ from $10^{-6}$ to~$10^{-2}$.
As shown in Figure~\ref{fig:LargeFFT}, we obtain two different error regimes: a smoothly decaying error for the relatively ``large'' initialization $10^{-2}$ and a slower but sharper decay that essentially stagnates after a given point, for the remaining ones.
Notice that the stagnation is smaller as the initialization gets closer to zero, as expected by our theory, and it corresponds to better support identification, while the initialization at $\alpha = 10^{-2}$ leads to a solution that is significantly different from the initial sparse ground truth.

\medskip

Our third experiment analyses the behavior of several algorithms for NNLS over a sequence of square problems with dimensions $500$, $1000$, $2000$ and $4000$.
In addition to our reparameterized GD algorithms, indicated as \textbf{GD} (for 2-layer gradient descent) and its accelerated versions \textbf{GD-BB} (with Barzilai-Borwein large-step), \textbf{GD-Nesterov} (accelerating GD, equation~\eqref{eq:update_Nest}) and \textbf{MD-Nesterov} (as in equation~\eqref{eq:update_ACC}, the discrete version of \eqref{accelerated-gd-continuous}), we include Lawson and Hanson's algorithm~\textbf{LH}, projected gradient descent~\textbf{PG} and accelerated projected gradient descent~\textbf{APG}.
For each matrix size, we selected 10 Gaussian matrices and a target vector with positive entries, and stored running time and final error, $\norm{Ax - b}{2}$ at $10\,000$ iterations.

In Figure~\ref{fig:square_timing}, we notice that the Lawson and Hanson algorithm does not scale as well as the others, essentially due to its need to solve larger and larger linear systems, whereas the others only rely on matrix-vector multiplication. Besides that, we emphasize once more that this algorithm, to our best knowledge, does not have asymptotic convergence rates, but it is only known that it stops after a finite number of iterations.

In Figure~\ref{fig:square_error}, we observe that the error that each method obtains after $10\,000$ iterations is relatively stable as the matrix size increases.
Lawson and Hanson's algorithm reaches an almost exact solution and is therefore not displayed to avoid distorting the axis.  Among the visible methods, \textbf{APG} achieves the lowest error (around $10^{-5}$), showing a significant advantage over other approaches. Our accelerated methods (GD-BB, GD-Nesterov, and MD-Nesterov) form a middle cluster with errors between O($10^{-4}$) and O($10^{-3}$), demonstrating the effectiveness of acceleration techniques in our reparameterized framework. As expected from our previous experiments, vanilla gradient descent converges more slowly, reaching only O($10^{-2}$) error within the same number of iterations.

While projected gradient methods like \textbf{PG} and \textbf{APG} achieve smaller regression errors for the same number of iterations, our experiments demonstrate that they are less effective for sparse recovery tasks. Our reparameterized GD methods excel in identifying the correct support of sparse solutions and show better stability when dealing with negative perturbations in the ground truth signal. Moreover, Figure Figure~\ref{fig:square_timing} confirms that our methods scale efficiently with problem size, making them particularly suitable for large-scale applications where both computational efficiency and support identification are important considerations.

\begin{figure}[ht]
    \centering
    \begin{subfigure}[b]{0.45\textwidth}
        \centering
        \includegraphics[width=1\textwidth]{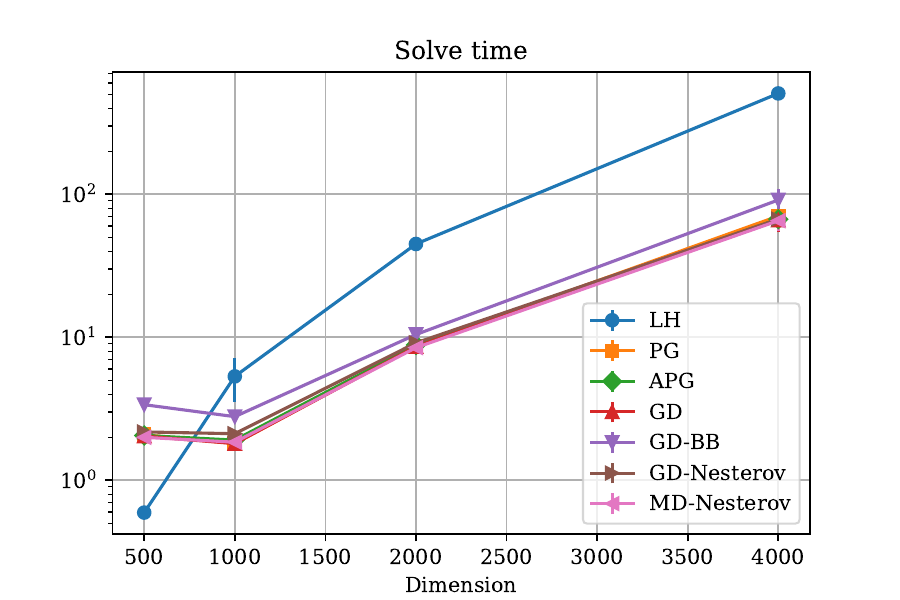}
	    \subcaption{Time to perform 10k iterations}
        \label{fig:square_timing}
    \end{subfigure}
    \hfill
    \begin{subfigure}[b]{0.45\textwidth}
        \centering
        \includegraphics[width=\textwidth]{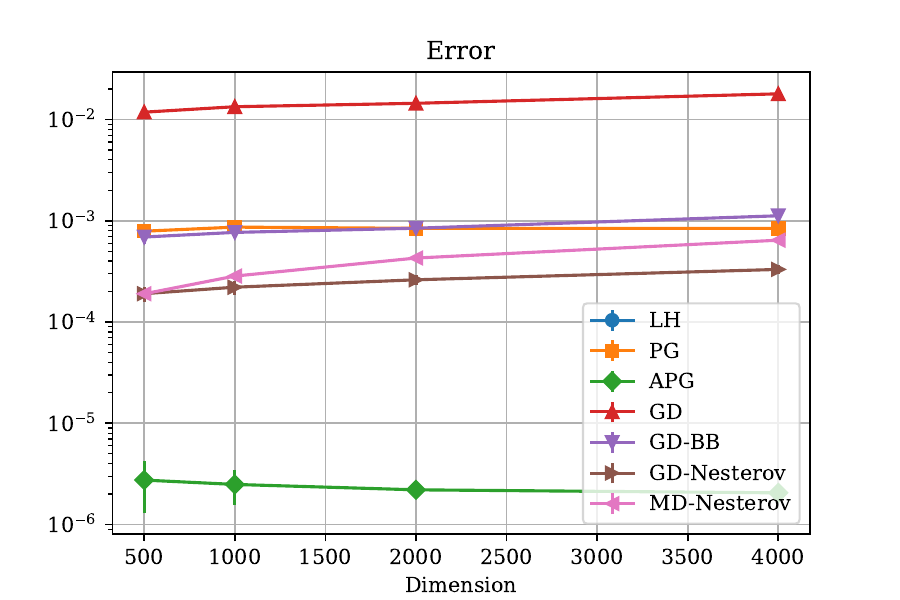}
        \subcaption{Error at 10k iterations}
        \label{fig:square_error}
    \end{subfigure}
    \caption{Timing and error behavior across matrix sizes for square NNLS problems, see Section \ref{sec:LargeScaleNNLS}.}
    \label{fig:square_scale}
\end{figure}

\subsection{Visual experiments: using NNLS for retrieving images}

In Figure \ref{fig:cifar10} we report the result of the CIFAR10 experiment in Section \ref{sec:NumericsStability}. These visualizations demonstrate how different NNLS methods perform when reconstructing grayscale CIFAR10 images corrupted with negative components. The images clearly illustrate the varying abilities of these algorithms to recover visual information under challenging conditions. Unlike the MNIST experiments, we observe that for CIFAR10 images (which typically have less distinct sparse structures), \textbf{GD-$3$L} provides the most accurate reconstructions while \textbf{SGD-$3$L} tends to induce excessive sparsity, particularly at higher corruption levels ($q = 0.5$).

\begin{figure*}[ht]
\captionsetup[subfigure]{justification=centering}
    \centering
    \begin{subfigure}[b]{0.19\textwidth}
        \centering
        \includegraphics[width = .5\textwidth]{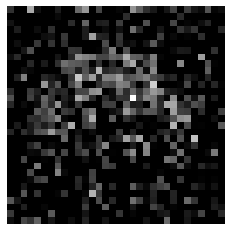}
        \subcaption*{HL-NNLS \\ ($q=0.5$)}
    \end{subfigure}
    \hfill
    \begin{subfigure}[b]{0.19\textwidth}
        \centering
        \includegraphics[width = .5\textwidth]{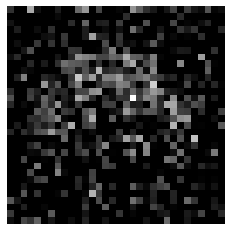}
        \subcaption*{TNT-NN \\ ($q=0.5$)}
    \end{subfigure}
    \hfill
    \begin{subfigure}[b]{0.19\textwidth}
        \centering
        \includegraphics[width = .5\textwidth]{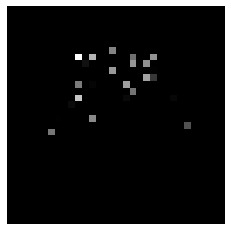}
        \subcaption*{SGD-3L \\ ($q=0.5$)}
    \end{subfigure}
    \hfill
    \begin{subfigure}[b]{0.19\textwidth}
        \centering
        \includegraphics[width = .5\textwidth]{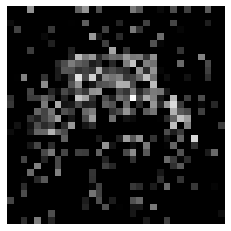}
        \subcaption*{GD-3L \\ ($q=0.5$)}
    \end{subfigure}
    \hfill
    \begin{subfigure}[b]{0.19\textwidth}
        \centering
        \includegraphics[width = .5\textwidth]{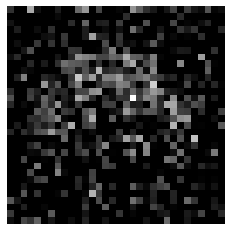}
        \subcaption*{PGD \\ ($q=0.5$)}
    \end{subfigure}\\
    \begin{subfigure}[b]{0.19\textwidth}
        \centering
        \includegraphics[width = .5\textwidth]{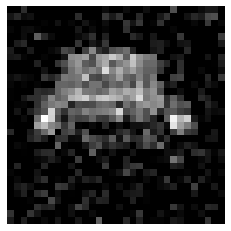}
        \subcaption*{HL-NNLS \\ ($q=0.05$)}
    \end{subfigure}
    \hfill
    \begin{subfigure}[b]{0.19\textwidth}
        \centering
        \includegraphics[width = .5\textwidth]{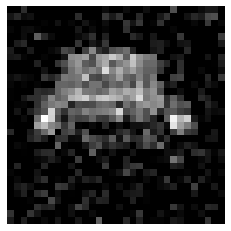}
        \subcaption*{TNT-NN \\ ($q=0.05$)}
    \end{subfigure}
    \hfill
    \begin{subfigure}[b]{0.19\textwidth}
        \centering
        \includegraphics[width = .5\textwidth]{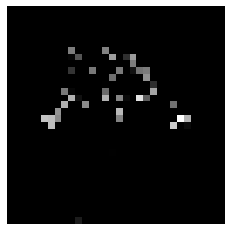}
        \subcaption*{SGD-3L \\ ($q=0.05$)}
    \end{subfigure}
    \hfill
    \begin{subfigure}[b]{0.19\textwidth}
        \centering
        \includegraphics[width = .5\textwidth]{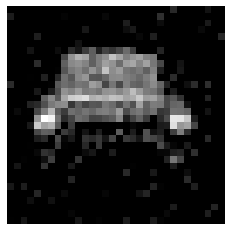}
        \subcaption*{GD-3L \\ ($q=0.05$)}
    \end{subfigure}
    \hfill
    \begin{subfigure}[b]{0.19\textwidth}
        \centering
        \includegraphics[width = .5\textwidth]{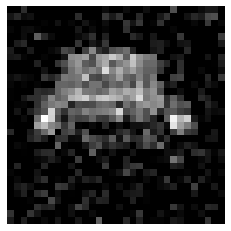}
        \subcaption*{PGD \\ ($q=0.05$)}
    \end{subfigure}
    \caption{Illustration of the CIFAR10 reconstruction, see Section \ref{sec:NumericsStability}.}
    \label{fig:cifar10}
\end{figure*}

\end{document}